\documentclass[11pt]{amsart}

\usepackage[T1]{fontenc}

\usepackage{amsmath}
\usepackage{mathtools}
\usepackage{amssymb,amsfonts}
\usepackage{mathrsfs}
\usepackage{bm}

\usepackage{graphicx}
\usepackage{float}
\usepackage{booktabs}
\usepackage{enumitem}
\usepackage{tikz}
\usepackage{tikz-cd}

\usepackage{xcolor}
\usepackage{aliascnt}
\usepackage[
colorlinks=true,
linkcolor=blue,
citecolor=blue,
urlcolor=blue
]{hyperref}
\usepackage[nameinlink,noabbrev]{cleveref}
\usepackage[margin=1.4in]{geometry}

\numberwithin{equation}{section}

\allowdisplaybreaks

\theoremstyle{plain}

\newtheorem{theorem}{Theorem}[section]

\newaliascnt{proposition}{theorem}
\newtheorem{proposition}[proposition]{Proposition}
\aliascntresetthe{proposition}

\newaliascnt{lemma}{theorem}
\newtheorem{lemma}[lemma]{Lemma}
\aliascntresetthe{lemma}

\newaliascnt{corollary}{theorem}
\newtheorem{corollary}[corollary]{Corollary}
\aliascntresetthe{corollary}

\newaliascnt{claim}{theorem}
\newtheorem{claim}[claim]{Claim}
\aliascntresetthe{claim}

\theoremstyle{definition}

\newaliascnt{definition}{theorem}
\newtheorem{definition}[definition]{Definition}
\aliascntresetthe{definition}

\newaliascnt{example}{theorem}
\newtheorem{example}[example]{Example}
\aliascntresetthe{example}

\newaliascnt{question}{theorem}

\aliascntresetthe{question}

\newaliascnt{assumption}{theorem}

\aliascntresetthe{assumption}

\theoremstyle{remark}

\newaliascnt{remark}{theorem}
\newtheorem{remark}[remark]{Remark}
\aliascntresetthe{remark}

\crefname{theorem}{theorem}{theorems}
\Crefname{theorem}{Theorem}{Theorems}

\crefname{proposition}{proposition}{propositions}
\Crefname{proposition}{Proposition}{Propositions}

\crefname{lemma}{lemma}{lemmas}
\Crefname{lemma}{Lemma}{Lemmas}

\crefname{corollary}{corollary}{corollaries}
\Crefname{corollary}{Corollary}{Corollaries}

\crefname{claim}{claim}{claims}
\Crefname{claim}{Claim}{Claims}

\crefname{definition}{definition}{definitions}
\Crefname{definition}{Definition}{Definitions}

\crefname{example}{example}{examples}
\Crefname{example}{Example}{Examples}

\crefname{question}{question}{questions}
\Crefname{question}{Question}{Questions}

\crefname{assumption}{assumption}{assumptions}
\Crefname{assumption}{Assumption}{Assumptions}

\crefname{remark}{remark}{remarks}
\Crefname{remark}{Remark}{Remarks}

\crefname{equation}{equation}{equations}
\Crefname{equation}{Equation}{Equations}

\newcommand{\R}{\mathbb{R}}

\newcommand{\Z}{\mathbb{Z}}
\newcommand{\C}{\mathbb{C}}

\newcommand{\eps}{\varepsilon}

\DeclareMathOperator{\Ric}{Ric}
\DeclareMathOperator{\Ind}{Ind}

\DeclareMathOperator{\Id}{Id}

\newcommand{\Hcal}{\mathcal H}
\newcommand{\Hzero}{\mathcal H_0}
\newcommand{\Kcal}{\mathcal K}

\newcommand{\Ecal}{\mathcal E}
\newcommand{\Vcal}{\mathcal V}
\newcommand{\Ical}{\mathcal I}
\newcommand{\Ucal}{\mathcal U}
\newcommand{\Wcal}{\mathcal W}
\newcommand{\Tcal}{\mathcal T}

\newcommand{\Qcal}{\mathcal Q}
\newcommand{\Bcal}{\mathcal B}

\newcommand{\Pscr}{\mathscr{P}}

\DeclareMathOperator{\Span}{span}

\DeclareMathOperator{\rank}{rank}

\newcommand{\dvol}{\,\mathrm{d}\mu}
\newcommand{\bangle}[1]{\langle #1 \rangle}

\title{Minimal hypersurfaces of Morse index one}

\author[Otis Chodosh]{Otis Chodosh}
\address{Department of Mathematics, Stanford University}
\email{ochodosh@stanford.edu}

\author[Matilde Gianocca]{Matilde Gianocca}
\address{Department of Mathematics, ETH Zürich}
\email{matilde.gianocca@math.ethz.ch}

\date{}

\begin{document}

\begin{abstract}
We prove that a complete, connected, embedded, minimal hypersurface in $\R^{n+1}$ with finite total curvature and Morse index one is the higher-dimensional catenoid. 
\end{abstract}

\maketitle

\section{Introduction}

Consider a complete embedded minimal hypersurface $M^n\subset \R^{n+1}$. For $u \in C^\infty_c(M)$, we write
\begin{equation}\label{eq:Q}
\Qcal(u,u) = \int_M |\nabla u|^2 - |A|^2 u^2
\end{equation}
for the second variation of area, where $A$ is the second fundamental form, and define the \emph{Morse index} by
\begin{equation}\label{eq:index}
\Ind(M) : = \sup\{ \dim U : U \subset C^\infty_c(M) , \Qcal(u,u) < 0 \textrm{ for all }u \in U\setminus\{0\}\}. 
\end{equation}
We say that $M$ has \emph{finite total curvature} if $\int_M |A|^n < \infty$. In this paper we prove 
\begin{theorem}\label{thm:main}
If $M^n\subset \R^{n+1}$ is a complete, connected, embedded, minimal hypersurface with finite total curvature and $\Ind(M) = 1$ then $M$ is a higher-dimensional catenoid. 
\end{theorem}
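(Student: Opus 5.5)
Since $M$ has finite total curvature, it is regular at infinity: outside a compact set $M$ is a finite union of ends, each a graph over the complement of a ball in a hyperplane. For $n\ge 3$ the graphing function has the expansion $u = a + b|x|^{2-n} + O(|x|^{1-n})$. The Gauss map extends conformally, in the $L^n$ sense, across the ends. I plan to follow the strategy used for $n=2$ by L\'opez--Ros and Osserman, but replace the Gauss-map degree with an index count built from Jacobi fields.

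\textbf{Step 1: two ends.} First I would show that index one forces exactly two ends. The index of $M$ is at least the dimension of the space of nonconstant, bounded (or decaying) harmonic functions on $M$, up to an additive constant. This is a variant of the Cao--Shen--Zhu argument: with $k$ ends there is a $(k-1)$-dimensional space of bounded harmonic functions with distinct limits at the ends. Testing the quadratic form $\mathcal Q$ on suitable combinations of these, or using the Schoen/Tysk-type estimate $\Ind(M)\ge \#\text{ends}-1$, gives $k\le 2$. If $k=1$, the monotonicity formula and regularity at infinity show that $M$ has density one at infinity, so $M$ is a hyperplane, which has index $0$. Hence $M$ has exactly two ends, both parallel to a common hyperplane with normal $e_{n+1}$ by embeddedness.

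\textbf{Step 2: conclude via Schoen's theorem.} Schoen's theorem says that a complete embedded minimal hypersurface with finite total curvature and two ends is a catenoid. It is proved by Alexandrov reflection, using the asymptotic expansions of the two ends, and holds in all dimensions $n\ge 3$ as well as $n=2$. It would therefore finish the proof once Step 1 is in place. If I wanted to avoid it, I would use the index directly instead. Translations and rotations give Jacobi fields $\langle \nu,e_i\rangle$ and $\langle \nu, x\times e_i\rangle$. Each nonzero $\langle\nu,v\rangle$ with $v\perp e_{n+1}$ decays like $|x|^{1-n}$ and changes sign. On the compactified surface the nodal domains of such a field give test functions whose $\mathcal Q$ vanishes, and a careful count yields $\Ind \ge 2$ unless there is rotational symmetry.

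\textbf{Main obstacle.} The hardest step is Step 1 in dimension $n\ge3$. The harmonic functions separating the ends are not themselves negative directions for $\mathcal Q$, so one must produce negative directions from them. I would first try the following construction. Take $f$ harmonic with $f\to \pm1$ on two ends, and use test functions $\phi f_i$ where $f_i$ is localized on one end. The $|\nabla|^2$ terms are controlled by capacity, which is finite for $n\ge 3$, while $\int |A|^2 f_i^2$ is strictly positive. Choosing the normalization so that the gradient term is dominated, for instance by exploiting the decay $|A|\sim |x|^{-n}$ against the capacity estimates, should produce $k-1$ negative directions, or $k$ once the constant function is included. This is where I expect genuine difficulty, and an appeal to the known bound $\Ind(M)\ge (\#\text{ends}-1)/(n+1)$-type estimates may be needed instead.
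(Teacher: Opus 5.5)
Your reduction matches the paper's: one end forces a hyperplane, and two ends force a catenoid by Schoen's theorem, so everything hinges on proving $k\le 2$. But Step~1 is the entire content of the theorem, and none of the mechanisms you propose supplies it. The inequality $\Ind(M)\ge k-1$ (or ``index at least the dimension of bounded harmonic functions modulo constants'') is not an available result for $n\ge 3$; if it were, the theorem would follow immediately from Schoen's theorem.

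Your test-function heuristic cannot prove it either. $\Qcal$ is homogeneous of degree two, so no normalization can make the gradient term ``dominated''; you need the genuine inequality $\int|\nabla u|^2<\int|A|^2u^2$. Moreover, on the graphical part of each end $\bangle{\nu,e_{n+1}}$ is a nonvanishing Jacobi field, so every test function supported there has $\Qcal\ge 0$.

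Nor can the constant function simply be added. For $n\ge 3$ it is not a limit of compactly supported functions in the norm $\int|\nabla f|^2+|A|^2f^2$, because the ends are non-parabolic. In fact the catenoid, with $k=2$, still has index one even when test functions constant near infinity are allowed, so ``$k$ negative directions'' is false.

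The Cao--Shen--Zhu test function $\phi|\nabla f|$ is nonlinear in $f$ and gives no linear space of negative directions. Its linearization, pairing $\nu\wedge\nabla f$ against a basis of $\Lambda^2\R^{n+1}$, is precisely the Ros--Savo--Li harmonic $1$-form method. That method must impose $\frac{n(n+1)}{2}\Ind(M)$ orthogonality conditions and so yields only $k\le\frac{n(n+5)}{2}$ when $\Ind(M)=1$. Even the fallback $\Ind(M)\ge (k-1)/(n+1)$ you mention would only give $k\le n+2$.

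The missing ideas are the three sharpenings of that method carried out in the paper.

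\emph{Enlarged space of forms.} The space is enlarged to $\Hcal$: harmonic forms asymptotic to $h_\alpha q^\top$ on each end $E_\alpha$, for a fixed $q$ in the asymptotic hyperplane. This gives $\dim\Hcal\ge b_1(M)+k+n-1$. It is admissible because, after subtracting $c_a(q)Z$ from $u^\omega_a$, the computation $\Qcal_\infty(Z,Z)=(n-1)\int_M|\nabla x^{n+1}|^2>0$ makes the trace $\sum_a\Qcal(\tilde u^\omega_a,\tilde u^\omega_a)$ nonpositive.

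\emph{Admissible pairing sets.} The basis of $\Lambda^2\R^{n+1}$ is replaced by an admissible pairing set $P_\Omega=\Id+K_\Omega\ge 0$ with $\Omega\in\Lambda^{n-3}\R^{n+1}$. The trace identity survives up to an exact term because $\nu\wedge\omega^\sharp$ is decomposable. $\Omega$ is chosen adapted to $M$ so that $P_\Omega\Tcal(\omega)=0$ for some nonzero $\omega$ in an $(n+1)$-dimensional $\Vcal\subset\Hcal$. This uses a Stiefel--Whitney argument on $\R P^{n}$ to find a balanced $2$-form in $\Tcal(\Vcal)$. Thus an $(n+1)$-dimensional space of forms suffices, rather than one of dimension exceeding $\frac{n(n+1)}{2}$.

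\emph{Global nullity argument.} An asymptotic argument along a single end shows that the forms in $\Hcal_0$ with $Lu^\omega_a=0$ for all $a$ are exactly the multiples of $dx^{n+1}$. This improves Li's local bound $2n-1$.

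With $k\ge 3$ one has $\dim\Hcal\ge n+2$, so one can pick $\Vcal$ avoiding $dx^{n+1}$ and reach a contradiction. Without these ingredients or a substitute, your Step~1 is an unproved assertion which, together with Schoen's theorem, is the theorem itself.
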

Cheng--Tysk \cite{ChengTysk1988} and L\'opez--Ros \cite{LopezRos1989} proved \Cref{thm:main} for $M^2\subset \R^3$. Recall that the higher-dimensional catenoid is the unique  (up to rigid motions and scaling) non-flat minimal surface of rotation. The fact that $\Ind(M) = 1$ for $M$ the higher-dimensional catenoid was proven by Tam--Zhou \cite{TamZhou2009}.

The hypothesis in \Cref{thm:main} can be weakened in various ways. Finite index is known to imply finite total curvature when $n+1 \leq 6$ by \cite{FC85,Tys89,CL21,CLMS24,Maz24} (cf.\ \Cref{theo:finite-index-to-ftc}).  \Cref{thm:main} may be extended to the immersed case for $M^3\to\R^4$, see \Cref{thm:R4-immersion}. We note that the work of L\'opez--Ros extends to classify $\Ind(M)=1$ immersions $M^2\to\R^3$ as the catenoid and Enneper's surface; in $\R^4$ there is no finite index analogue of Enneper's surface (compare with \cite{Choe1996}).

We refer to \cite{FakhiPacard,Coutant12,AryanMcWeeney26} for examples of minimal hypersurfaces in $\R^4$ (and higher dimensions) and to \cite{ChengZhou2009,HongLiWang2024,CMMR:splitting} for other stability-based characterizations of the catenoid. 

\subsection{The harmonic $1$-form method} Consider $M^n\subset \R^{n+1}$ a complete, embedded, minimal hypersurface with finite total curvature and $\Ind(M) = 1$. A well-known strategy to prove \Cref{thm:main} is to show that $M$ has two ends, which then implies that $M$ is a higher-dimensional catenoid by work of Schoen \cite{Schoen1983}.

To bound the number of ends of $M$ we use the harmonic $1$-form method, introduced (in this context) in the foundational work of Ros \cite{Ros2006} who proved a quantitative estimate for the genus of a minimal surface in $\R^3$ in terms of its index. This bound was later generalized by the first-named author and Maximo \cite{ChodoshMaximo2016,ChodoshMaximo2023} to include the number of ends $k$. For an embedded minimal surface with $k$ ends and genus $g$ in $M^2\subset \R^3$ their estimate gives 
\begin{equation}\label{eq:CM-R3}
\frac 13 (2g + 4k - 5)\leq \Ind(M)
\end{equation}
In particular, if $\Ind(M) = 1$, then \eqref{eq:CM-R3} implies that $k\leq 2$ so $M$ is a catenoid by \cite{Schoen1983}. This gave a new proof of the Cheng--Tysk/L\'opez--Ros \cite{ChengTysk1988,LopezRos1989} result.\footnote{Note that \eqref{eq:CM-R3} was also used in \cite{ChodoshMaximo2023} to prove the non-existence of embedded minimal surfaces in $\R^3$ of index two or three.} 

The Ros method was generalized to higher dimensions by Li \cite{Li2017} following earlier work of Savo \cite{Savo2010}. In particular, Li proved that if $M^n\subset\R^{n+1}$ is a complete embedded minimal hypersurface with finite total curvature with $k$ ends then
\begin{equation}\label{eq:Li-est}
\frac{2}{n(n+1)}(b_1(M) + k - 2n) \leq \Ind(M). 
\end{equation}
When $n+1\geq 5$, this estimate was proved assuming $M$ has a point with all principal curvatures distinct. Note that \eqref{eq:Li-est} implies that if $\Ind(M) = 1$ then $k \leq \frac{n(n+5)}{2}$, which gives e.g.\ $k\leq 12$ in $\R^4$. The proof of \eqref{eq:Li-est} relies on the following results:
\begin{enumerate}
\item The space of $L^2$ harmonic $1$-forms $\Hzero$ has $\dim \Hzero = b_1({M}) + k-1$. 
\item Let $u^\omega_{\Theta} : = \bangle{\nu \wedge \omega^\sharp,\Theta} $ for $\omega\in \Hzero$ and $\Theta \in \Lambda^2\R^{n+1}$. Then if $\Theta_1,\dots,\Theta_r$ be an orthonormal basis for $\Lambda^2\R^{n+1}$ (with $r = \frac{n(n+1)}{2}$) we have $\sum_{i=1}^r \Qcal(u^\omega_{\Theta_i},u^\omega_{\Theta_i}) = 0$. 
\item For $L = \Delta + |A|^2$, the space of $\omega\in \Hzero$ with $Lu^\omega_{\Theta_i} = 0$ for all $i\in\{1,\dots,r\}$ has dimension at most $2n - 1$ (assuming $M$ has a point with all principal curvatures distinct). 
\end{enumerate}
To prove \eqref{eq:Li-est}, we may combine (1) and (3) to find a $(b_1(M) + k - 2n )$-dimensional space of $L^2$-harmonic $1$-forms $\omega$ so that $u^\omega_{\Theta_i}\not \in \ker L$ for some $i\in\{1,\dots,r\}$. Using this, if $u^\omega_{\Theta_i}$ is orthogonal to the index of $M$ (this is $r \Ind(M)$ linear equations) then $\sum_{i=1}^r \Qcal(u^\omega_{\Theta_i},u^\omega_{\Theta_i}) > 0$. This contradicts (2), proving  \eqref{eq:Li-est}.

\subsection{Outline of the index characterization of the catenoid} To prove \Cref{thm:main} we follow this general strategy. However, to obtain the desired estimate $k\leq 2$ we must improve each of the steps (1)-(3) above. The improvement in step (1) was motivated by the (unexpected) observation that the higher-dimensional catenoid has index $1$ even if one enlarges the space of test functions to allow $u$ to take \emph{any} value constant outside of a compact set (not just $u \equiv 0$ outside of a compact set). This observation can be explained by the following general fact (for any $M^n\subset \R^{n+1}$ complete, embedded minimal hypersurface of finite total curvature). Let $Z = \bangle{x,\nu}$ denote the dilation Jacobi field. Regularity at infinity implies that $Z$ approaches a constant value along each end ($\pm$ the height of the end). After rotating so that all ends of $M$ are regular graphs over the $\{x^{n+1} = 0\}$ plane, we can prove that
\begin{equation}\label{eq:QZZ}
\Qcal_\infty(Z,Z) := \lim_{R\to\infty} \int_{M\cap B_R} |\nabla Z|^2 - |A|^2 Z^2 = (n-1) \int_M |\nabla x^{n+1}|^2 > 0
\end{equation}
(where the last inequality assumes $M$ is non-flat). Below, we  abuse notation and write $Q(u,u)$ whenever $|\nabla u|^2 + |A|^2 u^2 \in L^1(M)$. Using this, we find that if $\tilde u = u -Z$ has rapid decay at infinity, then $\Qcal_\infty(u,u) \geq \Qcal(\tilde u,\tilde u)$. This fact allows us to consider functions in stability that have (compatible) constant limiting behavior at infinity. 

By carefully tracking this argument, we may thus consider an enlarged space of $1$-forms with $\dim\Hcal = \dim \Hzero + n = b_1({M}) + k + n - 1$. One should note the crucial fact that $\dim \Hcal$ grows linearly with $n$.

\begin{remark}
For $M^2\subset\R^3$ Fischer-Colbrie  \cite{FC85} proved that the index of a two-sided minimal immersion $M^2\to\R^3$ is unchanged if one considers test functions that take \emph{any} constant value at infinity. 
\end{remark}
\begin{remark}
Note that for general Schrodinger operators it's certainly false that the index is unchanged if one considers functions that are constant (not decaying) at infinity. Here are two examples of this failure. First, one may consider a Schrodinger operator $\Delta+V$ on $\R^3$ with $V$ a small positive bump. This will be stable (using e.g.\ the Hardy inequality, cf.\ \cite[Example B.1]{CLS2026}), but the associated quadratic form has $Q(1,1) < 0$. Second, Eichmair--Koerber construct \cite[Theorem 1.2]{EichmairKoerber} minimal hypersurfaces in an asymptotically flat manifold that are stable with respect to functions that vanish at infinity but not those that are constant at infinity. 
\end{remark}

\begin{remark}
The idea of using harmonic forms that are not $L^2$-integrable was introduced by the first-named author and Maximo in \cite{ChodoshMaximo2016} for $M^2\subset \R^3$ based on weighted function spaces. The mechanism here is (necessarily) completely different based on $\Qcal_\infty(Z,Z) > 0$ rather than the non-geometric weighed spaces approach that works in $\R^3$ thanks to the conformal properties of $M^2$. 
\end{remark}

We now turn our improvement to step (2). The key observation is that it's possible to choose $\Theta_1,\dots,\Theta_r \in \Lambda^2\R^{n+1}$ so that $\sum_{i=1}^r Q(u^\omega_{\Theta_i},u^\omega_{\Theta_i}) = 0$ still holds, but where $r$ is considerably smaller than $\dim \Lambda^2\R^{n+1}$. 

For simplicity we now fix $n+1=4$. Let 
\begin{equation}\label{eq:sd-forms-basis}
\begin{split}
\Theta_1= e_1\wedge e_2 + e_3\wedge e_4\\
\Theta_2 = e_1\wedge e_3 + e_4\wedge e_2\\
\Theta_3 = e_1\wedge e_4 + e_2\wedge e_3
\end{split}
\end{equation}
be an orthogonal basis for the space of self-dual $2$-forms on $\R^4$. 

\begin{remark}
In higher dimensions the choice of forms $\Theta_1,\dots,\Theta_r$ is not canonical, but is instead determined by properties of $M$. 
\end{remark}

We let
\[
P_+ : \Lambda^2 \R^4 \to\Lambda^2\R^4, \qquad \eta \mapsto \frac 12 ( \eta + * \eta)
\]
and note that $2P_+ = \sum_{a=1}^3\Theta_a \otimes \Theta_a$. Take $u^\omega_\Theta = \bangle{\nu \wedge \omega^\sharp,\Theta}$. For $v_1,\dots,v_3$ a basis of $T_pM$ so that $v_1,\dots,v_3,\nu$ is an oriented orthonormal basis of $\R^4$, we have the following expression
\begin{equation}\label{eq:second-var-sum-self-dual}
\sum_{i=1}^3  \left( |\nabla u^\omega_{\Theta_i}|^2 - |A|^2 (u^\omega_{\Theta_i})^2 \right) \dvol  = \frac 12 \Delta |\omega|^2 \dvol - 2 \, d(\omega \wedge A \omega).
\end{equation}
See \Cref{lemm:sumQua-terms}. This expression is similar to the expression one obtains by summing over the standard basis of $\Lambda^2\R^{n+1}$ except for the final term, which vanishes after an integration by parts for the forms considered in the proof. 

Finally, we describe the improvement in step (3). In \cite{Li2017} this is based on a generalization of the observation of Ros \cite{Ros2006}. The equations $Lu^\omega_\Theta = 0$ and $\omega$ harmonic are overdetermined. As such, this places strong constraints on the geometry of $M$ and behavior of $\omega$ at any point. This allows one to prove that ``many'' of the possible such $\omega$ are locally in the span of the linear forms $dx^i$ (automatically harmonic by minimality). For example, for $M^3\subset\R^4$ this implies that there can be at most $5$ such forms locally. 

Since $dx^1,\dots,dx^3$ are not in $L^2$ but locally satisfy $Lu^\omega_\Theta = 0$, this means there are at most two $1$-forms in $\Hzero$ that must be avoided. One of these forms is $dx^4$ (unavoidable!), but this leaves open the possibility that there is another $\omega$ with $Lu^\omega_\Theta=0$.

\begin{remark}
The estimate from \cite[Proposition 5.1]{Li2017} cannot be improved using only local considerations. In fact, one may check that for $\mathcal{C}$ the cone over the Clifford torus (which has all principal curvatures distinct at each smooth point), the forms $dx^1,dx^2,dx^3,dx^4,r^{-2} dr$ (for $r$ the radial variable) all restrict to $\mathcal{C}$ to give $\omega$ that satisfy $Lu^\omega_\Theta = 0$ for any $\Theta \in \Lambda^2\R^4$. 
\end{remark}
As such, we must argue ``globally'' along an end, using in particular the behavior of $\omega \in \Hcal$ at infinity. See \Cref{prop:K-Omega}. 

Finally, we describe how to complete the argument when $M^3\subset \R^4$ has $\Ind(M) = 1$. The enlarged space of admissible forms has $\dim \Hcal = b_1({M}) + k + 2$. If $M$ is not the catenoid then $\dim \Hcal \geq 5$. After removing $dx^4$ we find a $4$-dimensional subspace $\Vcal \subset \Hcal$ so that if $\omega \in\Vcal\setminus\{0\}$ then $L u^\omega_{\Theta_i} \not \equiv 0$ for some $i\in\{1,2,3\}$. Orthogonality of $u^\omega_{\Theta_i}$ to the index represents $3$ equations, so there is $\omega \in \Vcal$ with $\sum_{i=1}^3 Q(u^\omega_{\Theta_i},u^\omega_{\Theta_i}) > 0$. On the other hand, integrating by parts using \eqref{eq:second-var-sum-self-dual} gives $\sum_{i=1}^3 Q(u^\omega_{\Theta_i},u^\omega_{\Theta_i}) = 0$, a contradiction.

\subsection{On the use of AI}
The idea to use self-dual $2$-forms (and the higher dimensional generalization as in \Cref{sec:admissible}) was suggested to us by ChatGPT 5.5 Pro. The article does not contain AI written text. 

\subsection{Acknowledgements} We are grateful to Ciprian Manolescu for discussions concerning \Cref{lemm:rank-2-RPn} and \Cref{coro:w1-sumvb}, Renato Bettiol for discussions concerning \Cref{sec:admissible}, as well as to Chao Li and Davi Maximo for their interest in this work.  O.C.\ was partially supported by a Terman Fellowship and an
NSF grant (DMS-2304432). M.G. thanks the Stanford math department for its hospitality during the completion of this work.

\subsection{Organization}
In \Cref{sec:conv} we fix certain conventions used in the paper. In \Cref{sec:prelim} we establish asymptotic expansions of geometric objects and the existence of the bound state in an appropriate function space. In \Cref{sec:jac} we study the dilation Jacobi field and in \Cref{sec:harmonic-1-forms} we construct a larger class of harmonic $1$-forms. In \Cref{sec:admissible} we define the notion of admissible pairings and prove their existence in certain cases. In \Cref{sec:trace} we compute the trace of the form-based test functions defined by an admissible pairing and in \Cref{sec:nullity} we study the nullity case. We then prove \Cref{thm:main} in \Cref{sec:proof-main} and discuss generalizations in \Cref{sec:gen}. We recall facts about minimal hypersurfaces of finite total curvature in \Cref{app:ftc} and facts about harmonic functions on asymptotically flat manifolds in \Cref{app:harm}. 

\section{Conventions}\label{sec:conv}
\subsubsection{Forms} We write $e_1,\dots,e_N$ for the standard basis of $\R^N$ (recall $N = n+1$). We will use the standard inner product for elements of $\Lambda^2\R^{N}$ associated to the norm
\begin{equation}\label{eq:inner-prod-forms}
\left\| \sum_{i<j} \alpha^{ij} e_i \wedge e_j \right\|^2 = \sum_{i<j} (\alpha^{ij})^2.
\end{equation}
Note that this yields an identification of $\Lambda^2\R^N$ with $(\Lambda^2\R^N)^*$.

For $\Theta \in \Lambda^2\R^N$ we'll denote by $\Theta\otimes\Theta$ the rank-one operator $(\Theta\otimes \Theta)(\eta) = \bangle{\eta,\Theta}\Theta$.

\subsubsection{Connection and second fundamental form }For a hypersurface $M \subset \R^N$ with unit normal $\nu$ write $D_X \nu = - A X$ for $A$ the shape operator. We also denote the scalar second fundamental form by $A(X,Y) = \bangle{AX,Y}$. We write $D$ for the Euclidean connection and $\nabla$ for the induced connection on $M$. In particular, for $X,Y$ vector fields tangent to $M$ we have
\begin{equation}\label{eq:D-nabla}
D_X Y = \nabla_X Y + A(X,Y) \nu .
\end{equation}

\subsubsection{Extended stability operator} We write
\[
\Qcal_R(u,v) = \int_{M\cap B_R} \bangle{\nabla u,\nabla v} - |A|^2 uv 
\]
and set 
\begin{equation}\label{eq:Qcal-infty}
\Qcal_\infty(u,v) : = \lim_{R\to\infty} \Qcal_R(u,v)
\end{equation}
 when the limit exists. 

\section{Preliminaries} \label{sec:prelim}

Let $N = n+1\geq 4$ and consider $M^n\subset \R^N$ complete, embedded minimal hypersurface with finite total curvature and $\Ind(M) = 1$.

\subsection{Ends} By \Cref{thm:reg-infty}, $M$ has finitely many ends $E_1,\dots,E_k$, each of which is regular at infinity in the sense of \Cref{def:reg-infty}. Since $M$ is embedded, we may assume that each $E_\alpha$ is an outer graph\footnote{More precisely, there's $K\subset \Pi$ compact and $v_\alpha : \Pi \setminus K \to \R$ so that $E_\alpha$ is the graph of $v_\alpha$ over $\Pi\setminus K$. } over the fixed plane $\Pi : = \{x^N = 0\}$ satisfying
\[
D^\ell \left( v_\alpha(y) -  h_\alpha - a_\alpha |y|^{2-n} \right) =  O(|y|^{1-n-\ell})
\]

Below we will write $r=|x|$ for the (ambient) radial variable and $y$ for the $\Pi$ coordinate.

\begin{remark}
When $M^n \to \R^N$ is a complete minimal immersion with finite total curvature, the same thing holds except that the planes $\Pi_\alpha$ over which the ends are graphical will vary. 
\end{remark}

We record the following geometric consequences of regularity at infinity. 

\begin{lemma}\label{lemm:vol-grow}
$|M\cap B_R| = O(R^n), |M\cap\partial B_R| = O(R^{n-1})$.
\end{lemma}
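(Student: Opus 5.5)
We split $M$ into a compact core and its finitely many ends. Regularity at infinity (\Cref{thm:reg-infty}), together with the graphical description above, gives a compact set $K_0\subset M$ and radius $R_0$ such that $M\setminus K_0 = E_1\cup\dots\cup E_k$. Each $E_\alpha$ is the graph of $v_\alpha$ over $\Pi\setminus K$, with $v_\alpha$ bounded and $|Dv_\alpha| = O(|y|^{1-n})\to 0$. Since $K_0$ is compact, it contributes $O(1)$ to both $|M\cap B_R|$ and $|M\cap \partial B_R|$. It therefore suffices to prove both bounds for a single end and then sum over the $k$ ends.

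\textbf{Volume bound.} On $E_\alpha$ we use the graph parametrization $y\mapsto (y,v_\alpha(y))$. Its area element is $\sqrt{1+|Dv_\alpha|^2}\,dy\le 2\,dy$ once $|y|$ is large. If $(y,v_\alpha(y))\in B_R$, then $|y|\le R$. Hence
\[
|E_\alpha\cap B_R|\le 2\,|\{y\in\Pi : |y|\le R\}| = O(R^n).
\]

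\textbf{Sphere bound.} This is the only step needing care, since we must compute an $(n-1)$-dimensional measure of a level set. Consider the function $r = |x|$ restricted to $E_\alpha$. In graph coordinates $r^2 = |y|^2 + v_\alpha(y)^2$, so
\[
\nabla_\Pi (r^2) = 2y + 2v_\alpha Dv_\alpha = 2y + O(|y|^{1-n}).
\]
This shows that $r$ has no critical points on $E_\alpha$ for $|y|$ large, and that each ray from the origin in $\Pi$ crosses the level set $\{r=R\}$ exactly once. Equivalently, $E_\alpha\cap\partial B_R$ is the graph over $S^{n-1}$ of a radial function
\[
\rho_R(\theta) = \sqrt{R^2 - v_\alpha^2} + o(1), \qquad \text{with} \qquad |\partial_\theta\rho_R| = O(R\cdot R^{1-n}\cdot R^{2-n}) = o(R)
\]
(the angular derivative comes from differentiating $v_\alpha^2$). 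The image of this curve-of-spheres in $\R^N$ is then a Lipschitz graph over the round sphere of radius $\approx R$ in $\Pi$, with uniformly bounded Lipschitz constant, including the bounded $x^N$-component. Its $(n-1)$-measure is therefore at most $C R^{n-1}$.

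\textbf{Alternative for the sphere bound.} One can instead use the coarea formula. Since $|\nabla^M r|\to 1$ on the ends, because the tangent planes converge to $\Pi$ and $x/|x|$ becomes nearly tangent, we have
\[
\frac{d}{dR}\int_{M\cap B_R}|\nabla^M r| = |M\cap\partial B_R|,
\]
and the graphical description makes the left side $\omega_{n-1}R^{n-1}k + o(R^{n-1})$. Either way, the point requiring care is checking transversality of $\partial B_R$ to the ends for large $R$. This follows directly from $\nabla^M r = x^T/|x|$ and the decay $v_\alpha = h_\alpha + O(|y|^{2-n})$, $Dv_\alpha = O(|y|^{1-n})$.
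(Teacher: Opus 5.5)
Your proof is correct and is the argument the paper intends: the paper states this lemma without proof, as an immediate consequence of the graphical description of the ends ($v_\alpha$ bounded, $Dv_\alpha = O(|y|^{1-n})$), and your area-element bound together with the radial-graph description of $E_\alpha\cap\partial B_R$ fills in those details. There is one harmless slip. When $h_\alpha\neq 0$ you only have $v_\alpha = O(1)$, not $O(R^{2-n})$, so the correct estimate is $|\partial_\theta\rho_R| = O(R^{1-n})$ rather than the bound you wrote; this is still $o(R)$, so the conclusion is unchanged.
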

\begin{lemma}\label{lemm:unit-normal-lim}
There's $\sigma_\alpha \in\{\pm 1\}$ so that $D^\ell(\nu - \sigma_\alpha (e_N-D v_\alpha(y)))  = O(r^{2-2n-\ell})$ along $E_\alpha$. 
\end{lemma}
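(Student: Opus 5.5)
The plan is to compute $\nu$ explicitly from the graph representation and then read off the decay from the expansion of $v_\alpha$. Along $E_\alpha$, parametrize by $y \in \Pi\setminus K$ via $F(y) = (y, v_\alpha(y))$. The unit normal to a graph is
\[
\nu = \sigma_\alpha \frac{e_N - Dv_\alpha(y)}{\sqrt{1+|Dv_\alpha|^2}},
\]
where $Dv_\alpha$ is viewed as a horizontal vector in $\R^N$. The sign $\sigma_\alpha\in\{\pm1\}$ is constant on each end, because $E_\alpha$ is connected and $\nu$ is continuous. Hence
\[
\nu - \sigma_\alpha(e_N - Dv_\alpha) = \sigma_\alpha (e_N - Dv_\alpha)\bigl((1+|Dv_\alpha|^2)^{-1/2} - 1\bigr).
\]

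Next I would invoke the expansion stated just above. It gives $D^\ell v_\alpha = O(|y|^{1-n-\ell})$ for all $\ell\ge1$: the term $a_\alpha|y|^{2-n}$ has $\ell$-th derivatives of order $|y|^{2-n-\ell}$ for $\ell\geq1$, and the remainder is smaller. Set $f(t) = (1+t)^{-1/2}-1$, which is smooth near $0$ with $f(t) = O(t)$. With $t = |Dv_\alpha|^2 = O(|y|^{2-2n})$, the chain/Leibniz rule gives $D^\ell(|Dv_\alpha|^2) = O(|y|^{2-2n-\ell})$. By Faà di Bruno, $D^\ell f(|Dv_\alpha|^2) = O(|y|^{2-2n-\ell})$: every term contains at least one factor that is a derivative of $|Dv_\alpha|^2$, and the remaining factors are bounded. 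Multiplying by $e_N - Dv_\alpha$, whose derivatives of order $j\ge1$ are $O(|y|^{-n-j})$ and which is itself bounded, keeps the bound $O(|y|^{2-2n-\ell})$.

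Finally, I would convert from $y$ to $r$ and from derivatives in $y$ to derivatives along $M$ or in ambient coordinates. On $E_\alpha$ we have $r^2 = |y|^2 + v_\alpha(y)^2$ and $v_\alpha$ is bounded, so $r\sim|y|$. The parametrization $F$ has $DF = (\Id, Dv_\alpha)$ with bounded derivatives, and its inverse (the projection) is smooth. Derivatives along $E_\alpha$, whether covariant via $\nabla$ or coordinate derivatives in $y$, therefore differ by bounded coefficients with decaying derivatives, and this preserves the orders. The main (mild) obstacle is the bookkeeping in this last step. The induced metric $g_{ij} = \delta_{ij} + \partial_iv_\alpha\partial_jv_\alpha$ has Christoffel symbols $O(|y|^{-2n})$, so $\nabla^\ell$ and $D_y^\ell$ agree up to lower-order terms carrying extra decay. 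The stated estimate follows.
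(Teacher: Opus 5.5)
Your argument is correct, and it is the direct computation from the graph formula $\nu = \sigma_\alpha (e_N - Dv_\alpha)/\sqrt{1+|Dv_\alpha|^2}$ that the paper leaves implicit (the paper records this lemma as a consequence of regularity at infinity and gives no proof). There are a few slips, none of which affects the conclusion:
\begin{itemize}
\item When $a_\alpha \neq 0$, the expansion only gives $D^\ell v_\alpha = O(|y|^{2-n-\ell})$ for $\ell \geq 1$, not $O(|y|^{1-n-\ell})$. The correct rate is the one your bound $|Dv_\alpha|^2 = O(|y|^{2-2n})$ actually uses.
\item Consequently, the derivatives of $e_N - Dv_\alpha$ and the Christoffel symbols are one power of $|y|$ worse than you wrote. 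They still decay fast enough, so the estimate survives.
\item In the chain-rule step, it is not enough that the extra factors are bounded. You need each factor $D^{\ell_i}|Dv_\alpha|^2$ to decay like $|y|^{2-2n-\ell_i}$ to obtain the full $|y|^{-\ell}$ gain.
\end{itemize}
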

\begin{lemma}\label{lemm:sff-decay}
The second fundamental form satisfies $|A| = O(r^{-n})$. 
\end{lemma}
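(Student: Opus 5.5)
The claim is that $|A| = O(r^{-n})$. Since $M$ is the union of a compact piece and the finitely many ends $E_1,\dots,E_k$, it suffices to prove the bound on each end. On a compact set $|A|$ is bounded and $r$ is bounded, so any bound there is harmless. Fix an end $E_\alpha$, written as the graph of $v_\alpha$ over $\Pi\setminus K$. We will compute $A$ from $v_\alpha$ using the expansion recorded at the start of this subsection.

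First, differentiate the expansion. It gives $D v_\alpha = O(|y|^{1-n})$ and $D^2 v_\alpha = O(|y|^{-n})$. Indeed, $D^2(h_\alpha + a_\alpha|y|^{2-n}) = a_\alpha D^2|y|^{2-n} = O(|y|^{-n})$, and the error term contributes $O(|y|^{1-n-2}) = O(|y|^{-1-n})$. Both bounds use the case $\ell = 1,2$ of the expansion.

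Next, use the standard formula for the second fundamental form of a graph:
\[
A_{ij} = \pm\frac{\partial_i\partial_j v_\alpha}{\sqrt{1+|Dv_\alpha|^2}}
\]
in the coordinates $y$. The induced metric is $g_{ij} = \delta_{ij} + \partial_i v_\alpha\,\partial_j v_\alpha = \delta_{ij} + O(|y|^{2-2n})$, which is uniformly equivalent to $\delta$ for $|y|$ large. Hence $|A|_g \le C\,|D^2 v_\alpha| = O(|y|^{-n})$. Alternatively, one can read this off directly from \Cref{lemm:unit-normal-lim} with $\ell = 1$. Since $D_X\nu = -AX$, we get $|A| \le C|D\nu|$ along the end. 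Differentiating $\sigma_\alpha(e_N - Dv_\alpha)$ gives $O(|D^2 v_\alpha|) = O(r^{-n})$, with an error of $O(r^{1-2n})$, which is smaller.

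Finally, convert from $|y|$ to $r$. On $E_\alpha$ we have $r^2 = |y|^2 + v_\alpha(y)^2$ and $v_\alpha = h_\alpha + O(|y|^{2-n})$ is bounded. So $r \sim |y|$ as $|y|\to\infty$, and $O(|y|^{-n}) = O(r^{-n})$.

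There is no real obstacle here. The only points needing care are that the error term in the expansion is differentiated twice (which is allowed, since the expansion holds for all $\ell$), and that the metric comparison between $g$ and $\delta$ is uniform on each end.
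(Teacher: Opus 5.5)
Your proposal is correct. It matches what the paper intends: the paper states this lemma without proof, as a direct consequence of regularity at infinity. Your argument supplies exactly that — $D^2 v_\alpha = O(|y|^{-n})$ from the graphical expansion, the graph formula for $A$ with an induced metric uniformly equivalent to the flat one, and $r \sim |y|$ along each end.
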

\begin{lemma}\label{lemm:conormal}
For $R$ sufficiently large so that $\partial B_R$ intersects $E_\alpha$ transversely, let $\vartheta$ be the outwards pointing conormal to $M\cap B_R$. Then $\vartheta = \frac{y}{|y|} + O(r^{1-n})$ along $E_\alpha \cap\partial B_R$.
\end{lemma}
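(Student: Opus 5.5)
The plan is to work one end at a time: on $\partial B_R$ the hypersurface is a finite union of graphical pieces, so it suffices to fix an end $E_\alpha$ and compute the conormal of $M\cap B_R$ along $E_\alpha\cap\partial B_R$ in graph coordinates. We write $E_\alpha$ as $x=(y,v_\alpha(y))$ with $y\in\Pi\setminus K$, and use the expansion $v_\alpha = h_\alpha + a_\alpha|y|^{2-n} + O(|y|^{1-n})$ together with $Dv_\alpha = O(|y|^{1-n})$. Since $r^2=|y|^2+v_\alpha^2$ and $v_\alpha$ is bounded, we have $r = |y| + O(|y|^{-1})$. Hence $r$ and $|y|$ are comparable and we may freely switch between $O(r^{-m})$ and $O(|y|^{-m})$.

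Next, $\vartheta$ is the unit vector tangent to $M$, normal to $M\cap\partial B_R$ inside $M$, and pointing outward. Equivalently, $\vartheta$ is the normalized tangential projection of the radial field:
\[
\vartheta = \frac{x^\top}{|x^\top|}, \qquad x^\top = x - \bangle{x,\nu}\nu .
\]
This formula holds because $M\cap\partial B_R$ is the level set $\{r=R\}$ in $M$, so $\nabla^M r = x^\top/r$ is normal to it within $M$ and points outward. Transversality guarantees $x^\top\neq 0$.

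We now estimate $x^\top$. By \Cref{lemm:unit-normal-lim}, $\nu = \sigma_\alpha e_N + O(r^{1-n})$. Hence
\[
\bangle{x,\nu} = \sigma_\alpha v_\alpha + \bangle{(y,0),O(r^{1-n})} = O(1) + O(r^{2-n}) = O(1).
\]
(In fact this is the dilation field $Z$, tending to $\pm h_\alpha$.) Therefore
\[
x^\top = (y,v_\alpha) - \bangle{x,\nu}\bigl(\sigma_\alpha e_N + O(r^{1-n})\bigr) = (y,0) + \bigl(v_\alpha - \bangle{x,\nu}\sigma_\alpha\bigr)e_N + O(r^{1-n}).
\]
Next we check the $e_N$ coefficient. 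It equals $v_\alpha - v_\alpha + O(r^{2-n})$, using $\sigma_\alpha^2=1$ and the estimate on $\bangle{x,\nu}$ above. So $x^\top = (y,0) + O(r^{2-n})$, and consequently $|x^\top| = |y| + O(r^{2-n})$.

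Dividing gives
\[
\vartheta = \frac{(y,0)}{|y|} + O(r^{1-n}),
\]
since $O(r^{2-n})/|y| = O(r^{1-n})$. This is the claim, with $y/|y|$ regarded as a vector in $\Pi\subset\R^N$.

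The only delicate step is the cancellation in the $e_N$ component. A crude bound $\bangle{x,\nu}=O(1)$ alone would leave an $O(1)$ error there. The cancellation must therefore be done precisely, using the full expansion $\nu=\sigma_\alpha(e_N - Dv_\alpha)/\sqrt{1+|Dv_\alpha|^2}$. Then $\bangle{x,\nu}\sigma_\alpha = v_\alpha - \bangle{y,Dv_\alpha} + O(r^{2-2n}\cdot r)$, where $\bangle{y,Dv_\alpha}=O(r^{2-n})$. Everything else is routine bookkeeping with the decay from \Cref{lemm:unit-normal-lim}.
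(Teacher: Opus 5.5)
Your proof is correct. The paper states this lemma without proof, as a routine consequence of regularity at infinity, and your computation is exactly the check it leaves implicit: $\vartheta = x^\top/|x^\top|$ via $\nabla^M r = x^\top/r$, with $x^\top = (y,0) + O(r^{2-n})$. One minor remark: the full expansion of $\nu$ in your last paragraph is not needed, since $\nu = \sigma_\alpha e_N + O(r^{1-n})$ already gives $\sigma_\alpha\langle x,\nu\rangle = v_\alpha + O(r^{2-n})$, and that is the cancellation you actually use.
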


\subsection{Index decomposition} 

We write $L = \Delta + |A|^2$ for the second variation operator on $M$. Observe that 
\begin{equation}\label{eq:Bcal-def}
\|f\|_\Bcal^2 : = \int_M |\nabla f|^2 + |A|^2 f^2
\end{equation}
defines a non-degenerate inner product on $C^\infty_c(M)$. Let $\Bcal \subset W^{1,2}_\textrm{loc}(M)$ denote the Hilbert space completion of $C^\infty_c(M)$ with respect to this inner product. 

\begin{lemma}\label{lemm:f-in-B-from-decay}
If $f \in C^\infty_\textnormal{loc}(M)$ has $f = O(r^{2-n})$ and $\nabla f = O(r^{1-n})$ then $f \in \Bcal$. 
\end{lemma}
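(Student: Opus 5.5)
We will approximate $f$ by the cutoffs $f_j = \eta_j f$ and show that these form a Cauchy sequence in the $\Bcal$-norm with limit $f$. Here $\eta_j(x) = \eta(|x|/R_j)$ with $R_j\to\infty$, $\eta\equiv 1$ on $[0,1]$, $\eta \equiv 0$ on $[2,\infty)$ and $|\eta'|\le 2$. Each $f_j$ is compactly supported and lies in $W^{1,2}$. Any compactly supported $W^{1,2}$ function lies in $\Bcal$: on a compact set the $\Bcal$-norm is dominated by the $W^{1,2}$-norm, because $|A|$ is bounded there, so mollifying gives approximants in $C^\infty_c(M)$.

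First we check that $\|f\|_\Bcal<\infty$, using \Cref{lemm:vol-grow} and \Cref{lemm:sff-decay}. On the annulus $M\cap (B_{2^{m+1}}\setminus B_{2^m})$ we have $|\nabla f|^2 = O(2^{2m(1-n)})$ and $|A|^2f^2 = O(2^{-2mn}2^{2m(2-n)})$. The annulus has volume $O(2^{mn})$. The contribution of the annulus is therefore $O(2^{m(2-n)})$, which is summable since $n\ge 3$. Consequently $\int_{M\setminus B_R} |\nabla f|^2 + |A|^2 f^2 \to 0$ as $R\to\infty$.

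Next we control the difference $f - f_j = (1-\eta_j) f$. Its gradient is $(1-\eta_j)\nabla f - f\nabla\eta_j$. The first term has $L^2$-norm bounded by the tail $\int_{M\setminus B_{R_j}}|\nabla f|^2 \to 0$. The second term is supported in $B_{2R_j}\setminus B_{R_j}$ and satisfies $|f\nabla \eta_j| = O(R_j^{2-n}R_j^{-1})$. Using \Cref{lemm:vol-grow} again,
\[
\int |f\nabla\eta_j|^2 = O(R_j^{2-2n}R_j^n) = O(R_j^{2-n})\to 0 .
\]
The potential term $\int |A|^2(1-\eta_j)^2f^2$ is also bounded by a tail and so tends to $0$.

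This shows that $\|f_j - f_k\|_\Bcal$ and $\|f-f_j\|_\Bcal$ tend to zero, where the latter expression makes sense because $\|f\|_\Bcal$ is finite. Hence $(f_j)$ is Cauchy and converges in $\Bcal$ to some $g$.

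It remains to identify $g$ with $f$. Since $\Bcal\subset W^{1,2}_{\mathrm{loc}}$, convergence in $\Bcal$ implies local $W^{1,2}$ convergence on compact sets. This is the one point that needs care: on a compact set $K$ we need a Poincaré-type bound $\|u\|_{L^2(K)}\le C_K\|u\|_\Bcal$. This follows from non-degeneracy of $\Bcal$ together with $|A|\not\equiv 0$ on $M$ and connectedness of $M$; it is implicit in the statement that $\Bcal\subset W^{1,2}_{\mathrm{loc}}$. Since $f_j = f$ on $B_{R_j}$, the local limit is $f$, so $g = f$ and $f\in\Bcal$.

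There is no real obstacle beyond this embedding. The decay exponents are exactly what is needed, because $n\ge 3$ makes $R^{2-n}\to 0$.
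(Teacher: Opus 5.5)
Your proof is correct and follows the paper's argument: cut off at scale $R$ and use \Cref{lemm:vol-grow,lemm:sff-decay} to show that $\|f\|_\Bcal<\infty$ and $\|(1-\chi_R)f\|_\Bcal\to 0$. You also spell out the identification of the $\Bcal$-limit with $f$ via $\Bcal\subset W^{1,2}_{\mathrm{loc}}(M)$, which the paper leaves implicit; note that $\eta_j f$ already lies in $C^\infty_c(M)$, so the mollification detour is unnecessary.
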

\begin{proof}
 Choose  a smooth cutoff $\chi_R$ between $B_R$ and $B_{2R}$ with $|\nabla \chi_R|\leq CR^{-1}$. \Cref{lemm:vol-grow,lemm:sff-decay} imply that $\|f\|_\Bcal < \infty$ and $\|(1-\chi_R)f\|_\Bcal \to 0$ as $R\to\infty$. This completes the proof. 
\end{proof}

\begin{lemma}\label{lemm:index-eig}
There exists $\varphi \in W^{1,2}(M) \cap C^\infty_\textnormal{loc}(M) \subset \Bcal$ and $\lambda<0$ so that $L\varphi + \lambda \varphi = 0$ and $\|\varphi\|_{L^2(M)} = 1$. Moreover, if $f \in \Bcal$ has $\Qcal(f,\varphi) = 0$ then $\Qcal(f,f) \geq 0$ with equality if and only if $Lf =0$.
\end{lemma}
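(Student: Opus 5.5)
The plan is to build $\varphi$ as a ground state of $L$, i.e.\ a minimizer of the Rayleigh quotient, and then derive the second assertion from $\Ind(M)=1$ by a two-dimensional negative subspace argument. Throughout we use $\Qcal(u,u) = -\int_M uLu$ for compactly supported $u$.

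\textbf{Construction of $\varphi$.} Set
\[
\lambda := \inf\Big\{\Qcal(u,u) : u\in C^\infty_c(M),\ \|u\|_{L^2}=1\Big\}.
\]
By \Cref{lemm:sff-decay}, $|A|^2$ is bounded, so $\lambda \geq -\sup|A|^2 > -\infty$. Since $\Ind(M)=1\ge 1$, some compactly supported $u$ has $\Qcal(u,u)<0$, so $\lambda<0$.

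To realize the infimum, let $\lambda_R$ be the first Dirichlet eigenvalue of $-L$ on $M\cap B_R$, with positive normalized eigenfunction $\varphi_R$ (extended by zero). Then $\lambda_R \downarrow \lambda$, so $\lambda_R\le \lambda/2<0$ for $R$ large. The bound $\int|\nabla \varphi_R|^2 = \lambda_R + \int|A|^2\varphi_R^2 \le C$ shows that $\varphi_R$ is bounded in $W^{1,2}$. Interior elliptic estimates then give a subsequence converging in $C^\infty_{\rm loc}$ and weakly in $W^{1,2}$ to some $\varphi\ge 0$ solving $L\varphi+\lambda\varphi=0$.

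\textbf{No loss of mass (main obstacle).} The delicate point is showing that the limit is nontrivial. Fix a compact set $K$ such that $|A|^2\le |\lambda|/4$ on $M\setminus K$, which is possible by \Cref{lemm:sff-decay}. Then
\[
\tfrac{\lambda}{2}\ \ge\ \lambda_R\ \ge\ -\int_K |A|^2\varphi_R^2 \;-\; \tfrac{|\lambda|}{4}.
\]
This gives $\int_K\varphi_R^2 \ge c>0$ uniformly in $R$. Local $L^2$ convergence therefore yields $\varphi\not\equiv 0$.

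We next upgrade to $\|\varphi\|_{L^2}=1$. Lower semicontinuity gives $\|\varphi\|_{L^2}\le 1$. Integrating the equation against $\varphi$ with cutoffs (valid since $\varphi\in W^{1,2}$) shows $\Qcal(\varphi,\varphi)=\lambda\|\varphi\|_{L^2}^2$. Applying the same argument to the normalized function, the definition of $\lambda$ as an infimum is consistent with this. Finally, rescale so that $\|\varphi\|_{L^2}=1$. The inclusion $W^{1,2}\subset\Bcal$ follows because $|A|$ is bounded and cutoffs $\chi_R\varphi$ converge to $\varphi$ in the $\Bcal$-norm.

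\textbf{Second assertion.} First note that $|\Qcal(u,v)|\le\|u\|_\Bcal\|v\|_\Bcal$, so $\Qcal$ extends continuously to $\Bcal$. Also $\Qcal(\varphi,\varphi)=\lambda<0$.

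Suppose $f\in\Bcal$ satisfies $\Qcal(f,\varphi)=0$ and $\Qcal(f,f)<0$. Then $f$ is not a multiple of $\varphi$: indeed $\Qcal(c\varphi,\varphi)=c\lambda$ vanishes only if $c=0$. On $\Span\{\varphi,f\}$ we have
\[
\Qcal(a\varphi+bf,\,a\varphi+bf)=a^2\lambda+b^2\Qcal(f,f),
\]
which is negative definite. Approximate $\varphi$ and $f$ in $\Bcal$ by elements of $C^\infty_c(M)$. Both negative definiteness and linear independence are open conditions under this approximation, so we obtain a $2$-dimensional subspace of $C^\infty_c(M)$ on which $\Qcal$ is negative definite. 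This contradicts $\Ind(M)=1$, so $\Qcal(f,f)\ge 0$.

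\textbf{Equality case.} Suppose $\Qcal(f,f)=0$, and let $W=\{g\in\Bcal:\Qcal(g,\varphi)=0\}$. Then $f$ minimizes $\Qcal$ on $W$. For $g\in W$, the function $t\mapsto\Qcal(f+tg,f+tg)$ is nonnegative and vanishes at $t=0$, so $\Qcal(f,g)=0$. Any $g\in C^\infty_c(M)$ decomposes as $g=\big(g-\tfrac{\Qcal(g,\varphi)}{\lambda}\varphi\big)+\tfrac{\Qcal(g,\varphi)}{\lambda}\varphi$, where the first summand lies in $W$. Together with $\Qcal(f,\varphi)=0$, this gives $\Qcal(f,g)=0$ for all $g\in C^\infty_c(M)$. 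Hence $Lf=0$ weakly, and $f$ is smooth by elliptic regularity.

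Conversely, suppose $Lf=0$ and $f\in\Bcal$. Choose $g_i\in C^\infty_c(M)$ with $g_i\to f$ in $\Bcal$. Then $\Qcal(f,g_i)=-\int g_iLf=0$ for every $i$, and continuity of $\Qcal$ on $\Bcal$ gives $\Qcal(f,f)=0$.
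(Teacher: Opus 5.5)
Your proposal is correct. The second assertion follows the paper's argument essentially verbatim:
\begin{itemize}
\item the negative-definite span $\Span\{\varphi,f\}$, approximated in $\Bcal$, contradicts $\Ind(M)=1$;
\item in the equality case you use the first-variation argument on the $\Qcal$-orthogonal complement of $\varphi$, via the projection $g\mapsto g-\lambda^{-1}\Qcal(g,\varphi)\varphi$;
\item the converse is the same approximation argument.
\end{itemize}

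Where you genuinely depart from the paper is the existence of $\varphi$. The paper simply cites \cite[Proposition 2.5]{Li2017}. You instead construct $\varphi$ directly as a $C^\infty_{\rm loc}$ limit of Dirichlet ground states on $M\cap B_R$. For this you use only that $|A|$ is bounded and tends to zero at infinity (\Cref{lemm:sff-decay}). The key step is your uniform bound $\int_K |A|^2\varphi_R^2\geq |\lambda|/4$, which rules out escape of mass. This makes the lemma self-contained and shows exactly where finite total curvature enters: decay of $|A|$ forces a negative bottom of the spectrum to be an eigenvalue. As a bonus, it also gives a nonnegative ground state. The citation is shorter, and Li's result also provides the several bound states $\varphi_1,\dots,\varphi_I$ needed in the higher-index estimate of \Cref{sec:gen}; your construction as written only yields the first one.

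One small remark: your paragraph about upgrading to $\|\varphi\|_{L^2}=1$ is unnecessary. Once $\varphi\in W^{1,2}(M)$ is nonzero, linearity of $L\varphi+\lambda\varphi=0$ lets you normalize directly. Your cutoff computation then gives $\Qcal(\varphi,\varphi)=\lambda$, which is all the second part uses.
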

\begin{proof}
The existence of $\varphi \in W^{1,2}(M) \cap C^\infty_\textnormal{loc}(M) \subset \Bcal$ follows from \cite[Proposition 2.5]{Li2017}. Note that $\Qcal(\varphi,\varphi) = \lambda < 0$.

Now suppose that $\Qcal(f,\varphi) = 0$ for $f\in \Bcal$. Density of $C^\infty_c(M)$ in $\Bcal$ implies that $\Qcal$ has index $1$ with respect to functions in $\Bcal$. As such, if $\Qcal(f,f) < 0$ then $\Qcal$ would be negative definite on the span of $f,\varphi$, a contradiction. 

It remains to handle the case that $\Qcal(f,f) = 0$. For $h \in C^\infty_c(M),t\in\R$, let 
\[
\hat h = h -   \lambda^{-1} \Qcal(h,\varphi)\varphi.
\]
We have $\Qcal(f + t\hat h,\varphi) = 0$, so 
\[
0 \leq \Qcal(f+t\hat h,f+t\hat h) = 2t \Qcal(f,\hat h) + t^2 \Qcal(\hat h,\hat h)
\]
Thus $0 = \Qcal(f,\hat h) = \Qcal(f,h)$. This implies that $L f=0$ weakly and thus strongly by elliptic regularity. 

On the other hand, if $Lf = 0$ then we can choose $f_j \to f$ in $\Bcal$ and use $0 = \lim_{j\to\infty} \Qcal(f,f_j) = \Qcal(f,f)$. This completes the proof. 
\end{proof}

\section{Jacobi field calculations}\label{sec:jac} 

We continue to consider $N = n+1\geq 4$ and $M^n\subset \R^N$ complete, embedded minimal hypersurface with finite total curvature and $\Ind(M) = 1$. We assume that $M$ is non-flat and keep the convention that the ends are graphical over $\Pi = \{x^N = 0\}$. 

Define the dilation Jacobi field $Z = \bangle{x,\nu}$ satisfying $LZ=0$. Recall $\sigma_\alpha \in \{\pm 1\}$ from \Cref{lemm:unit-normal-lim} and write regularity at infinity of the end as $D^\ell(v_\alpha(y) - h_\alpha - a_\alpha|y|^{2-n}) = O(|y|^{1-n-\ell})$ (see \Cref{def:reg-infty}). 
\begin{lemma}\label{lemm:Z-est}
We have 
\begin{align*}
\nabla^\ell(Z - \sigma_\alpha ( h_\alpha + (n-1) a_\alpha r^{2-n})) & = O(r^{1-n - \ell}),\\
\nabla^\ell(x^N - (h_\alpha + a_\alpha r^{2-n})) & = O(r^{1-n-\ell})
\end{align*}
along each end. In particular $\|Z\|_\Bcal <\infty$ and $\int_M |\nabla x^N|^2 <\infty$. 
\end{lemma}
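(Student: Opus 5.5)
We will compute everything in graphical coordinates along a single end $E_\alpha$, written as the graph of $v = v_\alpha$ over $\Pi\setminus K$, and then convert to the intrinsic variable $r$.

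\textbf{The coordinate $x^N$.} On $E_\alpha$ we have $x = (y, v(y))$, so $x^N = v(y)$. Since $r^2 = |y|^2 + v(y)^2$ and $v = O(1)$, we get
\[
r = |y| + O(|y|^{-1}), \qquad r^{2-n} = |y|^{2-n} + O(|y|^{-n}),
\]
and the same comparison holds for derivatives. The regularity at infinity expansion $D^\ell(v - h_\alpha - a_\alpha|y|^{2-n}) = O(|y|^{1-n-\ell})$ then gives $x^N - (h_\alpha + a_\alpha r^{2-n}) = O(r^{1-n})$. To pass from Euclidean derivatives in $y$ to covariant derivatives $\nabla^\ell$ on $M$, we note that the induced metric in the graph chart is $g_{ij} = \delta_{ij} + \partial_i v\,\partial_j v = \delta_{ij} + O(r^{2-2n})$, with Christoffel symbols $O(r^{1-2n})$ and higher derivatives decaying correspondingly. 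Hence the error terms introduced by this conversion are far smaller than $O(r^{1-n-\ell})$.

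\textbf{The field $Z$.} By \Cref{lemm:unit-normal-lim}, $\nu = \sigma_\alpha(e_N - Dv) + O(r^{2-2n})$, normalized appropriately (the normalization factor is $1 + O(r^{2-2n})$). Therefore
\[
Z = \bangle{x,\nu} = \sigma_\alpha\big(v - y\cdot Dv\big) + O(r^{3-2n}),
\]
where the error comes from $|x|\cdot O(r^{2-2n})$. Substituting $v = h_\alpha + a_\alpha|y|^{2-n} + O(|y|^{1-n})$ and $y\cdot Dv = (2-n)a_\alpha|y|^{2-n} + O(|y|^{1-n})$ gives
\[
v - y\cdot Dv = h_\alpha + (n-1)a_\alpha|y|^{2-n} + O(|y|^{1-n}).
\]
Since $n \geq 3$ we have $3-2n \leq 1-n$, so all errors are $O(r^{1-n})$. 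For derivatives, we differentiate the exact identity $Z = (v - y\cdot Dv)/\sqrt{1+|Dv|^2}$ (up to sign). Each $y$-derivative of an $O(|y|^{1-n})$ term gains a factor $|y|^{-1}$ by the assumed $D^\ell$ bounds. The derivatives of $(1+|Dv|^2)^{-1/2}$ are $O(r^{2-2n-\ell})$, and multiplying these by the $O(1)$ quantity $v - y\cdot Dv$ still gives $O(r^{1-n-\ell})$.

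This derivative bookkeeping is the main obstacle, though only a routine one. We must make sure that the term $y\cdot Dv$, which contains a factor $|y|$, does not lose decay. It does not, because $D^\ell(y\cdot Dv)$ involves only $D^{\ell}v$ and $|y|\,D^{\ell+1}v$, and both are controlled by the expansion.

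\textbf{Consequences.} On each end, $Z$ equals a constant plus an $O(r^{2-n})$ term, and $\nabla Z = O(r^{1-n})$. Hence $|\nabla Z|^2 = O(r^{2-2n})$, which is integrable against the volume growth of \Cref{lemm:vol-grow}, since
\[
\int^\infty r^{2-2n}\, r^{n-1}\,dr < \infty \quad\text{for } n \geq 3.
\]
Likewise $|A|^2 Z^2 = O(r^{-2n})$ by \Cref{lemm:sff-decay}, which is also integrable. Thus $\|Z\|_\Bcal < \infty$ in the sense of the finite integral \eqref{eq:Bcal-def}. The same argument applied to $\nabla x^N = O(r^{1-n})$ gives $\int_M |\nabla x^N|^2 < \infty$. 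The compact core of $M$ contributes only a finite amount to each integral.
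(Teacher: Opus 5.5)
Your proposal is correct and follows essentially the same route as the paper: you write $Z=\sigma_\alpha(v_\alpha - y\cdot Dv_\alpha)$ up to an $O(r^{3-2n})$ error coming from \Cref{lemm:unit-normal-lim}, substitute the regularity-at-infinity expansion, and then use \Cref{lemm:vol-grow,lemm:sff-decay} for the integrability claims. Your extra bookkeeping (comparing $r$ with $|y|$, and Euclidean with covariant derivatives via Christoffel symbols of size $O(r^{1-2n})$) makes explicit what the paper leaves implicit.
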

Note that $\Bcal$ is defined as the completion of compactly supported functions with respect to the given norm, so we are not claiming $Z\in\Bcal$, just finiteness of the norm.
\begin{proof}
We compute
\[
Z - \sigma_\alpha \bangle{y + v_\alpha(y) e_N,e_N- D v_\alpha(y)} = \bangle{y + v_\alpha(y) e_N,\nu - \sigma_\alpha (e_N-D v_\alpha(y))}. 
\] 
The term on the right and its $\ell$-th derivatives are $O(r^{3-2n-\ell})$ by \Cref{lemm:unit-normal-lim}. On the other hand,
\[
\bangle{y + v_\alpha(y) e_N,e_N- D v_\alpha(y)} = v_\alpha(y) - y\cdot D v_\alpha(y),
\]
and regularity at infinity gives
\[
\nabla^\ell(Z - \sigma_\alpha ( h_\alpha + (n-1) a_\alpha r^{2-n})) = O(r^{3-2n -\ell} + r^{1-n-\ell}) = O(r^{1-n-\ell}). 
\]
This is the first expression. The second expression follows immediately from the expansion of the graphical function $v_\alpha$. 

We now have $|\nabla Z|^2 = O(r^{2-2n})$, $|A|^2Z^2 = O(r^{-2n})$ by \Cref{lemm:sff-decay}, and $|\nabla x^N|^2 = O(r^{2-2n})$. As such, the final assertions follow from  \Cref{lemm:vol-grow}.
\end{proof}

Recall the definition of $\Qcal_\infty$ in \eqref{eq:Qcal-infty}. 
\begin{corollary}\label{cor:QinfZ}
We have $\Qcal_\infty(Z,Z) = (n-1)\int_M |\nabla x^N|^2 > 0$. 
\end{corollary}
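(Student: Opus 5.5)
Since $LZ=0$, integration by parts on $M\cap B_R$ gives
\[
\Qcal_R(Z,Z) = \int_{M\cap B_R} |\nabla Z|^2 - |A|^2 Z^2 = -\int_{M\cap B_R} Z\,LZ + \int_{M\cap \partial B_R} Z\,\partial_\vartheta Z = \int_{M\cap\partial B_R} Z\,\partial_\vartheta Z .
\]
Here $\vartheta$ is the outward conormal from \Cref{lemm:conormal}. The plan is to evaluate this boundary term asymptotically end by end using \Cref{lemm:Z-est}. We then compare it with the analogous identity for the harmonic function $x^N$, since $\Delta x^N = 0$ by minimality:
\[
\int_{M\cap B_R}|\nabla x^N|^2 = \int_{M\cap\partial B_R} x^N \partial_\vartheta x^N .
\]

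On the end $E_\alpha$, \Cref{lemm:Z-est} gives
\begin{align*}
Z &= \sigma_\alpha h_\alpha + O(r^{2-n}), & \partial_\vartheta Z &= \sigma_\alpha (n-1)(2-n) a_\alpha r^{1-n} + O(r^{-n}),\\
x^N &= h_\alpha + O(r^{2-n}), & \partial_\vartheta x^N &= (2-n) a_\alpha r^{1-n} + O(r^{-n}).
\end{align*}
For the radial derivatives we use \Cref{lemm:conormal}: $\vartheta = y/|y| + O(r^{1-n})$, and $r$ and $|y|$ agree to leading order along the end, so $\partial_\vartheta r = 1 + O(r^{1-n})$. By \Cref{lemm:vol-grow}, $|M\cap \partial B_R| = O(R^{n-1})$, and more precisely $|E_\alpha\cap\partial B_R| = |S^{n-1}|R^{n-1}(1+o(1))$ because the end is asymptotically a plane. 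Hence
\[
\int_{E_\alpha\cap\partial B_R} Z\,\partial_\vartheta Z \to (n-1)(2-n)|S^{n-1}| h_\alpha a_\alpha ,
\]
using $\sigma_\alpha^2=1$. Similarly,
\[
\int_{E_\alpha\cap\partial B_R} x^N\partial_\vartheta x^N \to (2-n)|S^{n-1}| h_\alpha a_\alpha .
\]
The error terms are $O(R^{n-1}\cdot R^{-n})$ and $O(R^{n-1}\cdot R^{2-n}R^{1-n})$, which tend to $0$ because $n\ge 3$. Summing over the ends shows that $\Qcal_\infty(Z,Z)$ exists and equals $(n-1)\lim_R \int_{M\cap B_R}|\nabla x^N|^2 = (n-1)\int_M|\nabla x^N|^2$. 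The last integral is finite by \Cref{lemm:Z-est}.

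For positivity, suppose $\int_M|\nabla x^N|^2 = 0$. Then $x^N$ is constant on the connected hypersurface $M$, so $M$ is a hyperplane, contradicting non-flatness.

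The main obstacle is bookkeeping the boundary asymptotics carefully. We need to check that $\partial_\vartheta$ of the $r^{2-n}$ term yields exactly the factor $(2-n)$ up to errors that are $o(R^{1-n})$. This uses the conormal estimate together with the fact that $r=|x|$ and $|y|$ differ by $O(r^{3-n})$ on the end. We also need that the area of $E_\alpha\cap\partial B_R$ is asymptotic to $|S^{n-1}|R^{n-1}$. An alternative that avoids the area asymptotics is to write the boundary terms directly in terms of fluxes. Using $\int_{\partial} \partial_\vartheta x^N$, which is independent of $R$ on each end since $x^N$ is harmonic, one can match both limits to $(2-n)|S^{n-1}|a_\alpha$ times the appropriate constants.
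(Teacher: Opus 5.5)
Your proof is correct and follows the paper's route: integrate by parts using $LZ=0$ and $\Delta x^N=0$, then compare the boundary terms on $M\cap\partial B_R$ end by end via \Cref{lemm:Z-est,lemm:conormal}. The paper streamlines the bookkeeping by comparing integrands pointwise, $Z\,\nabla_\vartheta Z=(n-1)\,x^N\nabla_\vartheta x^N+O(R^{-n})$, so only $|M\cap\partial B_R|=O(R^{n-1})$ is needed and the area asymptotics become unnecessary. The same comparison also absorbs the small slips in your radial bookkeeping. When $h_\alpha\neq 0$ one actually has $r-|y|=O(r^{-1})$ and $\partial_\vartheta r=1+O(r^{-2})$, not $O(r^{3-n})$ and $1+O(r^{1-n})$, but only $\partial_\vartheta r=1+o(1)$ matters.
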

\begin{proof}
Recall that $\vartheta$ is the outwards pointing unit conormal to $M\cap B_R$. Combining \Cref{lemm:Z-est,lemm:conormal}, we have
\[
Z \, \nabla_\vartheta Z  = (n-1) (2-n)h_\alpha a_\alpha R^{1-n} + O(R^{-n}) = (n-1) x^N \nabla_\vartheta x^N + O(R^{-n}). 
\]
along $M \cap \partial B_R$. Integrating by parts twice gives
\begin{align*}
\int_{M\cap B_R} |\nabla Z|^2 - |A|^2 |Z|^2 & = \int_{M \cap \partial B_R} Z\,  \nabla_\vartheta Z \\
& = O(R^{-n}) |M\cap \partial B_R| +  (n-1) \int_{M\cap \partial B_R} x^N\, \nabla_\vartheta x^N\\
& = O(R^{-n}) |M\cap \partial B_R| +  (n-1) \int_{M\cap B_R} |\nabla x^N|^2. 
\end{align*}
Letting $R\to\infty$ and using finiteness of the Dirichlet energy of $x^N$ (proved in \Cref{lemm:Z-est}) completes the proof. 
\end{proof}

\begin{remark}
\Cref{cor:QinfZ} generalizes immediately to immersions with parallel ends, but does not seem to extend to arbitrary immersed $M^n\to\R^N$. In the case of non-parallel ends the proof of \Cref{cor:QinfZ} shows that the sign of $\Qcal_\infty(Z,Z)$ is determined by a Dirichlet-to-Neumann problem at infinity. This suggests that there might exist immersed $M^n\to\R^N$ with $\Qcal_\infty(Z,Z) < 0$. 
\end{remark}

\section{Harmonic $1$-forms} \label{sec:harmonic-1-forms}
We continue to consider $N = n+1\geq 4$ and $M^n\subset \R^N$ complete, embedded minimal hypersurface with finite total curvature and $\Ind(M) = 1$. We assume that $M$ is non-flat and keep the convention that the ends are graphical over $\Pi = \{x^N = 0\}$. We write $h_\alpha$ for the height of the end $E_\alpha$ and note that at least one $h_\alpha$ is non-zero. 

\begin{definition}\label{defi:Hcal}
Let $\Hcal$ be the space of harmonic $1$-forms $\omega$ on $M$ so that for $q \in \Pi$ fixed, $\nabla^\ell(\omega^\# - h_\alpha q^\top) = O(r^{1-n-\ell})$ along each end $E_\alpha$, for $q^\top$ the tangential part of $q$ along $M$. Let $\Ical : \Hcal \to \Pi$ be the linear map $\omega\mapsto q$ and let $\Hzero = \ker \Ical$. 
\end{definition}

We let $\Ecal = \Span \{ dx^N\} \subset \Hzero$. 
\begin{lemma}\label{lemm:harm-forms}
We have $\dim \Hzero \geq b_1(M) + k-1$ and $\dim \Hcal  \geq b_1(M) + k + n-1$. 
\end{lemma}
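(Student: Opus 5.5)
Two separate claims need proof. First, $\dim\Hzero\ge b_1(M)+k-1$. Second, $\Ical:\Hcal\to\Pi$ is surjective. Since $\Pi\cong\R^n$ and $\Hzero=\ker\Ical$, surjectivity gives $\dim\Hcal=\dim\Hzero+n\ge b_1(M)+k+n-1$.

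\emph{The bound on $\Hzero$.} I will use the standard decay theory on the asymptotically flat manifold $M$ (regularity at infinity, \Cref{app:harm}).

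\begin{itemize}
\item \emph{The $b_1(M)$ part.} Each end is diffeomorphic to $\R^n\setminus K$ with $n\ge3$, so $H^1$ of each end vanishes. Hence every class in $H^1(M)$ has a representative compactly supported near the ends' complement. Equivalently, $H^1_c(M)\to H^1(M)$ is surjective, since $H^1$ of the boundary at infinity $\sqcup S^{n-1}$ vanishes.
\item \emph{The $k-1$ extra dimensions.} In the exact sequence $0\to H^0(M)\to H^0(\text{ends})\to H^1_c(M)$, these come from classes $d\chi$ with $\chi$ locally constant on the ends. That gives a $(k-1)$-dimensional image.
\item \emph{Harmonic representatives.} The image of $H^1_c\to\mathcal H^1_{L^2}$ is injective modulo the kernel, and Hodge theory gives $L^2$ harmonic representatives. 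Concretely, for each compactly supported closed $\eta$ solve $\Delta f=d^*\eta$ with $f$ decaying (using \Cref{app:harm}); then $\omega=\eta-df$ is harmonic. For the end classes, take $\omega=df$ with $f$ harmonic and $f\to c_\alpha$ on $E_\alpha$.
\item \emph{Decay and independence.} Harmonic functions and forms decay like $r^{1-n}$, so $\omega^\#=O(r^{1-n})$ with derivative bounds, placing $\omega$ in $\Hzero$. Independence follows from the de Rham class together with the distinct limits $c_\alpha$, giving $b_1(M)+k-1$ independent forms.
\end{itemize}

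\emph{Surjectivity of $\Ical$.} Fix $q\in\Pi$. The natural candidate is $d(\bangle{x,q}Z)$-type corrections; better is the observation that $\omega_0=d\bangle{x,q}=q^\top$ is harmonic but not of the right shape, since we need $h_\alpha q^\top$ on each end.

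\begin{itemize}
\item \emph{Model function.} I will use $f=\bangle{x,q}\,x^N$. On an end, $x^N=h_\alpha+a_\alpha r^{2-n}+O(r^{1-n})$, so
\[
\nabla f = x^N q^\top + \bangle{x,q}\nabla x^N = h_\alpha q^\top + O(r^{1-n}),
\]
because $\bangle{x,q}\nabla x^N=O(r\cdot r^{1-n})$ and $a_\alpha r^{2-n}q^\top=O(r^{2-n})$.
\item \emph{The error.} The error $r^{2-n}$ is worse than the required $O(r^{1-n})$. Also $\Delta f=2\bangle{q^\top,\nabla x^N}=O(r^{1-n})$, which is not yet zero.
\item \emph{Correction step.} Solve $\Delta g=\Delta f$ with $g$ having controlled growth, and set $\omega=d(f-g)$. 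This $\omega$ is exact and harmonic. Since $\Delta f=O(r^{1-n})$, the solution $g$ of the Poisson equation on an asymptotically flat end behaves like $O(r^{3-n})$, possibly with a $\log$ when $n=3$. Its gradient $O(r^{2-n})$ must then be combined with the $a_\alpha r^{2-n}q^\top$ term.
\end{itemize}

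The main obstacle is getting the sharp $O(r^{1-n})$ rate. My first attempt is to replace $f$ by $\bangle{x,q}(x^N-\tfrac{1}{n-1}\ldots)$, or rather to use $x^N$ and $Z$ together. Note that $\bangle{x,q}\cdot(\text{function}\to h_\alpha + c\,r^{2-n})$ has gradient $h_\alpha q^\top + c\, r^{2-n}q^\top + c(2-n)r^{1-n}\bangle{x,q}\hat x$. Choosing the harmonic function $x^N$ gives a Laplacian that is exactly a homogeneous degree $(1-n)$ term plus faster decay. That term is $\Delta$ of an explicit degree $(3-n)$ function, which is $\bangle{x,q}r^{2-n}$ up to constants. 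Subtracting $c'_\alpha a_\alpha\bangle{x,q}r^{2-n}$ on each end (cut off) removes the $r^{2-n}$ gradient term, because the computation matches that of the harmonic dipole $\bangle{x,q}r^{-n}$-type structure. Any remaining error is then $O(r^{-n})$, solvable with gradient $O(r^{1-n})$ by \Cref{app:harm}, where harmonic functions are asymptotic to constants plus $O(r^{2-n})$. Any leftover constant-plus-$r^{2-n}$ piece gives $r^{1-n}$ gradients. Derivative estimates follow from elliptic regularity on annuli, so $\Ical\omega=q$ and $\Ical$ is onto.
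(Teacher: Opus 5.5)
\textbf{There is a genuine gap in your surjectivity argument for $\Ical$.} The last step says that once the source term is $O(r^{-n})$, the Poisson equation can be solved with gradient $O(r^{1-n})$. At this critical rate that is false in general. On a Euclidean end, a radial solution of $\Delta\psi=r^{-n}$ satisfies $(r^{n-1}\psi')'=r^{-1}$, so $\psi'\sim r^{1-n}\log r$. The resulting $\omega$ then misses the $O(r^{1-n})$ requirement in \Cref{defi:Hcal}. Also, \Cref{app:harm} describes expansions of harmonic functions; it does not solve $\Delta\psi=\rho$.

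Only the spherical mean of an $r^{-n}$ source is resonant, so your construction can be rescued, but only by an analysis you have not done. Take the subtracted term to be exactly $a_\alpha\bangle{x,q}r^{2-n}$ (i.e.\ $c'_\alpha=1$). You then need the full expansion of $v_\alpha$ from \Cref{def:reg-infty} to show two things. First, the $r^{-n}$ part of the source is $2\,\partial_q\bigl(\sum_j c_{\alpha,j}y_j|y|^{-n}\bigr)$, a degree-two spherical harmonic with zero mean. Second, all remaining contributions (from $|x|$ versus $|y|$, the factor $1+|Dv_\alpha|^2$, and $\Delta_M r^{2-n}=O(r^{-n-2})$) are $O(r^{-n-1})$. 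Your first display is also off: its error is $O(r^{2-n})$, not $O(r^{1-n})$, as you then acknowledge.

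The detour through $\bangle{x,q}\,x^N$ is unnecessary. You already noted that $\bangle{x,q}$ is harmonic on $M$ but has the wrong constant on each end; just multiply it by the locally constant function $h_\alpha$. Take $F$ equal to $h_\alpha\bangle{q,x}$ on each end $E_\alpha$, glued arbitrarily across the compact core. Then $\Delta F$ is compactly supported and $\nabla F=h_\alpha q^\top$ exactly on $E_\alpha$. Solve $\Delta(F+\psi_R)=0$ on $M\cap B_R$ with zero boundary values. The barrier $(1+r^2)^{(2-n)/2}$ gives $|\psi_R|\le C(1+r^2)^{(2-n)/2}$ uniformly in $R$, and rescaled interior estimates give $\nabla^\ell\psi=O(r^{2-n-\ell})$ in the limit. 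Hence $\omega=d(F+\psi)\in\Hcal$ with $\Ical(\omega)=q$. This is the paper's proof.

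Your treatment of $\Hzero$ is a legitimate alternative. The paper instead identifies $\Hzero$ with the $L^2$ harmonic $1$-forms and cites Li's Proposition 4.1. Building forms directly from $H^1_c(M)$, which has dimension $b_1(M)+k-1$ since $H^1(S^{n-1})=0$, is self-contained and avoids proving decay for arbitrary $L^2$ harmonic forms.

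State the independence cleanly, though, rather than via ``injective modulo the kernel.'' The map $[\eta]\mapsto\eta-df_\eta$, with $f_\eta\to0$ on every end, is well defined on $H^1_c(M)$ by uniqueness of decaying solutions. It is injective because $\eta=df_\eta$ with $\eta$ compactly supported forces $f_\eta$ to be locally constant, hence zero, near infinity. Existence of $f_\eta$ again comes from the barrier argument, not from \Cref{app:harm}.
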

\begin{proof}
Assume $\omega$ is a harmonic $1$-form with $|\omega| \in L^2(M)$. Write $\omega = df$ on an end $E_\alpha$ (since $n\geq 3$). We can apply (rescaled) interior estimates to see that $|\omega| = O(r^{-n/2})$ at infinity. Integrating around spherical arcs we may thus assume that $f\to 0$ at infinity. We thus have $|\nabla^\ell \omega| = O(r^{1-n-\ell})$ by \Cref{lemm:harmonic-expansion}, so $\omega \in \Hcal_0$. Conversely, any $\omega \in \Hcal_0$ has $|\omega| \in L^2(M)$. Thus, we can apply \cite[Proposition 4.1]{Li2017} to conclude that $\dim \Hzero \geq b_1(M) + k-1$. 

We now turn to the $\Hcal$ estimate. Since $\dim \Pi = n$ it thus suffices to prove that $\Ical$ is surjective. Pick $q \in \Pi$ and choose a function $F \in C^\infty(M)$ so that $F= h_\alpha \bangle{q,x}$ on each end $E_\alpha$. Since the coordinate functions are harmonic on $M$ we have $\Delta F$ compactly supported. We claim that we can solve $\Delta(F+\psi) = 0$ with $|\nabla^\ell \psi| = O(r^{2-n-\ell})$ along each end. Indeed, we can solve $\Delta(F+\psi_R) = 0$ on $M\cap B_R$ with Dirichlet boundary conditions. Then we observe that minimality gives
\[
\Delta (1+r^2)^{\frac{2-n}{2}} = n(2-n) (1+r^2)^{-\frac {n+2}{2}}(1+r^2 - r^2 |\nabla r|^2) \leq -n(n-2) (1+r^2)^{-\frac{n+2}{2}}  .
\]
For $C$ sufficiently large (depending on $\|\Delta F\|_{L^\infty}$ but independent of $R$) the maximum principle guarantees that $|\psi_R|   \leq C (1+r^2)^{\frac{2-n}{2}}$ on $M\cap B_R$. Schauder estimates let us send $R\to\infty$ to obtain $\psi$ with the asserted derivative estimates. Taking $\omega = d(F+\psi)$ proves the assertion. 
\end{proof}

\section{Admissible pairings}\label{sec:admissible}

Assume that $N \geq 4$. Fix $\Omega \in \Lambda^{N-4}\R^N$ (recall the convention that $\Lambda^0\R^N = \R$). Define $P_\Omega, K_\Omega : \Lambda^2\R^N\to\Lambda^2\R^N$ by
\[
K_\Omega \eta = \star (\Omega\wedge \eta), \qquad P_\Omega = \Id + K_\Omega. 
\]
Noting that 
\begin{equation}\label{eq:K_omega-pairing}
\bangle{K_\Omega \eta,\zeta} = \star (\Omega \wedge \eta\wedge \zeta)
\end{equation}
 we see that $K_\Omega$ and $P_\Omega$ self-adjoint. We recall that $\eta \in \Lambda^2 \R^N$ is decomposable if there is $\alpha_1,\alpha_2 \in \R^N$ with $\eta = \alpha_1\wedge \alpha_2$. 
\begin{lemma}\label{lemm:P-decomp}
For $\eta \in \Lambda^2\R^N$ decomposable we have $\bangle{P_\Omega \eta,\eta} = |\eta|^2$. 
\end{lemma}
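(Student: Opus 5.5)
Since $P_\Omega = \Id + K_\Omega$, we have $\bangle{P_\Omega\eta,\eta} = |\eta|^2 + \bangle{K_\Omega\eta,\eta}$. The statement therefore reduces to showing $\bangle{K_\Omega \eta,\eta} = 0$ whenever $\eta$ is decomposable. The pairing formula \eqref{eq:K_omega-pairing} with $\zeta = \eta$ gives
\[
\bangle{K_\Omega\eta,\eta} = \star(\Omega\wedge\eta\wedge\eta).
\]
We then write $\eta = \alpha_1\wedge\alpha_2$ and compute
\[
\eta\wedge\eta = \alpha_1\wedge\alpha_2\wedge\alpha_1\wedge\alpha_2 = 0,
\]
because $\alpha_1$ appears twice and wedge products of $1$-forms are alternating. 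Hence $\Omega\wedge\eta\wedge\eta = 0$, so $\star(\Omega\wedge\eta\wedge\eta)=0$, which proves the claim.

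The only point needing care is \eqref{eq:K_omega-pairing} itself, which the paper records just before the statement. If I had to verify it, I would use the defining property of the Hodge star, $\bangle{\star\beta,\zeta} = \star(\zeta\wedge\beta)$ for $\beta\in\Lambda^{N-2}\R^N$ and $\zeta\in\Lambda^2\R^N$. This gives $\bangle{\star(\Omega\wedge\eta),\zeta} = \star(\zeta\wedge\Omega\wedge\eta)$. Moving $\zeta$ past $\Omega\wedge\eta$ costs a sign $(-1)^{2(N-2)} = 1$, since $\zeta$ has even degree, so this equals $\star(\Omega\wedge\eta\wedge\zeta)$.

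That sign bookkeeping is the only potential obstacle, and it is harmless here: the quantity in question vanishes identically, so any sign convention gives the same conclusion. No decomposability of $\Omega$ is needed, and the case $N=4$, where $\Omega$ is a scalar, is included.
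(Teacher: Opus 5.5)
Your proof is correct and is essentially the paper's argument: write $\bangle{P_\Omega\eta,\eta} = |\eta|^2 + \star(\Omega\wedge\eta\wedge\eta)$ via \eqref{eq:K_omega-pairing}, then use $\eta\wedge\eta = 0$ for decomposable $\eta$. Your check of \eqref{eq:K_omega-pairing} from the Hodge star identity $\bangle{\star\beta,\zeta} = \star(\zeta\wedge\beta)$ is also sound; the paper simply states that formula without proof.
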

\begin{proof}
We have $\bangle{K_\Omega \eta,\eta} = \star (\Omega\wedge \eta\wedge\eta) = 0$ since $\eta\wedge \eta =0$ by decomposability. 
\end{proof}
The definition of $P_\Omega$ and observation from \Cref{lemm:P-decomp} are closely related to the Thorpe trick relating sectional curvature and positivity of a modification of the curvature operator by a $4$-form, cf.\ \cite{Finsler,Thorpe:curvuturePOS4mfld,Thorpe:zeroes,Thorpe:zeroes-erratum,BettiolMendes:nonneg,BettiolMendes:strongly-pos-curv,BKM}.

Suppose now that $P_\Omega \geq 0$ is positive semidefinite. Then by rescaling an eigenbasis of $P_\Omega$ we can find $\Theta_1,\dots,\Theta_r \in \Lambda^2\R^N$ so that
\[
P_\Omega = \sum_{a=1}^r \Theta_a\otimes\Theta_a
\]
where $(\Theta_a \otimes \Theta_a)(\eta) = \bangle{\eta,\Theta_a}\Theta_a$. 

\begin{definition}\label{defi:adm-pairing-set}
We call $\Theta_1,\dots,\Theta_r$ that arise from this construction an \emph{admissible pairing set}. Below we will often conflate $P_\Omega$ with $\Theta_1,\dots,\Theta_r$. 
\end{definition}

\begin{example}
Take $N = 4$ and let $\Omega = 1 \in \Lambda^0\R^4$. Write $\eta \in \Lambda^2\R^4$ as its self-dual and anti-self-dual decomposition $\eta^++\eta^-$. Then $P_\Omega \eta = \eta + \star \eta = 2 \eta^+$. Since $\bangle{\eta^+,\eta^-} = 0$ we find $\bangle{P_\Omega \eta,\eta} = 2|\eta^+|^2 \geq 0$. Letting
\begin{align*}
\Theta_1 &= e_1\wedge e_2 + e_3\wedge e_4,\\
\Theta_2 & = e_1\wedge e_3 + e_4\wedge e_2,\\
\Theta_3 &= e_1\wedge e_4 + e_2\wedge e_3,
\end{align*}
this is an admissible pairing set, since $P_\Omega = \sum_{a=1}^3 \Theta_a \otimes \Theta_a$. 
\end{example}

\subsection{Balanced $2$-forms} A key novelty in the proof of \Cref{thm:main} is choosing an admissible pairing set that's adapted to the index form of a specific minimal hypersurface. 
\begin{lemma}\label{lemm:block-decomp}
For $\eta \in \Lambda^2\R^N\setminus\{0\}$ there exists $U_1,\dots,U_m$ pairwise orthonormal, oriented two-planes in $\R^N$ and $\lambda_1 \geq \dots\geq \lambda_m > 0$ so that $\eta = \lambda_1 \tau_1 + \dots + \lambda_m \tau_m$, where $\tau_i$ is the volume form of $U_i$. 
\end{lemma}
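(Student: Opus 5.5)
This is the normal form theorem for real skew-symmetric matrices, and I would prove it by the spectral theory of the associated skew operator. Using the inner product \eqref{eq:inner-prod-forms}, identify $\eta = \sum_{i<j}\alpha^{ij} e_i\wedge e_j$ with the skew-adjoint linear map $S_\eta:\R^N\to\R^N$ determined by $\bangle{S_\eta u,v} = \bangle{\eta, u\wedge v}$. Since $\eta\neq 0$, $S_\eta\neq 0$. The operator $-S_\eta^2 = S_\eta^T S_\eta$ is symmetric and positive semidefinite. Its nonzero eigenvalues can therefore be written as $\mu^2$ with $\mu>0$.

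First I would show that each nonzero eigenspace $E_\mu = \ker(S_\eta^2+\mu^2)$ is $S_\eta$-invariant and even-dimensional. Invariance holds because $S_\eta$ commutes with $S_\eta^2$. On $E_\mu$ the map $J = \mu^{-1}S_\eta$ satisfies $J^2=-\Id$ and is orthogonal, because $|Ju|^2 = -\mu^{-2}\bangle{S_\eta^2u,u} = |u|^2$. This makes $E_\mu$ a complex vector space, so $\dim E_\mu$ is even. Pick a unit vector $u\in E_\mu$ and set $w=Ju$. Then $\bangle{u,w}=\mu^{-1}\bangle{S_\eta u,u}=0$ by skewness, and $|w|=1$. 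The plane $\Span\{u,w\}$ is $J$-invariant, and so is its orthogonal complement inside $E_\mu$, since $J$ is orthogonal with $J^{-1}=-J$. Induction then splits $E_\mu$ into orthonormal $J$-invariant planes. Doing this for each eigenvalue, and using that distinct eigenspaces of a symmetric operator are orthogonal, gives an orthonormal family $u_1,w_1,\dots,u_m,w_m$ with $S_\eta u_i = \mu_i w_i$ and $S_\eta w_i = -\mu_i u_i$. The kernel of $S_\eta$ is the orthogonal complement of their span.

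Next I would read off $\eta$. Extend this family to an orthonormal basis of $\R^N$ by adding vectors of $\ker S_\eta$. Since $\eta$ is determined by its pairings with decomposable elements, the only nonzero pairings $\bangle{\eta, b\wedge b'}$ between basis vectors are $\bangle{\eta,u_i\wedge w_i} = \bangle{S_\eta u_i,w_i} = \mu_i$ (up to a sign fixed by the convention). Writing $\eta$ in the orthonormal basis $\{b\wedge b'\}$ of $\Lambda^2\R^N$ induced by this basis gives $\eta = \sum_i \mu_i\, u_i\wedge w_i$. Let $U_i$ be the plane $\Span\{u_i,w_i\}$ oriented so that $\tau_i = u_i\wedge w_i$; if the sign convention produces $-\mu_i$, swap $u_i$ and $w_i$. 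Finally, relabel so that $\lambda_1\ge\dots\ge\lambda_m>0$. The planes are pairwise orthogonal by construction, which is what "pairwise orthonormal" means here, and $m\ge1$ because $\eta\ne0$.

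There is no real obstacle. The only points needing care are the bookkeeping of the sign and normalization when passing between $\eta$ and $S_\eta$, and the orthogonality of distinct eigenspaces, which is immediate from the symmetry of $S_\eta^2$.
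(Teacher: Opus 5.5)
Your proof is correct and is essentially the paper's argument: the paper simply cites the spectral theorem for normal operators (the normal form of a real skew-symmetric matrix, \cite[Proposition 3.3.4]{Taylor:LA}), and you carry out that normal form explicitly, using the real spectral theorem for the symmetric operator $-S_\eta^2$ together with the complex structure $J=\mu^{-1}S_\eta$ on each eigenspace instead of diagonalizing $S_\eta$ over $\C$. Two small remarks: with your convention $\bangle{S_\eta u,v}=\bangle{\eta,u\wedge v}$ the coefficient of $u_i\wedge w_i$ is exactly $+\mu_i$, so no swap is needed; and the identification of $\ker S_\eta$ with the complement of the eigenspaces uses $\ker S_\eta=\ker S_\eta^2$, which follows from $|S_\eta u|^2=-\bangle{S_\eta^2u,u}$.
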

\begin{proof}
This follows from the spectral theorem for normal operators, as explained in e.g.\ \cite[Proposition 3.3.4]{Taylor:LA}. 
\end{proof}

We call a $2$-form $\eta = \lambda_1\tau_1+\dots+\lambda_m\tau_m$ as in \Cref{lemm:block-decomp} \emph{balanced} if $\lambda_1 \leq \lambda_2 + \dots + \lambda_m$. This notion was introduced in \cite{GPW}. See also \cite{DP-maxcut} and in particular \cite[Definition 3.3]{SCPW}.  

Our goal is to prove that any sufficiently large subspace of $\Lambda^2\R^N$ contains a balanced $2$-form.  We first need certain topological preliminaries. To this end, let $\lambda \in H^1(\R P^{N-1};\Z_2)$ denote the generator of $H^*(\R P^{N-1} ;\Z_2) = \Z_2[\lambda]/(\lambda^{N})$ and denote by $\gamma$  the canonical real line bundle over $\R P^{N-1}$. We write $w_k(E) \in H^k(\R P^{N-1};\Z_2)$ for the Stiefel--Whitney characteristic classes of a real vector bundle $E$ over $\R P^{N-1}$. Recall that $w_1(E) = 0$ if and only if $E$ is orientable. We write $\underline \R^k$ for the trivial rank-$k$ vector bundle over a given manifold. 

The next lemma follows from Steenrod's obstruction theory \cite{Steenrod}. An explicit proof can be found in  \cite[Proposition 2.2(2)]{KucharzSimanca}. We give an alternative proof here. 
 
\begin{lemma}\label{lemm:rank-2-RPn}
Let $N\geq 4$ and let $\xi$ be a rank-two real vector bundle over $\R P^{N-1}$. Assume that $w_1(\xi) = \lambda$. Then $\xi$ is isomorphic to $\underline{\R} \oplus \gamma$. 
\end{lemma}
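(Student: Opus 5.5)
The plan is to produce a nowhere-vanishing section of $\xi$ and then identify the quotient line bundle using $w_1$. Suppose we have a nowhere-zero section $s$. Then $s$ spans a trivial line subbundle, so $\xi \cong \underline\R \oplus \ell$ for some line bundle $\ell$ (take $\ell$ to be the orthogonal complement with respect to a metric). By the Whitney sum formula, $w_1(\xi) = w_1(\ell) = \lambda$. Line bundles over $\R P^{N-1}$ are classified by $w_1 \in H^1(\R P^{N-1};\Z_2)\cong \Z_2$, and $w_1(\gamma) = \lambda$, so $\ell \cong \gamma$. This yields the claim.

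The main step is therefore to show that $\xi$ admits a nowhere-vanishing section. I would use the twisted Euler class. A generic section of $\xi$ vanishes transversally on a closed codimension-two submanifold $Z\subset \R P^{N-1}$. The standard obstruction-theory statement is that the primary obstruction to a nonvanishing section is the twisted Euler class $e(\xi) \in H^2(\R P^{N-1};\Z_{w_1})$, with coefficients in the local system determined by the orientation character $w_1(\xi) = \lambda$. Since a rank-two bundle has fibers $\R^2$, the sphere bundle has fiber $S^1$. The obstruction to extending a section of the $S^1$-bundle therefore lies in $H^{k+1}(\R P^{N-1};\pi_k(S^1))$, which vanishes for $k\geq 2$ because $\pi_k(S^1)=0$. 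So the twisted Euler class is the only obstruction, and it suffices to show $H^2(\R P^{N-1};\Z_{w})=0$ for the nontrivial local system $\Z_w$.

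To compute this group, I would use the cellular cochain complex of $\R P^{N-1}$ with twisted coefficients. In the untwisted case the coboundaries alternate between $0$ and multiplication by $2$, starting with $d:C^0\to C^1$ being $0$. With the twist, the roles are swapped: the boundary maps become $1\pm(-1)\cdot(\text{deg of antipodal})$, which shifts the parity. Equivalently, $H^*(\R P^{N-1};\Z_w) \cong H^*(S^{N-1};\Z)^{\text{anti-inv}}$ away from $2$-torsion computations. Concretely, $d:C^1\to C^2$ is $0$ and $d:C^2\to C^3$ is multiplication by $2$ in the twisted complex. Using $N-1\geq 3$, so that the $3$-cell exists, this gives $H^2(\R P^{N-1};\Z_w)=\ker(2)/\operatorname{im}(0)=0$. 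The careful check of these signs is the step I expect to be the main obstacle. As a sanity check, the twisted $H^1$ should be $\Z_2$ rather than $\Z$, and $H^0$ should be $0$.

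If the sign bookkeeping is problematic, I would fall back on an alternative route. Pull back to the orientation double cover $S^{N-1}$, where $\pi^*\xi$ is an oriented rank-two bundle. Then $H^2(S^{N-1};\Z)=0$ for $N\geq 4$, so $\pi^*\xi$ is trivial, and in fact it is the trivial bundle $S^{N-1}\times\R^2$ carrying a free $\Z_2$-action covering the antipodal map that acts by an orientation-reversing linear map. One would then need to homotope the transition to a reflection $\mathrm{diag}(1,-1)$, using $\pi_1$ and $\pi_{N-1}$ of $O(2)^-\simeq S^1$ and $N-1\geq 3$. This descends to $\underline\R\oplus\gamma$. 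In either version the hypothesis $N\geq 4$ enters exactly through the vanishing of degree-two obstructions.
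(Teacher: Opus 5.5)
Your proof is correct, but it takes a different route from the paper's. You reduce the lemma to finding a nowhere-vanishing section and get that section from obstruction theory. Since the fiber of the sphere bundle is $S^1$, the only obstruction is the twisted Euler class in $H^2(\R P^{N-1};\Z_w)$. Your sign check comes out right: with the generator of $\pi_1$ acting by $-1$, the twisted cellular coboundaries are $\delta^k = 1+(-1)^k$, so $\delta^0=2$, $\delta^1=0$, $\delta^2=2$. This gives $H^0=0$, $H^1=\Z/2$, and $H^2=0$ once the $3$-cell exists. Equivalently, $H^2(\Z_2;\Z_-)=0$ and $\R P^{N-1}$ shares its $3$-skeleton with $\R P^\infty$. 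Your aside about anti-invariant cohomology of $S^{N-1}$ only holds after inverting $2$, so on its own it would not give $H^2=0$; the cellular computation does the work.

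Your main route is the Steenrod obstruction-theory argument that the paper mentions just before the lemma but does not carry out. The paper instead makes your fallback precise. First, $\pi^*\xi$ is a complex line bundle on $S^{N-1}$, hence trivial because $H^2(S^{N-1};\Z)=0$. The orientation-reversing deck involution then has the form $(x,z)\mapsto(-x,\psi(x)\bar z)$, and being an involution forces $\psi(-x)=\psi(x)$. So $\psi$ descends to a map $\R P^{N-1}\to S^1$, which is null-homotopic because there is no nontrivial homomorphism $\Z_2\to\Z$. Writing $\psi=e^{i\theta}$ and rescaling the trivialization by $e^{-i\theta/2}$ turns the involution into $z\mapsto\bar z$, which visibly splits as $\underline{\R}\oplus\gamma$.

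In your fallback sketch, the real difficulty is equivariance, and the paper resolves it through $\pi_1$ of the base, not through $\pi_{N-1}$ of $O(2)^-$. The paper's argument is more elementary: it needs no local coefficients and no identification of the primary obstruction. Yours makes clearer exactly how $N\geq 4$ enters, namely by killing the twisted $H^2$. For $N=3$ one has $H^2(\R P^2;\Z_w)\cong\Z$, and $T\R P^2$, with $w_2\neq 0$, is a genuine counterexample.
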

\begin{proof}
Let $\pi : S^{N-1} \to \R P^{N-1}$. Since $H^1(S^{N-1};\Z_2) = 0$, the pullback bundle $\pi^*\xi$ is orientable and thus can be viewed as a complex line bundle. Complex line-bundles are classified by $H^2(S^{N-1};\Z) = 0$ (via the first Chern class), and thus $\pi^*\xi = \underline \C = S^{N-1} \times \C$. 

Since $\xi$ is non-orientable, deck transformations are orientation reversing. Using the above trivialization, we find a continuous map $\psi: S^{N-1}\to S^1\subset \C$ so that the deck transformations are $(x,z) \mapsto (-x,\psi(x)\bar z)$. Since the deck transformation has order two, we find $\overline{\psi(x)}\psi(-x) = 1$, so $\psi(-x) = \psi(x)$. Thus, $\psi$ factors through a map $\psi : \R P^{N-1} \to S^1$ which must be null-homotopic.\footnote{There's no nontrivial homomorphism $\Z_2 \to \Z$, so any map $\R P^{N-1} \to S^1$ would lift to $\R P^{N-1} \to \R$, and the lift is manifestly null-homotopic.} As such, we can write $\psi(x) = e^{i\theta(x)}$ and then multiply the trivialization by $e^{-i\theta(x)/2}$ to arrange it so that the deck transforms are $(x,z)\mapsto(-x,\bar z)$. 

Under this trivialization, if we write $\pi^*\xi = S^{N-1} \times \C = (S^{N-1} \times \R) \oplus (S^{N-1} \times i \R)$, these factors descend to $\underline{\R}$ and $\gamma$. This completes the proof. 
\end{proof}

The next result should be compared to \cite[Example 3.6]{HatcherVBKT}.

\begin{corollary}\label{coro:w1-sumvb}
Let $N\geq 4$. Consider $\xi$ a rank-two real vector bundle over $\R P^{N-1}$. Assume there's a vector bundle $\zeta$ with $\xi\oplus \zeta = \underline{\R}^{N}$ the trivial $N$-bundle. Then $w_1(\xi) = 0$.
\end{corollary}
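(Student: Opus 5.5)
We argue by contradiction, using \Cref{lemm:rank-2-RPn} together with the Whitney product formula. Since $H^1(\R P^{N-1};\Z_2)=\Z_2\{\lambda\}$, the only alternative to $w_1(\xi)=0$ is $w_1(\xi)=\lambda$. In that case \Cref{lemm:rank-2-RPn} gives $\xi\cong\underline{\R}\oplus\gamma$, and therefore $w(\xi)=w(\gamma)=1+\lambda$, using that $w(\gamma)=1+\lambda$ for the canonical line bundle.

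Next we use the complement $\zeta$. From $\xi\oplus\zeta=\underline{\R}^N$, the Whitney sum formula gives $w(\xi)\,w(\zeta)=w(\underline\R^N)=1$ in $H^*(\R P^{N-1};\Z_2)=\Z_2[\lambda]/(\lambda^N)$. Consequently
\[
w(\zeta)=(1+\lambda)^{-1}=1+\lambda+\lambda^2+\dots+\lambda^{N-1}.
\]
This is well defined because $1+\lambda$ is a unit in the truncated polynomial ring.

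We now compare ranks. Since $\zeta$ has rank $N-2$, we must have $w_k(\zeta)=0$ for $k>N-2$. In particular $w_{N-1}(\zeta)=0$. The formula above, however, gives $w_{N-1}(\zeta)=\lambda^{N-1}$, which is nonzero in $H^{N-1}(\R P^{N-1};\Z_2)$. This contradiction shows $w_1(\xi)\neq\lambda$, so $w_1(\xi)=0$.

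The only step that needs care is that the rank of $\zeta$ is exactly $N-2$. If $\zeta$ is allowed to be disconnected or of non-constant rank, one restricts to the relevant component; since $\R P^{N-1}$ is connected, the rank is constant and equal to $N-2$. With that checked, the remainder is formal. The hypothesis $N\geq 4$ enters only through the application of \Cref{lemm:rank-2-RPn}.
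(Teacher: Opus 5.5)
Your proposal is correct and follows essentially the same route as the paper: you assume $w_1(\xi)=\lambda$, use \Cref{lemm:rank-2-RPn} to get $w(\xi)=1+\lambda$, invert via the Whitney formula to get $w_{N-1}(\zeta)=\lambda^{N-1}\neq 0$, and contradict $\rank\zeta = N-2$. Your side remarks (that $1+\lambda$ is a unit, that the rank of $\zeta$ is constant, and that $N\geq 4$ is used only through the lemma) are accurate.
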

\begin{proof}
Assume for contradiction that $w_1(\xi) = \lambda$. \Cref{lemm:rank-2-RPn} implies that $\xi$ is isomorphic to $\underline{\R}\oplus \gamma$ and thus $w(\xi) = 1 + \lambda$. The Whitney product formula gives 
\[
w(\zeta) = w(\xi)^{-1} = 1 + \lambda + \dots + \lambda^{N-1}.
\]
Thus $w_{N-1}(\zeta) = \lambda^{N-1}$. On the other hand, since $\rank(\zeta) = N - \rank (\xi) = N - 2$, we must have $w_{N-1}(\zeta) = 0$. This is a contradiction. 
\end{proof}

We're now prepared to prove the following.
\begin{proposition}\label{prop:finding-balanced-form}
Suppose that $W\subset \Lambda^2\R^N$ has $\dim W  = N\geq 4$. Then there's  a non-zero balanced $2$-form $\eta \in W\setminus\{0\}$.
\end{proposition}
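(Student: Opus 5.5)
The plan is to argue by contradiction using \Cref{coro:w1-sumvb}. Suppose that no $\eta\in W\setminus\{0\}$ is balanced. By \Cref{lemm:block-decomp}, every $\eta\in W\setminus\{0\}$ can then be written as $\eta=\lambda_1\tau_1+\dots+\lambda_m\tau_m$ with $\lambda_1>\lambda_2+\dots+\lambda_m$. In particular $\lambda_1>\lambda_2$ when $m\geq 2$, and the case $m=1$ is automatic. So the top eigenvalue is strictly separated, and the oriented two-plane $U_1(\eta)$ is uniquely determined by $\eta$.

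\emph{Step 1: continuity of the top plane.} View $\eta$ as a skew-symmetric endomorphism of $\R^N$. Then $-\eta^2$ is symmetric and nonnegative, with eigenvalues $\lambda_i^2$, each of multiplicity two on $U_i$, and $0$ on the complement. Since $\lambda_1^2$ is strictly larger than every other eigenvalue, its eigenspace is exactly $U_1(\eta)$. The spectral projection onto this eigenspace can be written as a Riesz contour integral around $\lambda_1^2$. Because the spectral gap persists in a neighbourhood of each $\eta\neq 0$, this projection depends continuously on $\eta\in W\setminus\{0\}$. The orientation of $U_1(\eta)$ is also continuous: it is given by $\tau_1=\lambda_1^{-1}\,\eta|_{U_1}$, i.e.\ by the restriction of $\eta$ to $U_1(\eta)$ rescaled by the continuous positive function $\lambda_1$. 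I expect this step, namely making sure that failure of balancedness really forces a spectral gap and hence continuity, to be the only delicate point.

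\emph{Step 2: the bundle.} Identify the unit sphere of $W$ (with respect to the inner product \eqref{eq:inner-prod-forms}) with $S^{N-1}$, using $\dim W=N$. Since $U_1(-\eta)=U_1(\eta)$ as an unoriented plane, the assignment $[\eta]\mapsto U_1(\eta)$ defines a continuous map from $\R P^{N-1}=\mathbb P(W)$ to the Grassmannian of two-planes in $\R^N$. This gives a rank-two subbundle $\xi\subset\underline{\R}^N$ over $\R P^{N-1}$. Its orthogonal complement $\zeta$ satisfies $\xi\oplus\zeta=\underline\R^N$, so \Cref{coro:w1-sumvb} gives $w_1(\xi)=0$.

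\emph{Step 3: the contradiction.} We show instead that $\xi$ is non-orientable, i.e.\ $w_1(\xi)=\lambda$. The top form of $-\eta$ is $-\tau_1(\eta)$, so the continuous orientation $\tau_1(\eta)$ of the pullback $\pi^*\xi$ to $S^{N-1}$ is reversed by the antipodal map. Take a path in $S^{N-1}$ from $\eta$ to $-\eta$; it projects to a loop in $\R P^{N-1}$ generating $\pi_1$. Transporting the orientation of $\xi$ along this loop then reverses it. Hence $\xi$ is non-orientable along the generator, so $w_1(\xi)=\lambda\neq 0$, contradicting Step 2. Therefore $W$ contains a nonzero balanced $2$-form.
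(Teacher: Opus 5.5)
Your proposal is correct and follows the paper's proof essentially verbatim. The paper also argues by contradiction, builds the rank-two top-plane bundle $\xi\subset\underline{\R}^N$ over $\mathbb{P}(W)\cong\R P^{N-1}$, and deduces non-orientability from $\tau_1(-\eta)=-\tau_1(\eta)$, which contradicts \Cref{coro:w1-sumvb}. Your spectral-gap (Riesz projection) justification of continuity and your explicit loop argument for $w_1(\xi)=\lambda$ fill in details that the paper states in a single line.
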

\begin{proof}
Suppose not. Combining this with \Cref{lemm:block-decomp} we see that any $\alpha \in W \setminus \{0\}$ can be written $\alpha = \lambda_1 \tau_1 + \dots + \lambda_m \tau_m$  where $\lambda_1 > \lambda_2 + \dots + \lambda_m$. Let $U=U(\alpha)$ denote the unique two-plane associated to $\tau_1$. Note that $U(-\alpha) = U(\alpha)$. As such, we can define
\[
\xi = \{ ([\alpha] , v)  \in  \mathbb{P}(W) \times \R^N ,   v \in U(\alpha)  \} 
\]
and observe that this defines a rank-two (real) vector bundle over $\R P^{N-1}$ by continuous dependence of $U(\alpha)$ on $\alpha$. By construction, if we take  $\zeta = \xi^\perp$, we have $\xi \oplus \zeta = \underline{\R}^{N}$. On the other hand, when $\alpha \in W\setminus\{0\}$, we note that $\tau_1=\tau_1(\alpha)$ defines an orientation of $U(\alpha)$, but $\tau_1(-\alpha) = -\tau_1(\alpha)$. Thus, $\xi$ is non-orientable, i.e.\ $w_1(\xi) = \lambda$. This contradicts \Cref{coro:w1-sumvb}.  
\end{proof}
\subsection{Admissible pairings associated to balanced $2$-forms}
We now show that an admissible pairing set can be chosen to be compatible with a given balanced $2$-form in the following sense.\footnote{In fact, this condition is necessary and sufficient: If $P_\Omega\geq 0$ then it admits a square-root $S$. \Cref{lemm:P-decomp} gives $\|S \tau_i\| = 1$. Thus $0 = S \eta = \sum_{i=1}^m \lambda_i S\tau_i$, so $\eta$ is balanced by the triangle inequality.  }  

\begin{proposition}\label{prop:pairing-from-balanced}
For $\eta\in \Lambda^2\R^N\setminus\{0\}$ balanced, there is $\Omega \in \Lambda^{N-4}\R^N$ with $P_\Omega$ positive semidefinite and $P_\Omega \eta = 0$. 
\end{proposition}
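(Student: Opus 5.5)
We may assume $N$ is even and $2m\le N$; rotate so that $U_i=\Span\{e_{2i-1},e_{2i}\}$ and $\tau_i=e_{2i-1}\wedge e_{2i}$. The footnote above already shows that $P_\Omega\ge 0$ and $P_\Omega\eta=0$ force a planar closed polygon with sides $\lambda_i$. The plan is to reverse this. Balancedness, $\lambda_1\le\lambda_2+\dots+\lambda_m$, is exactly the condition for a closed polygon in $\R^2$ with side lengths $\lambda_1,\dots,\lambda_m$ to exist. So we choose unit vectors $v_1,\dots,v_m\in\R^2$ with $\sum_i\lambda_i v_i=0$. For $m=1$ there is nothing to do: balancedness fails for $m=1$, since $\lambda_1>0$. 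Let $G_{ij}=\bangle{v_i,v_j}$ be the Gram matrix. It is positive semidefinite, has unit diagonal, and satisfies $G\lambda=0$.

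For $i<j$ let $\Omega_{ij}\in\Lambda^{N-4}\R^N$ be the wedge of the standard basis vectors not lying in $U_i\oplus U_j$, in the order for which $K_{\Omega_{ij}}$ acts as the Hodge star of the $4$-space $U_i\oplus U_j$ on $\Lambda^2(U_i\oplus U_j)$. With this choice $K_{\Omega_{ij}}\tau_i=\tau_j$ and $K_{\Omega_{ij}}\tau_j=\tau_i$. Moreover $K_{\Omega_{ij}}$ vanishes on every basis $2$-form $e_a\wedge e_b$ that does not lie in $\Lambda^2(U_i\oplus U_j)$; the same holds on basis forms involving $e_N$ when $N$ is odd. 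We then set
\[
\Omega=\sum_{i<j}G_{ij}\,\Omega_{ij}.
\]
Since $\Omega\mapsto K_\Omega$ is linear, $K_\Omega=\sum_{i<j}G_{ij}K_{\Omega_{ij}}$.

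Next we check $P_\Omega\eta=0$. We have
\[
K_\Omega\tau_k=\sum_{j\ne k}G_{kj}\tau_j ,
\]
so on $\Span\{\tau_1,\dots,\tau_m\}$ the operator $P_\Omega$ has matrix $I+(G-I)=G$ in the basis $\tau_k$. Hence $P_\Omega\eta=\sum_k (G\lambda)_k\tau_k=0$.

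For positive semidefiniteness, we decompose $\Lambda^2\R^N$ orthogonally into pieces invariant under every $K_{\Omega_{ij}}$:
\begin{enumerate}
\item the span of the $\tau_k$;
\item for each $k$, the forms $e_{2k-1}\wedge e_{2k-1}'$-type elements inside $\Lambda^2U_k$, which is just $\tau_k$ and so already covered by (1);
\item for each pair $i<j$, the $4$-dimensional cross space $U_i\wedge U_j$, spanned by $e_a\wedge e_b$ with $a\in U_i$ and $b\in U_j$;
\item forms involving a basis vector outside $\bigoplus_k U_k$.
\end{enumerate}
On the span of the $\tau_k$, $P_\Omega$ acts by $G\ge 0$. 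On pieces of type (4), every $K_{\Omega_{ij}}$ vanishes, so $P_\Omega=\Id$ there. On $U_i\wedge U_j$ only $\Omega_{ij}$ contributes, because $\Omega_{kl}\wedge e_a\wedge e_b=0$ unless $\{k,l\}=\{i,j\}$. There $P_\Omega=\Id+G_{ij}\star_4$, where $\star_4$ is an involution with eigenvalues $\pm1$ on $U_i\wedge U_j$. This is positive semidefinite because $|G_{ij}|\le 1$ for unit vectors.

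The step I expect to be the main obstacle is the careful verification that this decomposition really is $K_\Omega$-invariant, together with the sign and orientation bookkeeping. In particular one must check that the mixed terms $\star(\Omega_{kl}\wedge e_a\wedge e_b)$ vanish off the listed blocks, and that orientations can be chosen consistently so that $K_{\Omega_{ij}}\tau_i=+\tau_j$. The sign matters in a limited way: flipping an orientation only changes the sign of $G_{ij}$ in the formula. That is harmless on the cross blocks, but it must be matched on the $\tau$-block so that the matrix there is exactly $G$ rather than a signed variant.
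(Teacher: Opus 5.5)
Your proposal is correct and is essentially the paper's proof. Your $\Omega_{ij}$, normalized so that $\Omega_{ij}\wedge\tau_i\wedge\tau_j$ is the volume form, is $\star(\tau_i\wedge\tau_j)$, so $\Omega=\sum_{i<j}G_{ij}\Omega_{ij}$ is the paper's choice, and the block decomposition (with $P_\Omega$ acting by $G$ on $\Span\{\tau_k\}$, by $\Id+G_{ij}\star_{ij}$ on $U_i\wedge U_j$, and by $\Id$ on the rest) is the same. The orientation concern you flag is harmless, since $\tau_i\wedge\tau_j=\tau_j\wedge\tau_i$ means one normalization of $\Omega_{ij}$ gives both $K_{\Omega_{ij}}\tau_i=\tau_j$ and $K_{\Omega_{ij}}\tau_j=\tau_i$; your opening ``assume $N$ is even'' is also unnecessary, because block (4) already covers any complement of $\bigoplus_k U_k$.
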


\begin{proof}
The first part of the argument follows from \cite[Theorem 3.4]{SCPW}, as we now recall. The condition $\lambda_1\leq \lambda_2+\dots+\lambda_m$ is precisely the condition that $\{\lambda_1,\dots,\lambda_m\}$ are the lengths of a (degenerate) polygon in the sense that we can find\footnote{Indeed, it suffices to find $v_2,\dots, v_m$ with $\|\sum_{i=2}^m \lambda_i v_i \|=\lambda_1$. The set of possible values of $\|\sum_{i=2}^m \lambda_i  v_i \|$ with $ v_i$ unit forms an interval $I$ with upper limit $\sum_{i=2}^m \lambda_i$. We now consider two cases depending on $\lambda_2 \leq \lambda_3+\dots + \lambda_m$ (Case 1) or $\lambda_2 > \lambda_3+\dots + \lambda_m$ (Case 2). In Case 1, by induction on $m$ we can find $v_2,\dots,v_m$ so that $\sum_{i=2}^m \lambda_i  v_i  = 0$ so the lower limit of $I$ is $0$. In Case 2 we can take $ z_3=\dots= z_m = -  z_2$ to get $\|\sum_{i=2}^m \lambda_i z_i\|  = \lambda_2 - (\lambda_3 + \dots + \lambda_m) \leq \lambda_1$. In either case we find that $\lambda_1 \in I$. This proves the assertion.} unit vectors $v_1,\dots,v_m \in \R^2$ so that $\sum_{i=1}^m \lambda_i v_i = 0$. Let $g_{ij} = \bangle{v_i,v_j}$ be the coefficients of the Gram matrix $G$ of these vectors. Recall that $G\geq 0$ and note that by construction we have $G(\lambda_1,\dots,\lambda_m)^T = 0$. 

Let $\Omega = \star \sum_{i<j} g_{ij} \tau_i \wedge \tau_j$. We have
\[
\bangle{K_\Omega \alpha,\beta}  = \sum_{i<j} g_{ij} \bangle{\tau_i\wedge \tau_j,\alpha \wedge \beta}. 
\]
Let $\Vcal =\Span\{\tau_1,\dots,\tau_m\}$ and $\Ucal_{ij} : = U_i \wedge U_j$ for $i<j$ (spanned by forms $u_i\wedge u_j$, with $u_i \in U_i,u_j \in U_j$). Observe that $\Vcal$ is orthogonal to $\Ucal_{ij}$ and the $\Ucal_{ij}$ are pairwise orthogonal. In particular, we can find $\Wcal \subset \Lambda^2\R^N$ so that
\[
\Lambda^2 \R^N = \Vcal \oplus \left(\bigoplus_{i<j} \Ucal_{ij} \right) \oplus \Wcal
\]
is an orthogonal direct sum decomposition. It's straightforward to check that $P_\Omega$ preserves each of these summands, so we can analyze $P_\Omega$ restricted to each block. 

For $\tau_i \in \Vcal$, we have $P_\Omega \tau_i  = \sum_{j=1}^m g_{ij} \tau_j$. Thus $P_\Omega|_{\Vcal}$ has matrix (with respect to the basis $\tau_1,\dots,\tau_m$) equal to the Gram matrix $G$, which we recall is positive semi-definite. Note also that this implies that $P_\Omega \eta = 0$.  

We next consider $\Ucal_{ij}$. Let $\star_{ij}$ be the four-dimensional Hodge star operator $\Lambda^2 (U_i\oplus U_j) \to \Lambda^2 (U_i\oplus U_j) $ defined by the orientation $\tau_i\wedge \tau_j \in \Lambda^4(U_i \oplus U_j)$. For $\alpha,\beta \in \Ucal_{ij}$ we have
\[
\bangle{K_\Omega\alpha,\beta} = g_{ij} \bangle{\tau_i\wedge \tau_j, \alpha\wedge \beta} = g_{ij} \bangle{ \star_{ij} \alpha, \beta}
\]
so $P_\Omega|_{\Ucal_{ij}}= \Id_{\Ucal_{ij}} + g_{ij} \star_{ij}$. Using $|g_{ij}| \leq 1$,  we thus find $P_\Omega|_{\Ucal_{ij}} \geq 0$. 

Finally, it's easy to check that $K_\Omega|_{\Wcal} = 0$. Putting this together, we find that $P_\Omega \geq 0$ on $\Lambda^2 \R^N$ and (as observed above) we have $P_\Omega \eta = 0$. This completes the proof.
\end{proof}

\section{Trace identities for admissible pairings} \label{sec:trace}

In this section, we consider $N = n+1\geq 4$ and $M^n\subset \R^N$ a complete, embedded minimal hypersurface with finite total curvature and $\Ind(M) = 1$. We also fix an admissible pairing set $P_\Omega = \sum_{a=1}^r \Theta_a \otimes \Theta_a$ as in \Cref{defi:adm-pairing-set}, associated to $\Omega \in \Lambda^{N-4}\R^N$. 

For $\omega \in \Omega^1(M)$ we define $u^\omega_{a} = \bangle{\nu  \wedge \omega^\sharp ,\Theta_a}$ on $M$. We let $\Omega_M$ denote the pullback of $\Omega^\flat$ to $M$ and use $A\omega(X) = \omega(AX)$ to denote the induced action of the shape operator on $T^*M$.  The following lemma generalizes the calculations from \cite{Savo2010,AmbrozioCarlottoSharp2018,Li2017} (which only consider $\Theta_1,\dots,\Theta_r$ a basis of $\Lambda^2\R^N$) to admissible pairing sets.

\begin{lemma}\label{lemm:sumQua-terms}
Assume that $\omega$ is harmonic on $M$. Then
\[
\sum_{a=1}^r \left(|\nabla u^\omega_a|^2 - |A|^2 (u_a^\omega)^2\right) \dvol = \frac 12 \Delta |\omega|^2 \dvol + 2 (-1)^n d( \Omega_M \wedge \omega \wedge A \omega)
\]
along $M$. 
\end{lemma}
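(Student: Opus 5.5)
The computation will be carried out pointwise, splitting $P_\Omega = \Id + K_\Omega$. Set $\Phi := \nu\wedge\omega^\sharp$, viewed as a $\Lambda^2\R^N$-valued function on $M$. Then
\[
\sum_a (u^\omega_a)^2 = \bangle{P_\Omega \Phi,\Phi}, \qquad \sum_a |\nabla u^\omega_a|^2 = \sum_i \bangle{P_\Omega D_{e_i}\Phi, D_{e_i}\Phi},
\]
using that $\Theta_a$ is constant. Since $\Phi$ is decomposable, \Cref{lemm:P-decomp} gives $\bangle{P_\Omega\Phi,\Phi} = |\Phi|^2 = |\omega|^2$. So the zeroth-order term is just $-|A|^2|\omega|^2$, exactly as for an orthonormal basis of $\Lambda^2\R^N$. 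The gradient term splits as
\[
\sum_i |D_{e_i}\Phi|^2 + \sum_i \bangle{K_\Omega D_{e_i}\Phi, D_{e_i}\Phi}.
\]
The identity part is the one computed in \cite{Savo2010,Li2017}: $\sum_i |D_{e_i}\Phi|^2 - |A|^2|\omega|^2 = \frac12\Delta|\omega|^2$ for $\omega$ harmonic. I will re-derive it briefly. Write $D_{e_i}\Phi = -Ae_i\wedge\omega^\sharp + \nu\wedge(\nabla_{e_i}\omega^\sharp + A(e_i,\omega^\sharp)\nu)$, where the last $\nu$ term dies. Use $|\nabla\omega|^2 + \Ric(\omega,\omega) = \frac12\Delta|\omega|^2$ (Bochner, with $\Delta\omega=0$) and Gauss, $\Ric = -A^2$ since $H=0$. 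This reduces the identity to the check $|A|^2|\omega|^2 - |A\omega|^2 + |\nabla\omega|^2 - |A|^2|\omega|^2 = |\nabla\omega|^2 - |A\omega|^2$. So it remains to show
\[
\sum_i \bangle{K_\Omega D_{e_i}\Phi, D_{e_i}\Phi}\dvol = 2(-1)^n d(\Omega_M\wedge\omega\wedge A\omega).
\]

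For the $K_\Omega$ term, use \eqref{eq:K_omega-pairing}: $\bangle{K_\Omega\alpha,\alpha} = \star(\Omega\wedge\alpha\wedge\alpha)$. With $\alpha_i = D_{e_i}\Phi = -Ae_i\wedge\omega^\sharp + \nu\wedge\nabla_{e_i}\omega^\sharp$, the squared terms vanish because each piece is decomposable. What survives is the cross term
\[
-2\sum_i \star\big(\Omega\wedge Ae_i\wedge\omega^\sharp\wedge\nu\wedge\nabla_{e_i}\omega^\sharp\big).
\]
Contracting an $N$-form that contains $\nu$ and applying $\star$ converts this into an $n$-form on $M$ evaluated against $\dvol$. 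Concretely, $\star(\nu\wedge\beta) = \pm(\iota_\nu\text{-type})$, and the integrand equals $\pm 2\,\Omega_M\wedge\big(\sum_i e^i\circ A\big)\wedge\omega\wedge\nabla_{e_i}\omega$ as top forms on $M$. Rewriting $\sum_i Ae^i\wedge\nabla_{e_i}\omega$, the result should be the expression $\pm 2\,\Omega_M\wedge\omega\wedge d^\nabla$-type terms.

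Next I will expand $d(\Omega_M\wedge\omega\wedge A\omega)$. First, $d\Omega_M = 0$, since it is the pullback of a constant-coefficient form. Second, $d\omega = 0$. Third, $d(A\omega) = \sum_i e^i\wedge(\nabla_{e_i}A)\omega + \sum_i e^i\wedge A\nabla_{e_i}\omega$. The term with $\nabla A$ vanishes by Codazzi: $(\nabla_X A)Y$ is symmetric in $X,Y$, so $\sum_i e^i\wedge(\nabla_{e_i}A)\omega = \sum_{i,j}\omega((\nabla_{e_i}A)e_j)\,e^i\wedge e^j = 0$. Hence
\[
d(\Omega_M\wedge\omega\wedge A\omega) = (-1)^{N-3}\,\Omega_M\wedge\omega\wedge\sum_i e^i\wedge A\nabla_{e_i}\omega.
\]
Here $\sum_i e^i\wedge A\nabla_{e_i}\omega = \sum_{i,j}(\nabla_{e_i}\omega)(Ae_j)\,e^i\wedge e^j$. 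Using symmetry of $\nabla\omega$ (from $d\omega=0$) and of $A$, this matches $\sum_i Ae^i\wedge\nabla_{e_i}\omega$ up to sign. That gives the claimed identity with coefficient $2$, and the sign is fixed by the orientation convention $v_1,\dots,v_n,\nu$ oriented.

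The main obstacle is sign and orientation bookkeeping: converting $\star_{\R^N}$ of an $N$-form containing $\nu$ into a top form on $M$, and commuting $\Omega_M$ (degree $n-3$) past the 1-forms, which produces the $(-1)^n$. I would fix this by checking against the $N=4$, $\Omega=1$ case, where the formula should read $-2\,d(\omega\wedge A\omega)$ as in \eqref{eq:second-var-sum-self-dual}.
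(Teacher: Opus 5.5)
Your proposal follows the paper's proof essentially step for step: split $P_\Omega=\Id+K_\Omega$, use \Cref{lemm:P-decomp} to get $\sum_a (u^\omega_a)^2=|\omega|^2$, use Bochner plus Gauss for the identity part, and for the $K_\Omega$ part keep only the cross term of $D_{e_i}(\nu\wedge\omega^\sharp)$, identifying it with $\Omega_M\wedge\omega\wedge d(A\omega)$ via Codazzi and $d\omega=0$. The one loose end is the sign, which you should compute directly rather than calibrate against \eqref{eq:second-var-sum-self-dual}, since that display is itself a special case of this lemma; the computation goes as follows: moving $\nu$ into the last slot gives $-2\star(\Omega\wedge Ae_i\wedge\omega^\sharp\wedge\nu\wedge\nabla_{e_i}\omega^\sharp)\dvol=2\,\Omega_M\wedge\omega\wedge\nabla_{e_i}\omega\wedge(Ae_i)^\flat$, and $\sum_i e^i\wedge A\nabla_{e_i}\omega=\sum_i\nabla_{e_i}\omega\wedge(Ae_i)^\flat=d(A\omega)$ with no sign change, so your factor $(-1)^{N-3}=(-1)^n$ then yields exactly the stated identity.
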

\begin{proof}
We will work at $p\in M$. Fix $X,Y \in T_pM$ arbitrary and $e_1,\dots,e_n\in T_pM$ so that $e_1,\dots,e_n,\nu$ is an oriented orthonormal frame of $\R^N$. We compute 
\begin{equation}\label{eq:dAw-codazzi}
\begin{split}
d(A\omega)(X,Y) & = \nabla_{X}(A\omega)(Y) - \nabla_{Y}(A\omega)(X)\\
& = \omega((\nabla_{X}A) Y - (\nabla_{Y} A)X) + (\nabla_{X}\omega)(AY) - (\nabla_{Y}\omega)(AX)\\
& = (\nabla_{X}\omega)(AY) - (\nabla_{Y}\omega)(AX)\\
& = (\nabla_{AY}\omega)(X) - (\nabla_{AX}\omega)(Y)\\
& = \sum_{i=1}^n \left( \nabla_{e_i} \omega(X) \bangle{AY,e_i} - \nabla_{e_i} \omega(Y) \bangle{AX,e_i} \right)\\
& = \sum_{i=1}^n \left( \nabla_{e_i} \omega(X) \bangle{Ae_i,Y} - \nabla_{e_i} \omega(Y) \bangle{Ae_i,X}\right)\\
& = \left( \sum_{i=1}^n (\nabla_{e_i}\omega) \wedge (Ae_i)^\flat \right) (X,Y).
\end{split}
\end{equation}
We used the Codazzi equations in the third equality, $d\omega = 0$ in the fourth, and symmetry of the second fundamental form (shape operator) in the sixth. 

We now let $\beta = \nu \wedge \omega^\sharp$ and note that $\beta$ is decomposable at each point on $M$. Using \eqref{eq:D-nabla} we have 
\[
D_{e_j} \beta = - A e_j \wedge \omega^\sharp + \nu \wedge \nabla_{e_j} \omega^\sharp. 
\]
Note that $D_{e_j}\beta$ is a sum of two decomposable forms and that
\[
|Ae_j \wedge \omega^\sharp|^2 = |Ae_j|^2 |\omega|^2 - (\omega(Ae_j))^2.
\]
Using this, we find 
\begin{align*}
& \sum_{a=1}^r |\nabla u_a^\omega|^2 \dvol \\
& = \sum_{j=1}^n \bangle{P_\Omega D_{e_j} \beta,D_{e_j} \beta }\dvol  \\
& = \sum_{j=1}^n   |D_{e_j}\beta|^2 \dvol +  \sum_{j=1}^n\bangle{K_\Omega D_{e_j} \beta,D_{e_j} \beta}\dvol \\
& = (|\nabla \omega|^2  + |A|^2 |\omega|^2 -  |A\omega|^2) \dvol - 2 \star \left( \sum_{j=1}^n \Omega \wedge Ae_j \wedge \omega^\sharp \wedge \nu \wedge \nabla_{e_j}\omega^\sharp \right) \dvol\\
& = (|\nabla \omega|^2  + |A|^2 |\omega|^2 -  |A\omega|^2) \dvol + 2   \sum_{j=1}^n \Omega_M  \wedge \omega  \wedge  \nabla_{e_j}\omega\wedge (Ae_j)^\flat \\
& = (|\nabla \omega|^2  + |A|^2 |\omega|^2 -  |A\omega|^2) \dvol + 2    \Omega_M  \wedge \omega \wedge d(A \omega) \\
& = (|\nabla \omega|^2  + |A|^2 |\omega|^2 -  |A\omega|^2) \dvol + 2(-1)^n d(\Omega_M  \wedge \omega \wedge A \omega) ,
\end{align*}
where we used \eqref{eq:K_omega-pairing} and $\bangle{K_\Omega \eta,\eta} = 0$ for $\eta$ decomposable in the third equality, \eqref{eq:dAw-codazzi} in the second-to-last equality, and $d\Omega_M = d\omega = 0$ in the final equality.

On the other hand, we have
\[
\sum_{a=1}^r (u_a^\omega)^2 = \bangle{P_\Omega \beta,\beta} = |\beta|^2 = |\omega|^2,
\]
using \Cref{lemm:P-decomp}. Since $\Ric = - A^2$ by the Gauss equations, the Bochner formula gives 
\[
\frac 12 \Delta |\omega|^2 = |\nabla \omega|^2 - |A\omega|^2 . 
\]
Combining these expressions with the above calculation proves the assertion. 
\end{proof}

We recall that $Z$ is the dilation Jacobi field. Recall also the definitions of $\Bcal$ in \eqref{eq:Bcal-def}, index eigenfunction $\varphi$ in \Cref{lemm:index-eig}, and $\Hcal$ in \Cref{defi:Hcal}. In particular, we recall that $\omega \in \Hcal$ means that $\omega$ is harmonic on $M$ and there is $q\in \Pi$ so that $\omega^\sharp = h_\alpha q^\top + O(r^{1-n})$ along with derivatives along each end $E_\alpha$. We continue to assume that $\Theta_1,\dots,\Theta_r$ is an admissible pairing set associated to $\Omega \in \Lambda^{N-4} \R^N$. Write $c_a(q) : = \bangle{e_N\wedge q,\Theta_a}$. Note that
\begin{equation}\label{eq:sum-cq}
\sum_{a=1}^r |c_a(q)|^2 = \bangle{P_\Omega(e_N\wedge q),e_N\wedge q} = |q|^2,
\end{equation}
using \Cref{lemm:P-decomp}.

\begin{proposition}\label{prop:sum-Q-leq0}
Suppose that $\omega \in \Hcal$. Then $\tilde u_a^\omega : = u_a^\omega - c_a(q) Z \in \Bcal$ for $a =1,\dots,r$. Moreover,
\[
\sum_{a=1}^r \Qcal(\tilde u_a^\omega,\tilde u_a^\omega) = -  |q|^2(n-1) \int_{M} |\nabla x^N|^2 \leq 0. 
\]
\end{proposition}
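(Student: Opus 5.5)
First we establish membership in $\Bcal$. Along an end $E_\alpha$, \Cref{lemm:unit-normal-lim} gives $\nu = \sigma_\alpha e_N + O(r^{1-n})$, and the definition of $\Hcal$ gives $\omega^\sharp = h_\alpha q^\top + O(r^{1-n})$. Moreover $q^\top = q + O(r^{1-n})$, since $q\in\Pi$ and the normal is $e_N$ up to $O(r^{1-n})$. Hence
\[
u^\omega_a = \sigma_\alpha h_\alpha c_a(q) + O(r^{1-n}),
\]
with analogous estimates for derivatives. By \Cref{lemm:Z-est}, $Z = \sigma_\alpha h_\alpha + O(r^{2-n})$, so the difference $\tilde u^\omega_a$ is $O(r^{2-n})$ and its gradient is $O(r^{1-n})$. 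For the gradient I will differentiate using $D\nu = O(r^{-n})$ from \Cref{lemm:sff-decay} together with $\nabla\omega = O(r^{-n})$. Then \Cref{lemm:f-in-B-from-decay} gives $\tilde u^\omega_a\in\Bcal$, and for such functions $\Qcal(\tilde u,\tilde u)=\Qcal_\infty(\tilde u,\tilde u)$ (integrability).

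Next I expand
\[
\Qcal_\infty(\tilde u_a,\tilde u_a)=\Qcal_\infty(u_a,u_a)-2c_a\Qcal_\infty(u_a,Z)+c_a^2\Qcal_\infty(Z,Z).
\]
For this I must check that each limit exists. On $M\cap B_R$, I integrate by parts using $LZ=0$:
\[
\Qcal_R(u_a,Z)=\int_{M\cap\partial B_R} u_a\nabla_\vartheta Z .
\]
Using \Cref{lemm:Z-est} and \Cref{lemm:conormal}, $\nabla_\vartheta Z = \sigma_\alpha (n-1)(2-n)a_\alpha R^{1-n}+O(R^{-n})$, while $u_a = \sigma_\alpha h_\alpha c_a + O(R^{1-n})$. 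So
\[
\Qcal_R(u_a,Z)= c_a \sum_\alpha (n-1)(2-n)h_\alpha a_\alpha R^{1-n}|E_\alpha\cap\partial B_R| + o(1) = c_a\,\Qcal_R(Z,Z)+o(1),
\]
comparing with the identical boundary computation for $Z\nabla_\vartheta Z$ in \Cref{cor:QinfZ}. It is cleaner to observe that the boundary integrals $\int u_a\nabla_\vartheta Z$ and $c_a\int Z\nabla_\vartheta Z$ differ by $O(R^{-n})|M\cap\partial B_R|\to0$. Hence $\Qcal_\infty(u_a,Z)=c_a\Qcal_\infty(Z,Z)$, and
\[
\sum_a\Qcal(\tilde u_a,\tilde u_a)=\sum_a\Qcal_\infty(u_a,u_a)-\sum_a c_a^2\,\Qcal_\infty(Z,Z).
\]
By \eqref{eq:sum-cq} and \Cref{cor:QinfZ}, the last term is $|q|^2(n-1)\int_M|\nabla x^N|^2$.

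It remains to show $\sum_a\Qcal_\infty(u_a,u_a)=0$, which is the main step. I integrate \Cref{lemm:sumQua-terms} over $M\cap B_R$ and apply Stokes:
\[
\sum_a\Qcal_R(u_a,u_a)=\int_{M\cap\partial B_R}\Big(\tfrac12\nabla_\vartheta|\omega|^2\Big)+2(-1)^n\int_{M\cap\partial B_R}\Omega_M\wedge\omega\wedge A\omega.
\]
For the first term, $|\omega|^2 = h_\alpha^2|q^\top|^2+O(r^{1-n})$ with $|q^\top|^2=|q|^2-\bangle{q,\nu}^2$ and $\bangle{q,\nu}=O(r^{1-n})$. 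So $\nabla|\omega|^2 = O(r^{-n})$, and since $\nabla(\omega-h_\alpha q^\top)=O(r^{-n})$ and $\nabla q^\top = O(|A|)=O(r^{-n})$, the boundary integral is $O(R^{-n}R^{n-1})\to0$. For the second term, $|A\omega|=O(r^{-n})$ and $|\omega|,|\Omega_M|$ are bounded, so the integrand is $O(R^{-n})$ on a set of area $O(R^{n-1})$ by \Cref{lemm:vol-grow}, and it also vanishes in the limit. Thus $\sum_a\Qcal_\infty(u_a,u_a)=0$, giving the identity, and nonpositivity follows. The main obstacle is bookkeeping the error orders, especially verifying that the gradient of $\tilde u_a$ is $O(r^{1-n})$, which needs $\nabla(\omega^\sharp)$ and $D\nu$ decay at rate $r^{-n}$.
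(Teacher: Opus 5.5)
Your proposal is correct and follows essentially the same route as the paper. You establish $\tilde u^\omega_a = O(r^{2-n})$ and $\nabla \tilde u^\omega_a = O(r^{1-n})$, then integrate \Cref{lemm:sumQua-terms} over $M\cap B_R$ and use $LZ=0$ to reduce everything to boundary terms that vanish as $R\to\infty$. The only difference is organizational: the paper treats $\sum_a \Qcal_R(\tilde u^\omega_a,\tilde u^\omega_a) + |q|^2\Qcal_R(Z,Z)$ in one boundary computation, whereas you separately show $\Qcal_\infty(u^\omega_a,Z) = c_a(q)\Qcal_\infty(Z,Z)$ and $\sum_a \Qcal_\infty(u^\omega_a,u^\omega_a) = 0$.
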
 
\begin{proof}
Combine $\omega\in \Hcal$ with \Cref{lemm:unit-normal-lim} to deduce that 
\[
\nabla^\ell(u^\omega_a - \sigma_\alpha h_\alpha c_a(q)) = O(r^{1-n-\ell}).
\]
On the other hand, \Cref{lemm:Z-est} gives
\[
\nabla^\ell(Z- \sigma_\alpha h_\alpha) = O(r^{2-n-\ell}).
\]
Thus $\nabla^\ell \tilde u^\omega_a = O(r^{2-n-\ell})$ and hence $\tilde u^\omega_a \in \Bcal$ by \Cref{lemm:f-in-B-from-decay}. 

Using \eqref{eq:sum-cq}, we compute
\begin{align*}
& \sum_{a=1}^r \Qcal_R(\tilde u^\omega_a,\tilde u^\omega_a) + |q|^2 \Qcal_R(Z,Z)  \\
& = \sum_{a=1}^r (\Qcal_R( u^\omega_a, u^\omega_a) - 2 c_a(q) \Qcal_R(\tilde u^\omega_a,Z) )\\
& = \int_{M\cap\partial B_R}\left(  \frac 12 \nabla_\vartheta |\omega|^2 - 2 \sum_{a=1}^r c_a(q) \tilde u^\omega_a \nabla_\vartheta Z\right) + 2(-1)^n \int_{M\cap \partial B_R} (\Omega_M \wedge \omega\wedge A\omega)|_{M \cap \partial B_R} \\
& = O(R^{-1}) ,
\end{align*}
where we used \Cref{lemm:sumQua-terms} and $LZ=0$ in the second equality and then \Cref{lemm:vol-grow,lemm:sff-decay,lemm:Z-est} as well as \Cref{defi:Hcal} in the final equality. The assertion then follows from taking $R\to\infty$ and using \Cref{cor:QinfZ}. 
\end{proof}

\begin{corollary}\label{cor:u-a-sum-nullity}
Suppose that $\omega \in \Hcal$ and fix the associated $q \in \Pi$. Then $\Qcal_\infty(u^\omega_a,\varphi)$ exists for all $a=1,\dots,r$. Moreover, if  $\Qcal_\infty(u^\omega_a,\varphi) = 0$ for all $a =1,\dots,r$ then $\omega \in \Hcal_0$ and $L u^\omega_a = 0$ for all $a = 1,\dots,r$. 
\end{corollary}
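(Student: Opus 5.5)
Write $u_a = u^\omega_a = \tilde u^\omega_a + c_a(q) Z$, with $\tilde u^\omega_a\in\Bcal$ by \Cref{prop:sum-Q-leq0}. The plan has three steps: show $\Qcal_\infty(u_a,\varphi)$ exists and rewrite the orthogonality hypothesis as $\Qcal(\tilde u^\omega_a,\varphi) = -c_a(q)\,\Qcal_\infty(Z,\varphi)$; use \Cref{lemm:index-eig} to get a positive lower bound on $\sum_a \Qcal(\tilde u^\omega_a,\tilde u^\omega_a)$; then compare with \Cref{prop:sum-Q-leq0} and \Cref{cor:QinfZ} to force $q=0$.

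\textbf{Existence and the key identity.} Since $\tilde u^\omega_a,\varphi\in\Bcal$, the value $\Qcal(\tilde u^\omega_a,\varphi)$ is defined and equals $\lim_R \Qcal_R(\tilde u^\omega_a,\varphi)$ by dominated convergence. For $Z$ we have $LZ=0$, so
\[
\Qcal_R(Z,\varphi) = \int_{M\cap\partial B_R} \varphi\,\nabla_\vartheta Z = \int_{M\cap\partial B_R} Z\,\nabla_\vartheta\varphi - \lambda\int_{M\cap B_R} Z\varphi .
\]
I would show the boundary term tends to $0$ using decay of $\varphi$. Since $L\varphi = -\lambda\varphi$ with $-\lambda>0$ and $|A|^2\to0$, the function $\varphi$ should decay exponentially; at least, $\varphi\in W^{1,2}$ together with local elliptic estimates should give integrable decay along some sequence $R_j\to\infty$. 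Because $Z$ is bounded, $Z\varphi\in L^1$ then follows, and $\Qcal_\infty(Z,\varphi) = -\lambda\int_M Z\varphi$. Hence $\Qcal_\infty(u_a,\varphi)$ exists. The main technical point of this step is establishing the decay of $\varphi$: I would run a barrier or Agmon-type argument for $(\Delta+|A|^2+\lambda)\varphi=0$ on each end. A weaker alternative is to choose good radii and use only that $\nabla\varphi\in L^2$ and $\varphi\in L^2$. With the limit in hand, the hypothesis becomes $\Qcal(\tilde u^\omega_a,\varphi) = -c_a(q)\,\Qcal_\infty(Z,\varphi)$.

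\textbf{Forcing $q=0$.} Set $f_a = \tilde u^\omega_a - \mu c_a(q)\varphi$ with $\mu = -\Qcal_\infty(Z,\varphi)/\lambda$, where $\lambda = \Qcal(\varphi,\varphi)$. Then $\Qcal(f_a,\varphi) = -c_a(q)\Qcal_\infty(Z,\varphi) - \mu c_a(q)\lambda = 0$, so \Cref{lemm:index-eig} gives $\Qcal(f_a,f_a)\ge0$. Expanding,
\[
\Qcal(\tilde u^\omega_a,\tilde u^\omega_a) = \Qcal(f_a,f_a) + 2\mu c_a(q)\Qcal(\tilde u^\omega_a,\varphi) - \mu^2 c_a(q)^2\lambda \ge \mu^2 c_a(q)^2 (-\lambda) \ge 0 .
\]
Here the middle term equals $2\mu^2 c_a(q)^2\lambda$, and $\lambda<0$. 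Summing over $a$ and using \eqref{eq:sum-cq} gives $\sum_a \Qcal(\tilde u^\omega_a,\tilde u^\omega_a)\ge -\lambda\mu^2|q|^2\ge0$. On the other hand, \Cref{prop:sum-Q-leq0} says this sum equals $-|q|^2(n-1)\int_M|\nabla x^N|^2$, which is strictly negative unless $q=0$, since $M$ is non-flat (\Cref{cor:QinfZ}). Therefore $q=0$, i.e.\ $\omega\in\Hzero$.

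\textbf{Vanishing of $Lu_a$.} With $q=0$, every $c_a(q)$ vanishes, so $u_a = \tilde u^\omega_a\in\Bcal$, each $\Qcal(u_a,u_a)\ge0$, and the sum is $0$ by \Cref{prop:sum-Q-leq0}. Hence each $\Qcal(u_a,u_a)=0$ with $\Qcal(u_a,\varphi)=0$, and the equality case of \Cref{lemm:index-eig} gives $Lu^\omega_a=0$ for every $a$.
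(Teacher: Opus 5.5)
There is a genuine gap in your step ``Forcing $q=0$'': the inequality has the wrong sign. Since $\Qcal(f_a,\varphi)=0$, your own expansion gives
\[
\Qcal(\tilde u^\omega_a,\tilde u^\omega_a) = \Qcal(f_a,f_a) + 2\mu^2 c_a(q)^2\lambda - \mu^2 c_a(q)^2\lambda = \Qcal(f_a,f_a) + \mu^2 c_a(q)^2 \lambda \geq \mu^2 c_a(q)^2\lambda .
\]
Because $\lambda<0$, this lower bound is nonpositive, not $\mu^2c_a(q)^2(-\lambda)$. That is what one should expect: removing a component along the negative direction $\varphi$ can only lower $\Qcal$. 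Summing, you only get $\sum_a \Qcal(\tilde u^\omega_a,\tilde u^\omega_a)\geq \lambda\mu^2|q|^2$. When $\mu\neq 0$ this is compatible with the value $-|q|^2(n-1)\int_M|\nabla x^N|^2$ from \Cref{prop:sum-Q-leq0}, so no contradiction follows. The argument closes only if $\mu=0$, i.e.\ $\Qcal_\infty(Z,\varphi)=0$, and you never establish this. Your formula $\Qcal_\infty(Z,\varphi)=-\lambda\int_M Z\varphi$ leaves that value undetermined; it should also read $+\lambda\int_M Z\varphi$, since $L\varphi=-\lambda\varphi$.

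The missing fact is that $Z$ is $\Qcal_\infty$-orthogonal to all of $\Bcal$, and proving it needs no decay of $\varphi$. Existence of the limit is immediate: $\|Z\|_\Bcal<\infty$ and $\varphi\in\Bcal$, so by Cauchy--Schwarz the integrand of $\Qcal(Z,\varphi)$ is absolutely integrable. To see the limit vanishes, take $f_j\in C^\infty_c(M)$ with $f_j\to\varphi$ in $\Bcal$. Then $\Qcal_R(Z,f_j)=0$ for $R$ large because $LZ=0$, and $|\Qcal_R(Z,\varphi-f_j)|\leq\|Z\|_\Bcal\|\varphi-f_j\|_\Bcal$. Alternatively, your first identity $\Qcal_R(Z,\varphi)=\int_{M\cap\partial B_R}\varphi\,\nabla_\vartheta Z$ already does the job: combine $|\nabla Z|=O(R^{1-n})$, $|M\cap\partial B_R|=O(R^{n-1})$ and $\varphi\in L^2$ to get vanishing along a sequence of radii, which suffices once the limit is known to exist. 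With $\mu=0$ you have $f_a=\tilde u^\omega_a$ and the hypothesis becomes $\Qcal(\tilde u^\omega_a,\varphi)=0$. Then \Cref{lemm:index-eig} gives $\sum_a\Qcal(\tilde u^\omega_a,\tilde u^\omega_a)\geq 0$, and comparison with \Cref{prop:sum-Q-leq0} forces $q=0$. Your final paragraph then goes through unchanged, and the whole argument becomes the paper's proof.
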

\begin{proof}
For $\tilde u^\omega_a \in \Bcal$ as in \Cref{prop:sum-Q-leq0} we have
\[
\Qcal_R(u^\omega_a,\varphi) = \Qcal_R(\tilde u^\omega_a,\varphi) + c_a(q) \Qcal_R(Z,\varphi). 
\]
The first term limits to $\Qcal(\tilde u_a^\omega,\varphi)$. For the second term, assume that $f_j \in C^\infty_c(M)$ have $f_j \to \varphi$ in $\Bcal$. 
\[
\limsup_{R\to\infty} |\Qcal_R(Z,\varphi)| = \limsup_{R\to\infty} |\Qcal_R(Z,\varphi - f_j)| \leq \|Z\|_\Bcal \|\varphi - f_j\|_\Bcal,
\]
which implies that $\Qcal_\infty(Z,\varphi) = 0$ and in particular that $\Qcal_\infty(u^\omega_a,\varphi)$ exists and is equal to $\Qcal(\tilde u^\omega_a,\varphi)$.  

As such, if we have  $\Qcal_\infty(u^\omega_a,\varphi)=0$ for $a=1,\dots,r$, then $\Qcal(\tilde u^\omega_a,\varphi) = 0$, so \Cref{lemm:index-eig} implies that $\Qcal(\tilde u^\omega_a,\tilde u^\omega_a) \geq 0$ with equality if and only if $L\tilde u^\omega_a = 0$. Combined with \Cref{prop:sum-Q-leq0} we get
\[
0 \leq \sum_{a=1}^r \Qcal(\tilde u_a^\omega,\tilde u_a^\omega) = -  |q|^2(n-1) \int_{M} |\nabla x^N|^2. 
\]
Since $\int_M |\nabla x^N|^2>0$, this implies that $q=0$ and $Lu^\omega_a =0$.
\end{proof}

\section{Nullity characterization}\label{sec:nullity}

In this section we consider $N = n+1\geq 4$ and $M^n\subset \R^N$ a complete, connected, embedded minimal hypersurface with finite total curvature and $\Ind(M) = 1$. We also fix an admissible pairing set $P_\Omega = \sum_{a=1}^r \Theta_a \otimes \Theta_a$ as in \Cref{defi:adm-pairing-set}, associated to $\Omega \in \Lambda^{N-4}\R^N$ and continue to write $u^\omega_a = \bangle{\nu \wedge \omega^\sharp ,\Theta_a}$. We will 
define the two-form
\begin{equation}\label{eq:defn-B}
B_\omega = \sum_{j=1}^n (Ae_j)^\flat \wedge \nabla_{e_j} \omega . 
\end{equation}
We define 
\[
\Kcal_\Omega = \{\omega \in \Hcal_0 : Lu^\omega_a = 0 \textrm{ for all }a = 1,\dots,r\}.
\] 
The goal of this section is to improve the estimate $\dim\Kcal_\Omega \leq 2n-1$ from \cite[\S 5]{Li2017} as follows. 
\begin{proposition}\label{prop:K-Omega}
$ \Kcal_\Omega = \Span\{dx^N\}$.
\end{proposition}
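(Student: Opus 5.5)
The inclusion $\Span\{dx^N\}\subset\Kcal_\Omega$ is direct. Write $e_N^\top$ for the tangential part of $e_N$ along $M$. The restriction $\omega=dx^N$ is harmonic, since coordinate functions are harmonic on a minimal hypersurface. By \Cref{lemm:Z-est} it satisfies $\omega^\sharp=\nabla x^N=O(r^{1-n})$ with the corresponding derivative bounds, so $dx^N\in\Hzero$. Moreover $\nu\wedge\omega^\sharp=\nu\wedge e_N^\top=\nu\wedge e_N$. Hence $u^{dx^N}_a=\bangle{\nu\wedge e_N,\Theta_a}$ is the normal component of the ambient rotation Killing field associated to $\Theta_a$, and so it is a Jacobi field: $Lu^{dx^N}_a=0$.

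For the reverse inclusion I would first compute $Lu^\omega_\Theta$ for a general harmonic $\omega$ and a general $\Theta\in\Lambda^2\R^N$, as in Savo and Li. Differentiating $\beta=\nu\wedge\omega^\sharp$ twice using \eqref{eq:D-nabla}, and then using the Codazzi equations, minimality, and the Bochner identity $\Delta\omega=\Ric\,\omega=-A^2\omega$ (rough Laplacian), I expect
\[
L u^\omega_\Theta=\bangle{-2B_\omega^\sharp+\nu\wedge V_\omega,\Theta}
\]
for an explicit tangent vector field $V_\omega$ built from $A$, $\nabla A$ and $\omega$. Summing over the admissible pairing set, the condition $\omega\in\Kcal_\Omega$ becomes the pointwise linear system
\[
P_\Omega\bigl(-2B_\omega^\sharp+\nu\wedge V_\omega\bigr)=0 \quad\text{on } M .
\]
Since $P_\Omega\geq 0$ may have a kernel, this is weaker than Li's system. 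A purely local argument therefore cannot work (compare the Clifford cone remark), and I would instead exploit the asymptotics along an end.

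The asymptotic analysis runs as follows. On an end $E_\alpha$, write $\omega=df$ with $f$ harmonic, so that $f=c_\alpha+b_\alpha r^{2-n}+(\text{higher multipoles})$ by \Cref{lemm:harmonic-expansion}. The end itself is the graph of $v_\alpha=h_\alpha+a_\alpha|y|^{2-n}+\cdots$, so $A\approx\sigma_\alpha D^2v_\alpha$ and $\nu\approx\sigma_\alpha e_N$. I would insert these expansions into the system above and extract the leading-order terms in $r$. These are explicit $2$-forms on $\Pi$ built from $D^2|y|^{2-n}$ and from the Hessians of the multipole terms of $f$, wedged together (the $\nu\wedge V$ part decays faster or lies in $e_N\wedge\Pi$). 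Requiring $P_\Omega$ to annihilate them for all directions $y/|y|$ should force the multipole expansion of $f$ to be proportional to that of $v_\alpha$. Concretely, I expect to show $f-\lambda_\alpha x^N=\text{const}+O(r^{-m})$ for every $m$, so that $\omega-\lambda_\alpha dx^N$ vanishes to infinite order at infinity on $E_\alpha$. Here I would choose an end with $a_\alpha\neq 0$, which should exist for non-flat $M$ with more than one end by the flux/balancing of $\nabla x^N$. The key input is that a quadratic form in $y$ contracted against $P_\Omega$ over all directions cannot vanish unless the $2$-form vanishes, because $P_\Omega$ is the identity on decomposable forms (\Cref{lemm:P-decomp}).

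Finally, $\tilde\omega=\omega-\lambda dx^N$ is harmonic and satisfies the same linear overdetermined system, while vanishing to infinite order at infinity on one end. I would apply a Kelvin-type inversion or a Carleman/unique continuation argument for the elliptic system $d\tilde\omega=d^*\tilde\omega=0$ on the end, conclude $\tilde\omega\equiv 0$ there, and then conclude $\tilde\omega\equiv 0$ on all of $M$ by analyticity and connectedness. I expect the main obstacle to be the middle step: carrying out the leading-order computation carefully enough to handle the kernel of $P_\Omega$ and ends with $a_\alpha=0$. At each order, the obstruction must be proportional to a decomposable form, and that is what allows \Cref{lemm:P-decomp} to be used.
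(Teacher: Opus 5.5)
Your outline can be completed, and it follows a route genuinely different from the paper's. As written, though, the step that carries all the content is only conjectured, and the principle you cite for it is false as stated. $P_\Omega$ can have a large kernel (all anti-self-dual forms when $N=4$, $\Omega=1$), so a $2$-form-valued function can lie in $\ker P_\Omega$ in every direction without vanishing.

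What you actually need is that the leading term of $B_\omega$ is \emph{pointwise decomposable}. Your choice of an end with $a_\alpha\neq0$ gives exactly this, though you do not make the connection, and ends with $a_\alpha=0$ never need to be treated.
\begin{itemize}
\item Normalize that end to height $0$. We have $D^2|y|^{2-n}=(2-n)|y|^{-n}(I-n\,\hat y\otimes\hat y)$, a multiple of the identity plus a rank-one term.
\item $B_\omega$ is the commutator of $A\approx\sigma_\alpha D^2v_\alpha$ with $\nabla^2 f_\omega$ (\Cref{lemm:rhs-L-u}). So its leading term is a nonzero multiple of $|y|^{-n}(D^2f_k\,\hat y)\wedge\hat y$, where $f_k$ is the leading homogeneous term of $f_\omega$ from \Cref{lemm:harmonic-expansion}.
\item Euler's relation $D^2f_k\,y=(1-n-k)Df_k$ turns this into $c\,|y|^{-n-2}\,df_k\wedge y^\flat$ with $c\neq0$.
\item Pair $P_\Omega(B_\omega^\sharp)=0$ against this term. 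By \Cref{lemm:P-decomp} and homogeneity, $df_k\wedge dr\equiv0$. So $f_k$ is radial, hence $k=0$ and $f_k=b|y|^{2-n}$.
\item Now $\tilde\omega=\omega-(b/a_\alpha)\,dx^N\in\Kcal_\Omega$ has $f_{\tilde\omega}=o(r^{2-n})$. If $f_{\tilde\omega}\not\equiv0$, its leading term would have $k\geq1$ and, by the same argument, be radial, which is impossible. So $\tilde\omega=0$ on the end, hence on $M$ by unique continuation.
\end{itemize}
No infinite-order iteration or Carleman argument is needed; the dichotomy in \Cref{lemm:harmonic-expansion} already does that.

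The paper instead works on an arbitrary end, whose graph function may have leading term $v_m$ of any order. There $\Delta(df_k\wedge dv_m)$ is generally not decomposable. So it uses \Cref{lemm:model-problem-harmonic}: the restriction of $\Pscr(df_k\wedge dv_m)$ to $S^{n-1}$ is both a spherical harmonic of degree $n+k+m$ and a combination of spherical harmonics of degree $\leq k+m+2$, hence vanishes. Decomposability is then applied to $df_k\wedge dv_m$, not to the obstruction, giving $f_k=\lambda v_m$. The proof concludes via the injective linear map $\omega\mapsto\lambda_\omega\in\R$. Your route avoids the degree count but needs a global input to select a good end. The paper's lemma works on any end, which is also how it is used in \Cref{sec:gen}.

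Smaller corrections:
\begin{itemize}
\item An end with $a_\alpha\neq0$ does not follow from flux, which only gives $\sum_\alpha a_\alpha=0$. It follows from the boundary computation in the proof of \Cref{cor:QinfZ}, which expresses $\int_M|\nabla x^N|^2>0$ as a multiple of $\sum_\alpha h_\alpha a_\alpha$.
\item The term $\nu\wedge V_\omega$ is identically zero by \Cref{lemm:rhs-L-u}. This matters: a same-order term in $e_N\wedge\Pi$ would not be harmless, since $K_\Omega$ mixes $\Lambda^2\Pi$ and $e_N\wedge\Pi$ and would spoil the decomposability argument.
\item $\bangle{\nu\wedge e_N,\Theta_a}$ is the normal part of a translation, not a rotation. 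The conclusion $Lu^{dx^N}_a=0$ is unaffected.
\end{itemize}
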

Before proving this we need several preliminaries. We first recall the following calculation (which is based on \cite[Lemma 2.4]{Savo2010}):
\begin{lemma}[{\cite[(4.1) and Remark 4.5]{Li2017}}]\label{lemm:rhs-L-u}
Fix $e_1,\dots,e_n\in T_p M$ so that $e_1,\dots,e_n,\nu$ is an oriented orthonormal basis. If $\omega$ is harmonic on $M$, then $L u^\omega_a =  - 2 \bangle{B_\omega^\sharp,\Theta_a}$. 
If $\omega = df$ on some open $U\subset M$ then 
\[
B_\omega(X,Y) = \nabla^2 f(AX,Y) - \nabla^2 f(AY,X)
\]
for $X,Y$ tangent to $M$.  
\end{lemma}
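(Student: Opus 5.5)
The plan is to compute $L$ applied to the $\Lambda^2\R^N$-valued function $\beta = \nu\wedge\omega^\sharp$ along $M$. Because each $\Theta_a$ is a constant element of $\Lambda^2\R^N$, we have $Lu^\omega_a = \bangle{L\beta,\Theta_a}$. So the first claim is equivalent to the pointwise identity
\[
\Delta \beta + |A|^2\beta = -2B_\omega^\sharp ,
\]
where $B_\omega^\sharp = \sum_j Ae_j\wedge \nabla_{e_j}\omega^\sharp$ is viewed in $\Lambda^2\R^N$. Here $\Delta$ is the Laplacian of $M$ acting componentwise on $\R^N$-valued functions. It equals $\sum_j (D_{e_j}D_{e_j} - D_{\nabla_{e_j}e_j})$ in a frame, and we take the frame normal at $p$. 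The Leibniz rule gives
\[
\Delta\beta = \Delta\nu\wedge\omega^\sharp + 2\sum_j D_{e_j}\nu\wedge D_{e_j}\omega^\sharp + \nu\wedge\Delta\omega^\sharp ,
\]
and the three terms are handled separately.

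For the first term, the standard consequence of minimality and Codazzi is $\Delta\nu = -|A|^2\nu$. Hence this term contributes exactly $-|A|^2\beta$.

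For the third term, I would compute $\Delta\omega^\sharp$ as an $\R^N$-valued function using \eqref{eq:D-nabla}, which gives $D_X\omega^\sharp = \nabla_X\omega^\sharp + A(X,\omega^\sharp)\nu$. Differentiating again gives two parts.
\begin{itemize}
\item The tangential part is the rough Laplacian $\nabla^*\nabla$-term $\sum_j\nabla_{e_j}\nabla_{e_j}\omega^\sharp$, plus the term $-A(e_j,\omega^\sharp)Ae_j$, which sums to $-A^2\omega^\sharp$. By the Bochner formula for the harmonic form $\omega$, together with $\Ric=-A^2$ from the Gauss equation, the rough Laplacian equals $\Ric(\omega^\sharp) = -A^2\omega^\sharp$. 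The total tangential part is therefore $-2A^2\omega^\sharp$.
\item The normal part is $\operatorname{div}(A\omega^\sharp) + \sum_j\bangle{\nabla_{e_j}\omega^\sharp,Ae_j}$. It is some scalar multiple of $\nu$, which Codazzi plus $H=0$ identifies as $2\bangle{\nabla\omega,A}$, but its value is irrelevant because it is wedged against $\nu$ and vanishes.
\end{itemize}
The third term is therefore $-2\,\nu\wedge A^2\omega^\sharp$.

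For the cross term, substitute $D_{e_j}\nu = -Ae_j$ to get
\[
-2\sum_j Ae_j\wedge\big(\nabla_{e_j}\omega^\sharp + A(e_j,\omega^\sharp)\nu\big) = -2B_\omega^\sharp + 2\,\nu\wedge A^2\omega^\sharp ,
\]
using $\sum_j A(e_j,\omega^\sharp)Ae_j = A^2\omega^\sharp$. The $A^2$-terms from the cross term and the third term cancel, and summing the three contributions yields the identity above.

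For the second assertion, write $\nabla_{e_j}\omega = \nabla^2 f(e_j,\cdot)$ and expand $B_\omega(X,Y) = \sum_j \big(\bangle{Ae_j,X}\nabla^2f(e_j,Y) - \bangle{Ae_j,Y}\nabla^2 f(e_j,X)\big)$. Using the symmetry of $A$, this becomes $\sum_j \bangle{e_j,AX}\nabla^2 f(e_j,Y) - \dots$, which collapses to $\nabla^2 f(AX,Y) - \nabla^2f(AY,X)$.

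The main obstacle is bookkeeping rather than ideas. One must get the tangential part of $\Delta\omega^\sharp$ right, in particular the precise use of Bochner with $\Ric=-A^2$, so that the $A^2$-terms cancel exactly. One must also keep the sign and normalization conventions for the wedge and for $\flat/\sharp$ on $\Lambda^2$ consistent with \eqref{eq:inner-prod-forms}, since these produce the factor $-2$.
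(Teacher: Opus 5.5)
Your proof is correct. The Leibniz expansion of $\Delta(\nu\wedge\omega^\sharp)$ combines three facts: $\Delta\nu=-|A|^2\nu$; the tangential part $\operatorname{tr}\nabla^2\omega^\sharp-A^2\omega^\sharp=-2A^2\omega^\sharp$ of $\Delta\omega^\sharp$ (from Bochner together with $\Ric=-A^2$); and the cancellation of that term against the $+2\,\nu\wedge A^2\omega^\sharp$ produced by the cross term. Together these give exactly $L\beta=-2B_\omega^\sharp$, and the second formula follows from the symmetry of $A$ as you say. One notational point: $\nabla^*\nabla$ usually denotes $-\operatorname{tr}\nabla^2$, whereas the term you mean is $\operatorname{tr}\nabla^2\omega^\sharp$.

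The paper does not reprove this lemma; it cites Li (following Savo), where the identity is obtained by essentially the same computation of Laplacians of ambient-vector-valued functions.
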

In particular, if $\omega \in \Kcal_\Omega$ then $P_\Omega ((B_\omega)^\sharp) = 0$ along $M$. 

\begin{remark}
The $2$-form $B_\omega$ can be regarded as the commutator of $\nabla^2f$ and $A$. As we will see, up to a harmless factor, $A$ is  equal to $\nabla^2 x^N$. Both $f$ and $x^N$ are harmonic on $M$. It will be useful to consider the model case: $\phi,\psi$ are harmonic on $\R^n \setminus B$ and decay to $0$ at infinity. If the commutator of their Hessians satisfies $[(D^2\phi)^\sharp,(D^2\psi)^\sharp]=0$ then we claim that $\phi,\psi$ are linearly dependent. (Note that the decay at infinity is essential here as witnessed by $\phi(y)=y^1,\psi(y)=y^2$.) The idea of the proof is to reduce to the case that $\phi,\psi$ are homogeneous. Then by considering $\Delta (d\phi \wedge d\psi) = [(D^2\phi)^\sharp,(D^2\psi)^\sharp]=0$ we can obtain a contradiction by considering the angular behavior of $d\phi \wedge d\psi$ (cf.\ \Cref{lemm:lambda-omega} below). 
\end{remark}

We being with several preliminary results.

\begin{lemma}\label{lemm:deriv-harmonic-degree-up}
Assume that $s \geq 0$ is an integer and $\phi$ is a $(2-n-s)$-homogeneous harmonic function on $\R^n\setminus\{0\}$. Then $\phi(y) = r^{2-n-2s}\Phi(y)$ for $\Phi$ a $s$-homogeneous harmonic polynomial and 
\[
d\phi = r^{1-n-s} \widehat \Phi(y/|y|) = r^{-n-2s}\widehat \Phi(y)
\]
where $\widehat \Phi$ is a $\Lambda^1\R^n$-valued polynomial of degree $\leq s+1$. 
\end{lemma}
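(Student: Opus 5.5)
The plan is to use the Kelvin transform to convert decaying homogeneous harmonic functions into harmonic polynomials, and then to differentiate explicitly.

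\emph{Step 1: Kelvin transform.} For a function $u$ on $\R^n\setminus\{0\}$, set $K[u](y) = |y|^{2-n}u(y/|y|^2)$. This transform is an involution and preserves harmonicity on $\R^n\setminus\{0\}$. If $\phi$ is $(2-n-s)$-homogeneous, then
\[
K[\phi](y) = |y|^{2-n}\,|y|^{-2(2-n-s)}\phi(y) = |y|^{n-2+2s}\phi(y),
\]
and this is $s$-homogeneous. So $\Phi := r^{n-2+2s}\phi$ is an $s$-homogeneous harmonic function on $\R^n\setminus\{0\}$, and $\phi = r^{2-n-2s}\Phi$ as claimed.

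\emph{Step 2: $\Phi$ is a polynomial.} $\Phi$ is bounded near the origin by $C|y|^s$ with $s\geq 0$. Since $n\geq 3$, an isolated singularity of a harmonic function that is $o(|y|^{2-n})$ is removable, so $\Phi$ extends harmonically across $0$. Alternatively, one can restrict to the sphere and use that $\Phi|_{S^{n-1}}$ is a spherical harmonic of degree $s$; the other homogeneity $2-n-s$ is excluded because $s\ge 0 > 2-n-s$. An entire harmonic function of polynomial growth $O(|y|^s)$ is a polynomial by Liouville, and homogeneity then makes it an $s$-homogeneous harmonic polynomial. This is the only step with content, and it is standard.

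\emph{Step 3: compute $d\phi$.} Differentiate to get
\[
d\phi = (2-n-2s)r^{-n-2s}\Phi\,\textstyle\sum_i y^i dy^i + r^{2-n-2s}d\Phi = r^{-n-2s}\big((2-n-2s)\Phi(y)\,y^\flat + |y|^2 d\Phi(y)\big).
\]
Set $\widehat\Phi(y) := (2-n-2s)\Phi(y)y^\flat + |y|^2 d\Phi(y)$. This is a $\Lambda^1\R^n$-valued polynomial, homogeneous of degree $s+1$, so in particular of degree $\le s+1$. By homogeneity, $\widehat\Phi(y) = r^{s+1}\widehat\Phi(y/|y|)$, which gives $d\phi = r^{1-n-s}\widehat\Phi(y/|y|)$. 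This completes the plan.
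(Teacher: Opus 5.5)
Your proof is correct and follows the paper's argument. The paper obtains $\phi = r^{2-n-2s}\Phi$ by noting directly that $\phi|_{S^{n-1}}$ is a degree-$s$ spherical harmonic (the polar-coordinate Laplacian gives eigenvalue $(2-n-s)(-s)=s(s+n-2)$) and extending it as an $s$-homogeneous harmonic polynomial, whereas you reach the same factorization through the Kelvin transform, removable singularities and Liouville, which is an equally valid justification. Your differentiation step producing $\widehat\Phi(y) = |y|^2 d\Phi(y) + (2-n-2s)\Phi(y)\,y^\flat$, homogeneous of degree $s+1$, is identical to the paper's.
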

\begin{proof}
Direct computation implies that $\phi|_{S^{n-1}}$ is a spherical harmonic of degree $s$. Thus, there's a $s$-homogeneous harmonic polynomial $\Phi$ so that $\Phi|_{S^{n-1}} = \phi|_{S^{n-1}}$. Homogeneity considerations give  $\phi(y) = r^{2-n-2s}\Phi(y)$. Now, differentiating this expression we find
\[
d\phi = r^{-n-2s}(r^2 d\Phi(y) + (2-n-2s)\Phi(y) y^\flat) : = r^{-n-2s} \widehat\Phi(y)
\]
where $\widehat \Phi(y)$ is a $\Lambda^1\R^n$-valued homogeneous polynomial of degree $s+1$.
\end{proof}
\begin{lemma}\label{lemm:model-problem-harmonic}
Consider a linear map $\Pscr : \Lambda^2 \R^n\to\Lambda^2\R^n$ with $\bangle{\Pscr\eta,\eta} = |\eta|^2$ for decomposable $2$-forms. Fix integers $k,m\geq 0$ and harmonic functions $\phi,\psi$ on $\R^n\setminus \{0\}$, with $\phi,\psi$ not identically vanishing. Assume that $\phi$ is $(2-n-k)$-homogeneous, $\psi$ is $(2-n-m)$-homogeneous, and $\Delta \Pscr(d\phi \wedge d\psi) = 0$. Then $k=m$ and $\phi=\lambda \psi$ for some $\lambda \neq 0$. 
\end{lemma}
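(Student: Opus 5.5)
The plan is to test the equation $\Delta \Pscr(d\phi\wedge d\psi)=0$ against $\eta := d\phi\wedge d\psi$ on the unit sphere, and to use the property $\bangle{\Pscr\eta,\eta}=|\eta|^2$ to force $\eta\equiv 0$. Since $\Pscr$ is a constant linear map, $\Delta\Pscr\eta$ means the componentwise Euclidean Laplacian of the coefficients of $\Pscr\eta$ in the standard basis. The hypothesis therefore says that every coefficient of $\Pscr\eta$ is a harmonic function on $\R^n\setminus\{0\}$.

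\textbf{Homogeneity.} By \Cref{lemm:deriv-harmonic-degree-up}, $d\phi = r^{-n-2k}\widehat\Phi(y)$ and $d\psi = r^{-n-2m}\widehat\Psi(y)$, where $\widehat\Phi,\widehat\Psi$ are polynomials of degrees $\le k+1$ and $\le m+1$. Hence
\[
\eta = r^{-2n-2k-2m}\,\Xi(y),
\]
where $\Xi$ is a $\Lambda^2$-valued polynomial of degree $\le k+m+2$. In particular $\eta$ is homogeneous of degree $d = 2-2n-k-m$. So each coefficient of $\Pscr\eta$ is a $d$-homogeneous harmonic function on $\R^n\setminus\{0\}$. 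Its restriction to $S^{n-1}$ is then an eigenfunction of the spherical Laplacian with eigenvalue $-d(d+n-2)$. Writing this as $\ell(\ell+n-2)$ with $\ell\ge 0$ forces $\ell = n+k+m$, the root with $d = 2-n-\ell$. Thus every coefficient of $\Pscr\eta|_{S^{n-1}}$ is a spherical harmonic of degree $n+k+m$.

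\textbf{Orthogonality.} On $S^{n-1}$ the coefficients of $\eta$ agree with those of the polynomial $\Xi$, which has degree $\le k+m+2 < n+k+m$ because $n\ge 3$. Spherical harmonics of degree $\ell$ are $L^2(S^{n-1})$-orthogonal to restrictions of all polynomials of degree $<\ell$. Therefore
\[
\int_{S^{n-1}}|\eta|^2 = \int_{S^{n-1}}\bangle{\Pscr\eta,\eta} = 0,
\]
where the first equality uses decomposability of $\eta$. By homogeneity this gives $d\phi\wedge d\psi\equiv 0$ on $\R^n\setminus\{0\}$. This orthogonality count is the crux of the argument. Note that no sign condition on $\Pscr$ is needed; the only input is $\bangle{\Pscr\eta,\eta}=|\eta|^2$, which holds precisely because $\eta$ is decomposable.

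\textbf{Linear dependence.} Since $\psi$ is nonzero and homogeneous of negative degree, it is nonconstant. Moreover $d\psi\neq 0$ on a dense open set, because $\psi$ is real analytic. Near a point of this set, $d\phi\wedge d\psi=0$ lets us write $\phi = g(\psi)$ locally. Harmonicity of both functions gives $0=\Delta\phi = g''(\psi)|d\psi|^2$, so $g$ is affine and $\phi = \lambda\psi + c$ locally. By analyticity and connectedness of $\R^n\setminus\{0\}$ (here $n\ge 3$), this identity holds globally. Letting $r\to\infty$, where both functions decay, gives $c=0$. Then $\lambda\neq 0$ because $\phi\not\equiv 0$. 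Finally, comparing the homogeneity degrees $2-n-k$ and $2-n-m$ yields $k=m$.
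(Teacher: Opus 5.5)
Your proposal is correct and follows essentially the same route as the paper. Both arguments rest on the same mismatch: homogeneity forces each coefficient to be a spherical harmonic of degree $n+k+m$, while \Cref{lemm:deriv-harmonic-degree-up} makes it a polynomial of degree $\le k+m+2$ on $S^{n-1}$; both then finish with the local $\phi=g(\psi)$ argument and unique continuation. The only cosmetic difference is that the paper first concludes $\Pscr(d\phi\wedge d\psi)\equiv 0$ and then applies $\bangle{\Pscr\eta,\eta}=|\eta|^2$ pointwise, whereas you pair $\Pscr\eta$ against $\eta$ on $S^{n-1}$ directly.
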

\begin{proof}
Define the $2$-form $\xi = \Pscr(d\phi \wedge d\psi)$ on $\R^n\setminus\{0\}$.  \Cref{lemm:deriv-harmonic-degree-up} applied to $\phi$ and $\psi$ gives
\[
\xi(y) = r^{-2n-2k-2m} \Xi(y)
\]
for $\deg \Xi \leq 2+m+k$ and in particular, $\xi|_{S^{n-1}}$ decomposes into a finite sum of $\Lambda^2\Pi$-valued spherical harmonics of degree $\leq 2+k+m$. On the other hand, since $\xi$ is $(2-2n-k-m)$-homogeneous and harmonic, $\xi|_{S^{n-1}}$ is a $\Lambda^2\Pi$-valued spherical harmonic of degree $n+k+m > 2+k+m$. This implies that $\xi\equiv 0$. 

Since $d\phi\wedge d\psi$ is decomposable at each point, we thus have $d\phi\wedge d\psi = 0$.  On an open set with $\psi\neq 0,d\psi \neq 0$ we thus have $\phi=P(\psi)$ but then $0 = \Delta \phi = P''(\psi)|D \psi|^2$, i.e.\ $P$ is affine. This holds on all of $\R^n\setminus\{0\}$ by unique continuation. Because $\phi,\psi\to0$ at infinity, we thus have $\phi=\lambda \psi$ and thus $k=m$. This completes the proof. 
\end{proof}

We now fix an end $E=E_\alpha$ and assume it has height $0$ over $\Pi$. Write $G(y)=(y,v(y))$ for the graphical parametrization of $E$ over $\Pi\setminus B$. Regularity at infinity implies that $g=G^*g_{\R^N}$ is asymptotically flat in the sense of \eqref{eq:g-AF-def}. Write also $\bar g$ for the Euclidean metric on $\Pi$. Since $v$ is $g$-harmonic and $M$ is non-flat, \Cref{lemm:harmonic-expansion} shows that there is an integer $m\geq 0$ and a $(2-n-m)$-homogeneous $\bar g$-harmonic function $v_m$ on $\Pi\setminus\{0\}$ that's not identically zero and satisfies
\[
|D^\ell (v-v_m)| = o(r^{2-n-m-\ell}). 
\]
On the other hand, for $\omega \in \Kcal_\Omega \subset \Hcal_0$, we may use $|\omega| = O(r^{1-n})$ and regularity at infinity to uniquely write $G^*\omega = df_\omega$ with $f_\omega \to0$ at infinity. 

\begin{lemma}\label{lemm:lambda-omega}
For $\omega\in\Kcal_\Omega$ there's a unique $\lambda_\omega \in \R$ so that
\[
f_\omega - \lambda_\omega v = o(r^{2-n-m}) 
\]
along $E$. Moreover, if $\lambda_\omega = 0$ then $\omega = 0$. 
\end{lemma}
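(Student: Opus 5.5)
The approach is to read off the leading asymptotic terms of $f_\omega$ and $v$ along $E$, use the equation $P_\Omega(B_\omega^\sharp)=0$ (the consequence of $\omega\in\Kcal_\Omega$ recorded after \Cref{lemm:rhs-L-u}) at leading order, and then apply the model rigidity statement \Cref{lemm:model-problem-harmonic}.

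\textbf{Step 1: leading terms.} Since $G^*\omega=df_\omega$ and $\omega$ is harmonic, $f_\omega$ is $g$-harmonic on $\Pi\setminus B$ and $f_\omega\to0$ at infinity. Suppose $f_\omega\not\equiv0$ on $E$. Then \Cref{lemm:harmonic-expansion} gives an integer $k\geq0$ and a nonzero $(2-n-k)$-homogeneous $\bar g$-harmonic $\phi$ on $\Pi\setminus\{0\}$ with
\[
|D^\ell(f_\omega-\phi)|=o(r^{2-n-k-\ell}).
\]
We have the analogous expansion of $v$ with leading term $v_m$.

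\textbf{Step 2: asymptotics of $B_\omega$.} From the graphical parametrization, regularity at infinity and \Cref{lemm:unit-normal-lim}, the second fundamental form satisfies $A=\pm D^2 v+O(r^{3-3n})$ in the $y$-coordinates. Likewise $\nabla^2 f_\omega=D^2f_\omega+(\text{Christoffel terms})$, and the Christoffel symbols of $g$ are $O(r^{1-2n})$, so these terms are of strictly lower order. By \Cref{lemm:rhs-L-u}, $B_\omega$ is the commutator of $\nabla^2 f_\omega$ and $A$. Inserting the expansions gives
\[
B_\omega=\pm[D^2\phi,D^2v_m]+o(r^{2-2n-k-m}),
\]
with the commutator read as a $2$-form on $\Pi$. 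For $\bar g$-harmonic $\phi,\psi$ one has the identity $\Delta(d\phi\wedge d\psi)=2\sum_i D_i d\phi\wedge D_i d\psi$, which equals this Hessian commutator up to a constant. Also $T_pM\to\Pi$ as $r\to\infty$ (\Cref{lemm:unit-normal-lim}). Define $\Pscr=\pi_{\Lambda^2\Pi}\circ P_\Omega|_{\Lambda^2\Pi}$. For decomposable $\eta\in\Lambda^2\Pi$, \Cref{lemm:P-decomp} gives $\langle\Pscr\eta,\eta\rangle=\langle P_\Omega\eta,\eta\rangle=|\eta|^2$. Now multiply $P_\Omega(B_\omega^\sharp)=0$ by $r^{2n+k+m-2}$ and let $r\to\infty$ along rays. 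This yields $\Delta\,\Pscr(d\phi\wedge dv_m)=0$ on $\Pi\setminus\{0\}$. Projecting onto $\Lambda^2\Pi$ suffices here, since the hypothesis of \Cref{lemm:model-problem-harmonic} only concerns $\Pscr$.

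\textbf{Step 3: conclusion.} \Cref{lemm:model-problem-harmonic} then gives $k=m$ and $\phi=\lambda v_m$ with $\lambda\neq0$. Setting $\lambda_\omega=\lambda$, we get $f_\omega-\lambda_\omega v=(f_\omega-\phi)-\lambda_\omega(v-v_m)=o(r^{2-n-m})$. If instead $f_\omega\equiv0$ on $E$, set $\lambda_\omega=0$, and the estimate is trivial. Uniqueness of $\lambda_\omega$ holds because $v=v_m+o(r^{2-n-m})$ with $v_m\not\equiv0$ homogeneous, so $v\neq o(r^{2-n-m})$.

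For the final claim, suppose $\lambda_\omega=0$. Step 3 shows that a nonzero leading term of $f_\omega$ forces $\lambda_\omega\neq0$, so $f_\omega\equiv0$ on $E$. Hence $\omega=0$ on the open set $E$. Locally $\omega=df$ with $f$ harmonic, so unique continuation for harmonic functions, together with connectedness of $M$, gives $\omega\equiv0$.

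\textbf{Main obstacle.} The delicate point is Step 2: making rigorous that all error terms in $A$, in the Christoffel corrections, in the tilt of $T_pM$ away from $\Pi$, and in $f_\omega-\phi$, $v-v_m$ are genuinely $o(r^{2-2n-k-m})$ after two derivatives. This requires the $o$-remainders in \Cref{lemm:harmonic-expansion} to hold with derivatives. I would first check that the metric errors, of size $O(r^{2-2n})$ relative to the leading terms, beat the $o(1)$ relative error of the expansions; this holds since $n\geq3$.
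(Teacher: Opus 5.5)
Your proposal is correct and follows the paper's proof essentially step for step: leading terms from \Cref{lemm:harmonic-expansion}, identifying $B_\omega$ at leading order with a multiple of $\Delta_{\bar g}(d\phi\wedge dv_m)$, passing $P_\Omega(B_\omega^\sharp)=0$ to $\Pscr$ on $\Lambda^2\Pi$, using homogeneity, then applying \Cref{lemm:model-problem-harmonic} (the paper additionally avoids comparing $A$ with $D^2v$ by using the exact identity $\nabla^2 x^N=\bangle{\nu,e_N}A$). Two slips need fixing: first, $[D^2\phi,D^2v_m]$ is $(-2n-k-m)$-homogeneous (you used the degree $2-2n-k-m$ of $d\phi\wedge dv_m$), so the error must be $o(r^{-2n-k-m})$ and the rescaling factor $r^{2n+k+m}$, since with your exponent the limit along rays is vacuous; second, error terms such as the one in $A$ must be stated relatively, e.g.\ $O(|D^2v|\,|Dv|^2)=O(r^{2-3n-3m})$ rather than $O(r^{3-3n})$, because the absolute bound is not $o(|D^2v_m|)=o(r^{-n-m})$ when $m\geq 2n-3$.
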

\begin{proof}
Fix $\omega$ and write $f=f_\omega$. We may clearly assume that $\omega \neq 0$. Unique continuation guarantees that $f$ does not identically vanish along $E$. \Cref{lemm:harmonic-expansion} shows that there is an integer $k\geq 0$ and a non-vanishing $(2-n-k)$-homogeneous $\bar g$-harmonic function $f_k$ on $\Pi\setminus\{0\}$ so that $|D^\ell (f-f_k)| = o(r^{2-n-k-\ell})$. 

Along $E$ we have $\nabla^2 x^N = \bangle{\nu,e_N} A$, so pulling back the expression for $B_\omega$ (cf.\ \eqref{eq:defn-B}) given in \Cref{lemm:rhs-L-u} we find
\[
\bangle{\nu,e_N} (G^*B_\omega)(X,Y) = \nabla^2_g f(\nabla_g^2v(X,\cdot)^\sharp,Y) - \nabla^2_g f(\nabla_g^2v(Y,\cdot)^\sharp,X) .
\]
Using the $\bar g$-harmonicity of $f_k$ and $v_m$ we can rewrite this as 
\[
-2 \bangle{\nu,e_N} G^*B_\omega = \Delta_{\bar g} (df_k \wedge dv_m) + o(r^{-2n-k-m}).
\]
Let $\Pscr = \iota^* \circ P_\Omega \circ \pi^*$ be the restriction of $P_\Omega$ to $\Lambda^2\Pi$. It's easy to see that $\bangle{\Pscr\eta,\eta} = |\eta|^2$ for $\eta$ decomposable. Since $P_\Omega((B_\omega)^\sharp) = 0$ by \Cref{lemm:rhs-L-u} we find that 
\[
\Pscr((G^*B_\omega)^\sharp) = o(r^{-2n-k-m}),
\]
since $dG = d\iota + O(r^{1-n})$. 

Combining the previous expression with regularity at infinity we find that
\[
\Delta_{\bar g} \Pscr (df_k\wedge dv_m) = o(r^{-2n-k-m}),
\]
and using homogeneity we conclude that $\Delta_{\bar g} \Pscr (df_k\wedge dv_m)=0$. 

We may thus apply \Cref{lemm:model-problem-harmonic} to conclude that $m=k$ and $f_k = \lambda v_m$. This proves the existence part of the statement. Uniqueness follows from the fact that $v_m$ is not everywhere vanishing and is $(2-n-m)$-homogeneous. 

Finally, for $\omega \neq 0$ we have $f_k \not\equiv 0$, so $\lambda\neq 0$. This completes the proof.
\end{proof}
We are now prepared to characterize elements of $\Kcal_\Omega$.
\begin{proof}[Proof of \Cref{prop:K-Omega}]
Uniqueness of $\lambda_\omega$ in \Cref{lemm:lambda-omega} implies that $\omega \mapsto \lambda_\omega$ is a linear map $\Kcal_\Omega\to\R$. The map is injective by the final statement in \Cref{lemm:lambda-omega}. Since $dx^N \in \Kcal_\Omega$ this completes the proof. 
\end{proof}

\section{Proof of \Cref{thm:main}}\label{sec:proof-main}

Take $n\geq 3$ (for $n=2$ the result is proven in \cite{LopezRos1989}). Assume that $M^n\subset \R^N$ is a complete, connected, embedded, minimal hypersurface with finite total curvature and $\Ind(M) = 1$. Recall that if $M$ has $k \leq 2$ ends then it must be a catenoid or a hyperplane by \cite{Schoen1983} (and a hyperplane is area-minimizing and thus stable).  As such, we assume for contradiction that $k\geq 3$. 

Recall that $\varphi$ is the bound state defined in \Cref{lemm:index-eig} and $\Hcal$ is a space of harmonic forms defined in \Cref{defi:Hcal}. Then we set
\[
\Tcal : \Hcal \to \Lambda^2 \R^N, \qquad \bangle{\Tcal(\omega),\Theta} = \Qcal_\infty(\bangle{\nu\wedge \omega^\sharp,\Theta},\varphi). 
\]
The existence of the limit in $\Qcal_\infty$ follows as in \Cref{cor:u-a-sum-nullity}. By \Cref{lemm:harm-forms} and $k\geq 3$ we find $\dim \Hcal \geq n+2$. As such, we can fix $\Vcal \subset \Hcal$ with $\dim \Vcal = n+1= N$ and 
\begin{equation}\label{eq:dxN-not-in-W}
dx^N \not \in \Vcal
\end{equation}

\begin{claim}
There is $\omega \in \Vcal\setminus\{0\}$ and an admissible pairing set $P_\Omega = \sum_{a=1}^r \Theta_a \otimes\Theta_a$ so that $P_\Omega (\Tcal(\omega)) = 0$. 
\end{claim}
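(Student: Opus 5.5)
We combine \Cref{prop:finding-balanced-form} with \Cref{prop:pairing-from-balanced}, applied to the image $W := \Tcal(\Vcal) \subset \Lambda^2\R^N$. Two cases arise, according to whether $\Tcal|_{\Vcal}$ is injective.

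\emph{Case 1: $\Tcal|_{\Vcal}$ is not injective.} Then there is $\omega \in \Vcal\setminus\{0\}$ with $\Tcal(\omega) = 0$. Any admissible pairing set works, since $P_\Omega(0) = 0$. For instance, take $\Omega$ with $P_\Omega \geq 0$ supplied by \Cref{prop:pairing-from-balanced} applied to any balanced form, such as $e_1\wedge e_2 + e_3\wedge e_4$ (here $\lambda_1 = \lambda_2$, so it is balanced). Alternatively, when $N=4$ take the self-dual example.

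\emph{Case 2: $\Tcal|_{\Vcal}$ is injective.} Then $W$ has $\dim W = \dim \Vcal = N \geq 4$. By \Cref{prop:finding-balanced-form} there is a non-zero balanced $\eta \in W$. Write $\eta = \Tcal(\omega)$ with $\omega \in \Vcal$; since $\eta \neq 0$ and $\Tcal$ is linear, $\omega \neq 0$. \Cref{prop:pairing-from-balanced} then produces $\Omega \in \Lambda^{N-4}\R^N$ with $P_\Omega \geq 0$ and $P_\Omega \eta = 0$. Rescaling an eigenbasis of $P_\Omega$ as in \Cref{defi:adm-pairing-set} gives an admissible pairing set with $P_\Omega = \sum_a \Theta_a\otimes\Theta_a$, and by construction $P_\Omega(\Tcal(\omega)) = 0$.

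Two points need checking.

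\textbf{Linearity of $\Tcal$.} The map $\omega \mapsto \bangle{\nu\wedge\omega^\sharp,\Theta}$ is linear. The limit $\Qcal_\infty(\cdot,\varphi)$ exists for every $\omega \in \Hcal$ by the argument of \Cref{cor:u-a-sum-nullity}, which only uses that $\omega \in \Hcal$ and not any particular pairing set. Hence $\Tcal$ is a well-defined linear map.

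\textbf{Identification of the $2$-form.} The pairing convention \eqref{eq:inner-prod-forms} identifies $\Lambda^2\R^N$ with its dual, so $\Tcal(\omega)$ is a genuine $2$-form, and $P_\Omega(\Tcal(\omega))$ makes sense.

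The substantive input is entirely \Cref{prop:finding-balanced-form}, and the only potential obstacle is the dimension count: we need $\dim \Tcal(\Vcal) \geq N$ in the injective case. This is exactly why $\Vcal$ was chosen with $\dim\Vcal = N$, using $\dim \Hcal \geq n+2$ from \Cref{lemm:harm-forms} and $k\geq3$.

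The condition $dx^N\notin\Vcal$ plays no role in the claim itself. It will be used afterwards, together with \Cref{cor:u-a-sum-nullity} and \Cref{prop:K-Omega}, to derive the contradiction. Indeed, $P_\Omega\Tcal(\omega)=0$ means exactly that $\Qcal_\infty(u^\omega_a,\varphi) = \bangle{\Tcal(\omega),\Theta_a} = 0$ for all $a$, since each $\Theta_a$ lies in the range of $P_\Omega$ and $P_\Omega$ is self-adjoint.
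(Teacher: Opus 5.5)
Your proposal is correct and follows the paper's proof: the same split according to whether $\Tcal|_{\Vcal}$ is injective, with \Cref{prop:finding-balanced-form} and then \Cref{prop:pairing-from-balanced} applied to $\Tcal(\Vcal)$ in the injective case. The only difference is cosmetic. In the non-injective case the paper simply takes $\Omega = 0$, so $P_\Omega = \Id \geq 0$, which is simpler than invoking \Cref{prop:pairing-from-balanced} on an auxiliary balanced form as you do.
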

\begin{proof}
Suppose first that there is $\omega \in \Vcal\setminus\{0\}$ with $\Tcal(\omega) =0$. Then we can simply take $\Omega = 0$ so $P_\Omega = \Id$. If not, then $\Tcal|_\Vcal$ is injective so $\Tcal(\Vcal) \subset \Lambda^2\R^N$ has dimension $N$. Then by \Cref{prop:finding-balanced-form} there is $\alpha = \Tcal(\omega) \in \Tcal(\Vcal)$ balanced. \Cref{prop:pairing-from-balanced} then gives $\Omega$ with $P_\Omega$ admissible and $P_\Omega(\alpha) = 0$. This proves the claim. 
\end{proof}
Fix $\Theta_a,\omega$ as in the claim. We have
\[
0 = \bangle{P_\Omega(\Tcal(\omega)),\Tcal(\omega)} = \sum_{a=1}^r \bangle{\Tcal(\omega),\Theta_a}^2
\]
so $\Qcal_\infty(u^\omega_a,\varphi) = 0$ for $a=1,\dots,r$. 

Thus \Cref{cor:u-a-sum-nullity} implies that $\omega \in \Hcal_0$ and $Lu^\omega_a = 0$ for $a=1,\dots,r$, i.e.\ $\omega \in \Kcal_\Omega$. \Cref{prop:K-Omega} then implies that $\omega = \lambda dx^N$ for some $\lambda \in \R$. This contradicts \eqref{eq:dxN-not-in-W}, completing the proof. 

\section{Generalizations of \Cref{thm:main}} \label{sec:gen}

In this section we discuss generalizations of \Cref{thm:main}. We exclusively discuss the case of ambient $\R^4$. The advantage of this is that we may \emph{a priori} fix $\Omega = 1 \in \R = \Lambda^0\R^4$, instead of coupling the choice of $\Omega$ to the map $\Tcal$ as in \Cref{sec:proof-main}. Note that
\[
P_1\eta = \eta + \star \eta = 2 P_+ \eta
\]
where $P_+ : \Lambda^2\R^4\to\Lambda^2\R^4$ takes $\eta$ to its self-dual part $\eta_+ \in \Lambda_+$. 

We are able to remove the assumption that $M$ is embedded as follows.

\begin{theorem}\label{thm:R4-immersion}
If $M^3\to\R^{4}$ is a complete, connected, two-sided minimal immersion with $\Ind(M) = 1$ then $M$ is a higher-dimensional catenoid. 
\end{theorem}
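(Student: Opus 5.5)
The plan is to reduce to the setting of \Cref{sec:proof-main}, with the self-dual pairing $\Omega = 1$ fixed in advance, and to isolate the one place where embeddedness was used: the ends being parallel.

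\textbf{Step 1: finite total curvature and regular ends.} Since $n+1 = 4 \leq 6$, \Cref{theo:finite-index-to-ftc} gives that $M$ has finite total curvature. The results of \Cref{app:ftc} then show that $M$ has finitely many ends $E_1,\dots,E_k$. Each end is regular at infinity and is an (embedded) graph over its own hyperplane $\Pi_\alpha$, with unit normal $e^{(\alpha)}$ to $\Pi_\alpha$. Two-sidedness gives a global unit normal $\nu$, so $u^\omega_a = \bangle{\nu\wedge\omega^\sharp,\Theta_a}$ makes sense, with $\Theta_1,\Theta_2,\Theta_3$ the self-dual basis from \eqref{eq:sd-forms-basis}. \Cref{lemm:sumQua-terms} is purely local, so it holds verbatim. \Cref{lemm:index-eig} and \Cref{lemm:harm-forms} (the $\Hzero$ part) only use regularity at infinity, so they hold as well.

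\textbf{Step 2: the nullity bound.} I claim $\dim\Kcal_1 \leq 1$ in general. The argument of \Cref{lemm:lambda-omega} is local to a single end. Apply it to $E_1$, translated to height $0$, with $v$ its graphing function over $\Pi_1$. This gives a linear injective map $\omega\mapsto\lambda_\omega$ from $\Kcal_1$ to $\R$. If the ends are not all parallel, then $d\bangle{x,e^{(1)}}\notin\Hzero$, and $\Kcal_1$ may even be zero.

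\textbf{Step 3: the case of parallel ends.} If all $\Pi_\alpha$ coincide, I rotate so that $\Pi_\alpha = \{x^4=0\}$. Then \Cref{cor:QinfZ} goes through unchanged, as noted in the remark after it. So do \Cref{defi:Hcal}, \Cref{lemm:harm-forms}, \Cref{prop:sum-Q-leq0} and \Cref{cor:u-a-sum-nullity}. I run \Cref{sec:proof-main} with $P_1 = 2P_+$ and $\Tcal_+ = P_+\circ\Tcal : \Hcal\to\Lambda_+$, which has rank at most $3$. Suppose $k\geq 3$. Then $\dim\Hcal\geq b_1+k+2\geq 5$, so $\ker\Tcal_+$ has dimension at least $2$ and contains some $\omega\notin\Span\{dx^4\}$. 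For this $\omega$, \Cref{cor:u-a-sum-nullity} gives $\omega\in\Kcal_1$, contradicting \Cref{prop:K-Omega}. Hence $k\leq 2$. Schoen's theorem \cite{Schoen1983} only needs the two ends to be regular, hence embedded, so $M$ is a catenoid.

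\textbf{Step 4: non-parallel ends (the main obstacle).} If the ends are not all parallel, the $Z$-mechanism is unavailable, and we must show directly that index one is impossible. Using only $\Hzero$, the argument of Step 3 gives $b_1+k-1-\dim\Kcal_1\leq 3$. This yields $b_1+k\leq 5$, which is not yet a contradiction.

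To close the gap, I would enlarge $\Hzero$ using harmonic $F$ asymptotic on $E_\alpha$ to $\bangle{q_\alpha,x}$ with $q_\alpha\in\Pi_\alpha$, constructed as in \Cref{lemm:harm-forms}. The limiting values of $u^\omega_a$ on $E_\alpha$ are then $\sigma_\alpha\bangle{e^{(\alpha)}\wedge q_\alpha,\Theta_a}$. I would subtract a combination of the translation Jacobi fields $\bangle{e,\nu}$ and the rotation Jacobi fields $\bangle{\Theta x,\nu}$ with matching limits, and compute $\Qcal_\infty$ of that combination as a boundary term at infinity, by the integration by parts in \Cref{cor:QinfZ}. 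The hard point is the sign of this Dirichlet-to-Neumann-type boundary term. If it can be shown to be nonnegative on a subspace of dimension at least $k-1$, then the count becomes $b_1+2k-2-1\leq 3$. Together with $k\geq 2$, which non-parallel ends force, this gives a contradiction. Failing this, I would instead try to show directly that a two-sided immersion of index one has parallel ends, by using the translation fields $\bangle{e,\nu}$ as extra test functions.
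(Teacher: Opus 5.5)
Your Steps 1--3 are fine and match the paper: with parallel ends, the argument of \Cref{thm:main} (with $\Omega=1$ fixed) goes through verbatim for immersions. The genuine gap is Step 4. There you give a conditional plan whose decisive input is left unproved: the sign of a Dirichlet-to-Neumann boundary term on a subspace of dimension $\geq k-1$. Your fallback is not carried out either. So the non-parallel case, which is the real content of \Cref{thm:R4-immersion}, is not established.

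The plan is also aimed the wrong way. A rotation field $\bangle{\Theta x,\nu}$ that stays bounded on $E_\alpha$ must have $\Theta\nu_\alpha=0$, and then it tends to $0$ there. So rotations cannot absorb nonzero limits. Moreover, choosing $q_\alpha\in\Pi_\alpha$ independently on each end is exactly what produces the uncontrolled sign.

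The missing idea is to couple the asymptotic data across the ends so that only translation Jacobi fields are needed.
\begin{itemize}
\item Let $\nu_\alpha$ be the limit of $\nu$ on $E_\alpha$, and let $V=\Span\{\nu_\alpha\}$ with $d=\dim V\geq 2$. The map $R_\alpha:\Pi_\alpha\to\Lambda_+$, $R_\alpha(q)=P_+(\nu_\alpha\wedge q)$, is an isomorphism.
\item For each linear $S:V\to\Lambda_+$, set $q_\alpha=R_\alpha^{-1}(S\nu_\alpha)$. Build, as in \Cref{lemm:harm-forms}, a harmonic $\omega$ with $\omega^\sharp-q_\alpha^\top=O(r^{-2})$ on each $E_\alpha$.
\item Because the $\Theta_a$ are self-dual, $u^\omega_a\to\bangle{\nu_\alpha\wedge q_\alpha,\Theta_a}=\bangle{S\nu_\alpha,\Theta_a}=\bangle{\nu_\alpha,S^*\Theta_a}$. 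This is exactly the limit of the translation field $\bangle{\nu,S^*\Theta_a}$. Hence $\tilde u^\omega_a=u^\omega_a-\bangle{\nu,S^*\Theta_a}\in\Bcal$.
\item Translation fields have gradient $O(r^{-3})$, hence zero flux at infinity. The proof of \Cref{prop:sum-Q-leq0} then yields $\sum_a\Qcal(\tilde u^\omega_a,\tilde u^\omega_a)=0$, with no sign to control.
\item This produces a space $\widehat\Hcal$ with $\dim\widehat\Hcal\geq b_1(M)+k-1+3d$. It contains every $dx^i$, obtained by taking $S=P_+(\,\cdot\,\wedge e_i)$.
\item Suppose $\Qcal(\tilde u^\omega_a,\varphi)=0$ for $a=1,2,3$. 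Then $Lu^\omega_a=0$, as in \Cref{cor:u-a-sum-nullity}. On one end, $\omega-d\bangle{q_\alpha,x}$ decays and still satisfies the nullity equations. Applying \Cref{sec:nullity} on that end, together with unique continuation, gives $\omega\in\Span\{dx^1,\dots,dx^4\}$.
\item Therefore $\dim\widehat\Hcal\leq 3+4$, i.e.\ $b_1(M)+k-1+3d\leq 7$. Since $d\geq 2$, this forces $k\leq 2$, and \cite{Schoen1983} concludes.
\end{itemize}
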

\begin{proof}[Sketch of the proof]
It's easy to see that the assumption that $M$ is embedded in \Cref{thm:main} can be relaxed to the case of an immersion with parallel ends. As such, it suffices to assume that $V = \Span\{ \nu_\alpha \}$ has $d = \dim V \geq 2$. 

We now describe the replacement for the dilation Jacobi field $Z$ in this setting. Define $R_\alpha : \Pi_\alpha \to\Lambda_+$ by $R_\alpha(q) = P_+(\nu_\alpha \wedge q)$. It's clear that $R_\alpha$ is a linear isomorphism. For $S : V\to\Lambda_+$ linear we set $q_\alpha = R_\alpha^{-1}(S\nu_\alpha)$. Arguing as in \Cref{lemm:harm-forms} implies that for each $S$ we may find a harmonic $\omega$ with $|\nabla^\ell(\omega^\sharp - q_\alpha^\top)| = O(r^{-2-\ell})$, yielding a space of forms $\widehat \Hcal$ with
\[
\dim\widehat\Hcal \geq b_1(M) + k -1 + 3 d. 
\]
Taking $\Theta_1,\Theta_2,\Theta_3$ as in \eqref{eq:sd-forms-basis} we let $u^\omega_a = \bangle{\nu \wedge \omega^\sharp,\Theta_a}$. We note that $u^\omega_a$ limits to 
\[
\bangle{\nu \wedge q_\alpha,\Theta_a} = \bangle{S\nu_\alpha,\Theta_a} = \bangle{\nu_\alpha,S^*\Theta_a} 
\]
which is the same as the limiting value of the translation Jacobi field $ \bangle{\nu,S^*\Theta_a}$. As such
\[
\tilde u^\omega_a : = u^\omega_a - \bangle{\nu,S^*\Theta_a} \in \Bcal. 
\]
The translation Jacobi fields have vanishing flux at infinity (in contrast with $Z$). The proof of \Cref{prop:sum-Q-leq0} may thus be modified to give
\[
\sum_{a=1}^3 \Qcal(\tilde u_a^\omega,\tilde u_a^\omega) = 0. 
\]

Suppose that $\Qcal(\tilde u_a^\omega,\varphi) =0$ for $a=1,2,3$. As in \Cref{cor:u-a-sum-nullity} we have $L\tilde u_a^\omega = 0$. Thus, $L u^\omega_a = 0$ for $a=1,2,3$ since $L\bangle{\nu,S^*\Theta_a} = 0$. On a fixed end $E_\alpha$, let $\widehat \omega = \omega - d \bangle{q_\alpha,x}$ which still satisfies $L u^{\widehat\omega}_a = 0$ but has $\widehat \omega$ decaying to $0$ at infinity along $E_\alpha$. The argument in \Cref{sec:nullity} gives $\widehat\omega = \lambda d \bangle{\nu_\alpha,x}$, i.e.\ $\omega\in\Span\{dx^i : i =1,\dots,4\}$. Putting this together, we find that $\dim\widehat\Hcal - 4 \leq 3$, since otherwise we can find $\omega \in \widehat\Hcal \setminus \Span\{dx^i : i =1,\dots,4\}$ with $\Qcal(\tilde u_a^\omega,\varphi) =0$, a contradiction. 

Thus we find $b_1(M) + k-1+3d \leq 7$ so $k \leq 2$. We may then finish the proof by appealing to  \cite{Schoen1983} . 
\end{proof}

The techniques used to prove \Cref{thm:main} can be applied to obtain improved estimates between the topology and index of minimal submanifolds (cf.\ \cite{Ros2006,Savo2010,ChodoshMaximo2016,ChodoshMaximo2023,Li2017,AmbrozioCarlottoSharp2018}). We indicate one such result here. It seems possible to extend this result to higher dimensions, but obtaining the strongest possible estimates appears to require considerable machinery from algebraic topology. 

\begin{theorem}
Suppose that $M^3\subset \R^{4}$ is a non-flat, complete, connected, embedded, minimal hypersurface with $\Ind(M) <\infty$. Then we have
\[
\frac{1}{3} (b_1(M) + k + 1) \leq  \Ind(M) ,
\]
for $k$ the number of ends of $M$. 
\end{theorem}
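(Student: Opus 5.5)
The plan is to run the argument of \Cref{sec:proof-main} with the fixed admissible pairing $\Omega = 1 \in \Lambda^0\R^4$, i.e.\ $P_1 = 2P_+ = \sum_{a=1}^3 \Theta_a\otimes\Theta_a$ with $\Theta_a$ as in \eqref{eq:sd-forms-basis}. The map $\Tcal$ is replaced by a map into $\R^{3\Ind(M)}$, and a dimension count replaces the balanced-form argument.

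\textbf{Step 1 (reduction to finite total curvature).} Since $n+1 = 4 \le 6$, finite index implies finite total curvature by \Cref{theo:finite-index-to-ftc}. Hence all of \Cref{sec:prelim,sec:jac,sec:harmonic-1-forms} applies: the ends are regular at infinity and, after a rotation, graphical over $\Pi=\{x^4=0\}$. Non-flatness gives $\int_M|\nabla x^4|^2>0$, and \Cref{lemm:harm-forms} gives $\dim\Hcal \ge b_1(M)+k+2$.

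\textbf{Step 2 (index-$I$ version of \Cref{lemm:index-eig}).} Let $I=\Ind(M)$. Using \cite[Proposition 2.5]{Li2017}, I would produce $\varphi_1,\dots,\varphi_I \in W^{1,2}(M)\cap C^\infty_{\rm loc}(M)\subset\Bcal$ that are $L^2$-orthonormal eigenfunctions with $L\varphi_j+\lambda_j\varphi_j=0$, $\lambda_j<0$, spanning a maximal negative space for $\Qcal$ on $\Bcal$. Then I would show the following: if $f\in\Bcal$ and $\Qcal(f,\varphi_j)=0$ for all $j$, then $\Qcal(f,f)\ge0$, with equality iff $Lf=0$. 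The proof is verbatim the previous one, with the modified test function $\hat h = h-\sum_j \lambda_j^{-1}\Qcal(h,\varphi_j)\varphi_j$, which uses $\Qcal(\varphi_i,\varphi_j)=\lambda_j\delta_{ij}$.

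As in \Cref{cor:u-a-sum-nullity}, $\Qcal_\infty(Z,\varphi_j)=0$. So for $\omega\in\Hcal$ the limits $\Qcal_\infty(u^\omega_a,\varphi_j)=\Qcal(\tilde u^\omega_a,\varphi_j)$ exist. Together with \Cref{prop:sum-Q-leq0}, which applies since $\Omega=1$ is an admissible pairing, this gives the following: if all these quantities vanish, then
\[
0\le \sum_{a=1}^3\Qcal(\tilde u^\omega_a,\tilde u^\omega_a) = -2|q|^2\int_M|\nabla x^4|^2,
\]
so $q=0$, $\omega\in\Hzero$ and $Lu^\omega_a=0$ for $a=1,2,3$. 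That is, $\omega\in\Kcal_1$, and by \Cref{prop:K-Omega} (whose proof does not use $\Ind(M)=1$) we get $\omega\in\Span\{dx^4\}$.

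\textbf{Step 3 (counting).} Define the linear map
\[
\Tcal:\Hcal\to\R^{3I},\qquad \omega\mapsto\bigl(\Qcal_\infty(u^\omega_a,\varphi_j)\bigr)_{a,j}.
\]
By Step 2, $\ker\Tcal\subset\Span\{dx^4\}$, so $\dim\Hcal\le 3I+1$. Combined with Step 1 this yields $b_1(M)+k+2\le 3I+1$, which is the claimed inequality.

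The main obstacle is Step 2. One must check that the negative space of $\Qcal$ on the completion $\Bcal$ (not just on $C^\infty_c$) has dimension exactly $I$ and is realized by $L^2$ eigenfunctions. One must also check that nothing in \Cref{sec:nullity} or \Cref{prop:sum-Q-leq0} secretly used $\Ind(M)=1$ rather than just finite total curvature, non-flatness and embeddedness. In particular, the boundary term $\int_{M\cap\partial B_R}\omega\wedge A\omega$ should still be $O(R^{-1})$ by \Cref{lemm:sff-decay} and the asymptotics of $\omega\in\Hcal$. I expect these checks to go through, since those arguments only use the decay estimates.
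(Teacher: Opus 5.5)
Your proposal is correct and is essentially the paper's proof. Your map $\omega\mapsto\bigl(\Qcal_\infty(u^\omega_a,\varphi_j)\bigr)_{a,j}$ is the paper's $F=(P_+\circ\Tcal_1,\dots,P_+\circ\Tcal_I)\colon\Hcal\to(\Lambda_+)^{\oplus I}$ written in the basis $\Theta_1,\Theta_2,\Theta_3$ of $\Lambda_+$, and the paper likewise concludes by combining $\dim\Hcal\le 3I+1$ with \Cref{lemm:harm-forms}. Your Step 2 spells out what the paper's sketch leaves implicit, namely the index-$I$ versions of \Cref{lemm:index-eig} and \Cref{cor:u-a-sum-nullity} via $\hat h=h-\sum_j\lambda_j^{-1}\Qcal(h,\varphi_j)\varphi_j$, and those checks do go through as you expect.
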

\begin{proof}[Sketch of the proof] Recall that $M$ has finite total curvature (cf.\ \Cref{theo:finite-index-to-ftc}). Let $\varphi_1,\dots,\varphi_I$ denote the bound-states (cf.\ \Cref{lemm:index-eig}). As in \Cref{sec:proof-main} we define for $i=1,\dots,I$ the map
\[
\Tcal_i : \Hcal \to \Lambda^2\R^4, \qquad \bangle{\Tcal_i(\omega),\Theta}  = \Qcal_\infty(\bangle{\nu\wedge \omega^\sharp,\Theta},\varphi_i).
\]
Let
\[
F = (P_+ \circ \Tcal_1,\dots,P_+\circ \Tcal_I) : \Hcal \to (\Lambda_+)^{\oplus I}
\]
and observe that
\[
\dim \ker F \geq \dim \Hcal - \dim (\Lambda_+)^{\oplus I} = \dim \Hcal - 3I.
\]
As such, we find that if $\dim \Hcal > 3I + 1$ then we obtain a contradiction as in the proof of \Cref{thm:main}. Combining this with \Cref{lemm:harm-forms} finishes the proof.
\end{proof}

\appendix

\section{Finite total curvature}\label[appendix]{app:ftc}
We recall that $M^n\to\R^{n+1}$ has finite total curvature if $\int_M |A|^n < \infty$. 

\begin{theorem}[\cite{FC85,Tys89}]\label{theo:ftc-to-finite-index}
A complete two-sided minimal immersion $M^n\to\R^{n+1}$ with finite total curvature has finite Morse index. 
\end{theorem}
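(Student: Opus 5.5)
The plan is the classical Fischer-Colbrie--Tysk argument: relate the index of $L = \Delta + |A|^2$ on $M$ to the index of a Schrödinger operator with a decaying (in fact integrable in the right sense) potential. The key observation is that finite total curvature forces $|A|$ to decay at infinity. In the embedded case this is \Cref{lemm:sff-decay}; in general, ends of finite-total-curvature immersions are regular at infinity by \Cref{thm:reg-infty}, which gives $|A| = O(r^{-n})$ on each end. In particular there is a compact set $K\subset M$ with $|A|^2 \le \varepsilon r^{-2}$ on $M\setminus K$ for any prescribed $\varepsilon>0$, where $r$ is the ambient distance.

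\textbf{Stability outside a compact set.} On each end, the induced metric is asymptotically flat, and minimal submanifolds of Euclidean space satisfy a Hardy-type inequality. Since $\Delta r^2 = 2n$ on $M$, integrating $\operatorname{div}(u^2 x^\top/r^2)$ gives
\[
\frac{(n-2)^2}{4}\int_M \frac{u^2}{r^2} \le \int_M |\nabla u|^2
\]
for $u\in C^\infty_c(M\setminus\{0\})$. For $n\ge3$, choosing $\varepsilon<(n-2)^2/4$ shows $\Qcal(u,u)\ge0$ for all $u$ supported in $M\setminus K$. So $M$ is stable outside a compact set.

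\textbf{Finite index.} This step is Fischer-Colbrie's lemma. Enlarge $K$ to a smooth compact domain $\Omega$. Any subspace $U\subset C^\infty_c(M)$ on which $\Qcal$ is negative definite can be pushed into a bounded region with the Dirichlet form controlled. Concretely, the quadratic form restricted to functions supported on $M\setminus \Omega$ is nonnegative. Hence the index is bounded by the number of negative eigenvalues of the Dirichlet problem for $L$ on a slightly larger domain $\Omega'$, plus a bounded correction: negative directions must have a nontrivial component in the finite-dimensional space of low Dirichlet eigenfunctions on $\Omega'$.

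The cleanest route is the following. Suppose $\dim U > N(\Omega')$, the number of nonpositive Dirichlet eigenvalues of $L$ on $\Omega'$. Take a cutoff $\chi$ equal to $1$ on $\Omega$ and supported in $\Omega'$, and write $u=\chi u + (1-\chi)u$. Expanding $\Qcal(u,u)$, the cross terms involve only $|\nabla \chi|^2u^2$ on the annulus $\Omega'\setminus\Omega$. These are absorbed using the strict positivity margin coming from the Hardy inequality with $\varepsilon$ small, since Hardy gives positivity with a coercive $\int u^2/r^2$ term. This forces a nonzero $u\in U$ with $\Qcal(u,u)\ge0$, a contradiction. Hence $\Ind(M)<\infty$.

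The dimension $n=2$ is handled separately via conformal invariance and Gauss map degree (the classical Fischer-Colbrie argument), but here $N\ge4$.

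The main obstacle is the absorption of the cutoff error in the splitting step. I would handle it by using the Hardy remainder $\tfrac{(n-2)^2}{4}-\varepsilon$ as a coercive weight and choosing $\Omega'$ with $|\nabla\chi|\lesssim r^{-1}$, for example with $\Omega'=B_{2R}$. Alternatively, one can cite the general principle that a Schrödinger operator which is nonnegative outside a compact set, with a positive-supersolution-type weight, has finite index.
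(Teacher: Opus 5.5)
The paper gives no proof of this statement; it is quoted from \cite{FC85} (the case $n=2$) and \cite{Tys89} ($n\ge 3$). For $n\ge 3$ your argument is correct in outline, but it takes a different route from Tysk's. As usually cited, Tysk obtains the quantitative bound $\Ind(M)\le C(n)\int_M|A|^n$ from a Cwikel--Lieb--Rozenblum (Li--Yau) type count of negative eigenvalues. That count uses only Euclidean-type Sobolev/heat kernel bounds valid on every minimal submanifold, so it works directly from the integral hypothesis and needs nothing about the ends.

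You instead first use the much deeper regularity at infinity (\Cref{thm:reg-infty}) to get properness and $|A|^2=o(r^{-2})$. After that the spectral part is elementary. Your divergence computation with $x^\top/r^2$ correctly gives the Hardy inequality with constant $\frac{(n-2)^2}{4}$, hence stability with a coercive margin outside a large ball. A localization then reduces the index to a Dirichlet eigenvalue count on a compact piece. This yields finiteness but no explicit bound in terms of $\int_M|A|^n$. It also cannot reach $n=2$, where the Hardy constant vanishes; that case rests entirely on Fischer-Colbrie's conformal/Gauss map argument, exactly as in the paper's citation.

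Two details of your localization step need fixing. Write $\chi=\cos\theta$, $\eta=\sin\theta$; the IMS identity is $\Qcal(u,u)=\Qcal(\chi u,\chi u)+\Qcal(\eta u,\eta u)-\int_M|\nabla\theta|^2\big((\chi u)^2+(\eta u)^2\big)$.

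First, the cutoff. To absorb $\int_M|\nabla\theta|^2(\eta u)^2$ pointwise by Hardy you need $|\nabla\theta|^2\le m\,r^{-2}$ with $m=\frac{(n-2)^2}{4}-\eps$. Since $\theta$ must rise by $\pi/2$ across $R\le r\le R'$, this forces $\sqrt m\,\log(R'/R)\ge \pi/2$. So the choice $\Omega'=B_{2R}$ fails for $3\le n\le 6$, and you should use a logarithmic cutoff with $R'/R$ large. Alternatively, move the part of the error where $\chi$ is bounded below into the compact problem; then $\theta$ only needs to rise by a small amount in the region handled by Hardy.

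Second, $\int_M|\nabla\theta|^2(\chi u)^2$ is not controlled by Hardy at all. The finite count must therefore be of the negative Dirichlet eigenvalues of $v\mapsto \Qcal(v,v)-\int_M|\nabla\theta|^2v^2$ on $M\cap B_{R'}$ (relatively compact by properness), not of $L$ itself.

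With these changes the argument closes. If $\dim U$ exceeds that count, some $u\in U\setminus\{0\}$ has $\chi u$ orthogonal to the corresponding eigenfunctions. Then both localized pieces are nonnegative and $\Qcal(u,u)\ge 0$, giving the contradiction you wanted.
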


\begin{theorem}[\cite{FC85,Tys89,CL21,CLMS24,Maz24}]\label{theo:finite-index-to-ftc}
For $3\leq n+1 \leq 6$ a complete two-sided minimal immersion $M^n\to\R^{n+1}$ of finite index has finite total curvature. 
\end{theorem}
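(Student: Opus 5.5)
This statement is cited from \cite{FC85,Tys89,CL21,CLMS24,Maz24}, so the plan is to assemble the argument from those sources rather than prove it from scratch. The argument splits according to the ambient dimension.

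\emph{Step 1: reduction to stability away from a compact set.} Finite index gives a compact $K\subset M$ such that $M\setminus K$ is stable; this is a standard result of Fischer-Colbrie \cite{FC85}. We then show that stability outside $K$ yields finite total curvature whenever $n+1\leq 6$.

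\emph{Step 2: $n=2$.} Fischer-Colbrie \cite{FC85} treats this case. Stability of $M\setminus K$ gives a positive Jacobi function $u$ there, so $u^2 g$ is a complete flat-type metric. Consequently $M$ is conformally a compact Riemann surface with finitely many punctures. The Gauss map then extends meromorphically, which forces $\int|A|^2<\infty$.

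\emph{Step 3: $n\geq 3$.} First we prove a curvature estimate on the stable region: $|A|(x)\,d(x,K)\leq C$. This comes from the stable Bernstein theorem in $\R^4$ \cite{CL21}, $\R^5$ \cite{CLMS24} and $\R^6$ \cite{Maz24}, combined with a blow-up argument. Any rescaled limit of points where the estimate fails is a complete two-sided stable minimal hypersurface, hence flat, which contradicts the rescaling normalization.

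Next we need Euclidean volume growth. We expect this to be the main obstacle, because the natural route (the monotonicity formula) controls area only from below. The upper bound on area growth has to be extracted from the stability-based proofs themselves, which yield volume control for stable pieces, roughly via bi-Ricci or $\mu$-bubble arguments. From here we follow Tysk \cite{Tys89}. Volume growth together with the curvature decay gives ends that are coverings of tangent cones at infinity. These cones are stable, and stable cones in these dimensions are flat. So each end is a multi-graph over a plane with decaying slope, and a finite cover of an end outside a ball is a graph. The end is then regular at infinity, so $|A|=O(r^{-n})$, and integrating gives $\int|A|^n<\infty$.

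Finally, the number of ends must be finite for the total curvature to be finite. We get this from finite index using Cao--Shen--Zhu type arguments: each additional nonflat-at-infinity end would contribute a nonparallel harmonic function, which contradicts finite index.
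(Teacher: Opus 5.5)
Your outline matches how the statement is assembled from the cited works; the paper gives no proof beyond the citation. Fischer-Colbrie handles $n=2$ and the reduction to stability outside a compact set, and for $n+1\in\{4,5,6\}$ the stable Bernstein papers supply the curvature and volume control needed to run Tysk's argument for hypersurfaces of Euclidean volume growth — consistent with the paper's remark that for $M^6\to\R^7$ the conclusion is known only under an $O(R^6)$ volume-growth assumption. Your last step is redundant, since Euclidean volume growth plus monotonicity already bounds the number of ends, and it is also misdescribed: the relevant harmonic-function argument (Cao--Shen--Zhu, Li--Wang) bounds the number of non-parabolic ends in terms of the index, not ``nonflat-at-infinity'' ends.
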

It's unknown whether or not \Cref{theo:finite-index-to-ftc} holds for $M^6\to\R^7$. We note that it does hold under the assumption of $O(R^6)$ volume growth by \cite{Tys89,SchoenSimon,Bellettini,FloritSimon}. 

A key property of minimal hypersurfaces of finite total curvature is that their ends are particularly simple. 

\begin{definition}\label{def:reg-infty}
Let $n+1\geq 4$. A complete minimal immersion $M^n \to \R^{n+1}$ is \emph{regular at infinity} if there's a compact set $K\subset M$ so that $M \setminus K = E_1\sqcup \dots\sqcup E_k$ with each $E_\alpha$ a graph of $v_\alpha$ over the exterior of a bounded region in some hyperplane $\Pi_\alpha$. If $y^1,\dots,y^n$ are orthogonal coordinates on $\Pi_\alpha$ we require that 
\[
D^\ell \left( v_\alpha(y) -  h_\alpha - a_\alpha |y|^{2-n} - \sum_{j=1}^n c_{\alpha,j} y_j |y|^{-n}\right) =  O(|y|^{-n-\ell})
\]
as $y\to\infty$, for all $\ell\geq 0$. 
\end{definition}

\begin{theorem}[\cite{Anderson,Schoen1983}]\label{thm:reg-infty}
For $n+1\geq 4$, if $M^n\to\R^{n+1}$ is a complete minimal immersion with finite total curvature then it's regular at infinity. 
\end{theorem}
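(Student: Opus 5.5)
The statement is \Cref{thm:reg-infty}: a complete minimal immersion $M^n\to\R^{n+1}$ with $n+1\geq 4$ and finite total curvature is regular at infinity. I would prove it in three steps: decay of curvature, graphical decomposition of the ends, and an asymptotic expansion of the graph functions.

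\textbf{Step 1: curvature decay.} Let $\eps$ be the threshold in the $\eps$-regularity theorem for minimal hypersurfaces: if $\int_{M\cap B_\rho(p)}|A|^n<\eps$, then $\sup_{B_{\rho/2}(p)}|A|\le C\rho^{-1}$. This is proved with Simons' inequality and the Michael--Simon Sobolev inequality via Moser iteration; note that $\int |A|^n$ is scale invariant. Finite total curvature makes the energy small on balls far out, so the theorem gives $|A|(x)\le o(1)|x|^{-1}$. Consequently the Gauss map $\nu$ oscillates by $o(1)$ on each annulus $B_{2R}\setminus B_R$.

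\textbf{Step 2: ends are graphs.} Extrinsic monotonicity together with the curvature bound shows that each component of $M\setminus B_R$ is, on every annulus, a union of sheets that are graphs with small gradient over hyperplanes. To fix the plane, consider the Gauss map. Since $|A|=o(r^{-1})$, each end has at most polynomial volume growth, and I would show that the Gauss map extends continuously to infinity. This should use the improved decay $\int_{E\setminus B_R}|A|^n\to0$ together with an iteration on dyadic annuli giving $|A|\le Cr^{-1-\delta}$. With that decay $\nu$ converges, so each end is a graph of bounded gradient over the exterior of a compact set in a hyperplane $\Pi_\alpha$.

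\textbf{Step 3: asymptotic expansion.} The graph function $v_\alpha$ satisfies the minimal surface equation, a divergence-form quasilinear equation whose linearization at infinity is the flat Laplacian with coefficient errors $O(|Dv|^2)$. I would first bootstrap to $|Dv|=O(r^{-\delta})$ and $|D^2v|=O(r^{-1-\delta})$. The expansion then comes from Kelvin transform or Green representation on $\R^n\setminus B$: a bounded harmonic function there equals $h+a|y|^{2-n}+\sum_j c_j y_j|y|^{-n}+O(|y|^{-n})$. Perturbation terms of size $O(r^{-2-2\delta})$ are absorbed iteratively until the remainder is $O(|y|^{-n})$, with derivative bounds obtained by scaling.

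The main obstacle is Step 2, specifically passing from $|A|=o(r^{-1})$ to the convergence of the Gauss map and single-sheetedness of each end. This is where $n\geq3$ enters. In Schoen's argument, the finite-total-curvature condition is upgraded to $|A|=O(r^{-n})$ by using that the coordinate functions are harmonic and by comparison with Green's function of the (approximately flat) end.
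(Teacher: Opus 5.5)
The paper does not reprove your Steps 1--2 or the $\ell=0$ expansion. It takes $v_\alpha-h_\alpha-a_\alpha|y|^{2-n}-\sum_j c_{\alpha,j}y_j|y|^{-n}=O(|y|^{-n})$ from \cite{Anderson,Schoen1983} (via \cite[Theorem 3.2]{DingZhang}) and supplies only the derivative upgrade. Interior estimates for the minimal surface equation on rescaled dyadic annuli give $D^\ell v=O(|y|^{2-n-\ell})$, hence $D^\ell\Delta v=O(|y|^{2-3n-\ell})$, and Poisson estimates for $\hat v$ bootstrap because $2-3n-\ell\le -n-2-\ell$. Your ``derivative bounds obtained by scaling'' is that argument.

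The problem lies in the part you chose to reprove, exactly where you flag it. After $\eps$-regularity you only know $|A|=o(r^{-1})$, with no rate. So $\nu$ moves by $o(1)$ across each dyadic annulus, but these increments need not be summable, and no plane $\Pi_\alpha$ is singled out. Writing $\int_{E\setminus B_R}|A|^n\to0$ adds nothing, since it is just the hypothesis. An iteration on dyadic annuli needs a differential inequality to drive it, and you never identify one. The fallback you attribute to Schoen is circular at this stage. Comparison with the Green's function requires the end to already be a bounded-slope graph over a fixed hyperplane; that is what makes the equation uniformly elliptic and the end ``approximately flat''. But that is exactly what Step 2 is supposed to produce.

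The missing input is Simons' identity combined with the refined Kato inequality $|\nabla A|^2\ge(1+\tfrac2n)|\nabla|A||^2$. Together they give $\Delta u+|A|^2u\ge0$ for $u=|A|^{(n-2)/n}$. This is where $n\ge3$ enters Step 2, alongside $\pi_1(S^{n-1})=0$ for single-sheetedness of the rescaled links.

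On any minimal submanifold $\Delta_M r^{-\beta}=\beta r^{-\beta-2}\big((\beta+2)|\nabla r|^2-n\big)\le-\beta(n-2-\beta)r^{-\beta-2}$. Since $r^2|A|^2\to0$, both $r^{-\beta}$ (with $\beta<n-2$) and $r^{-\gamma}$ (with $0<\gamma<\tfrac{n-2}{n}$) are strict supersolutions of $\Delta+|A|^2$ outside a large ball $B_R$. Compare $u$ with $ar^{-\beta}+\eta r^{-\gamma}$ on $M\cap(B_\rho\setminus B_R)$; the outer boundary is handled by $u=o(\rho^{-(n-2)/n})$. Letting $\rho\to\infty$ and then $\eta\to0$ gives $|A|=O(r^{-n+\eps})$. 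Hence $\nu$ converges along each end at rate $O(r^{1-n+\eps})$ and $\Pi_\alpha$ is determined. Given Euclidean volume growth, the same subsolution also drives a hole-filling iteration giving geometric decay of $\int_{M\setminus B_{2^j}}|A|^n$, which may be what you had in mind.

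This decay also repairs Step 3, which does not start as written. With only $|Dv|=O(r^{-\delta})$ and $|D^2v|=O(r^{-1-\delta})$, the error $D^2v(Dv,Dv)/(1+|Dv|^2)$ is $O(r^{-1-3\delta})$, not $O(r^{-2-2\delta})$. That is too weak for a convergent Green representation, and $v$ is not yet known to be bounded. With $|A|=O(r^{-n+\eps})$ you get $|Dv|=O(r^{1-n+\eps})$, $v-h=O(r^{2-n+\eps})$ and $\Delta v=O(r^{2-3n+3\eps})=o(r^{-n-2})$. After that, your Step 3 and the paper's bootstrap go through.

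A minor point: $\eps$-regularity on extrinsic balls presupposes properness, which is not assumed. Work on intrinsic balls instead, and recover properness from $\nabla^2_M\tfrac12|x|^2=g-\langle x,\nu\rangle A$.
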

\begin{proof}
Regularity at infinity with $\ell=0$ follows by combining \cite{Anderson,Schoen1983}, see \cite[Theorem 3.2]{DingZhang}. Using $v(y) - h = O(|y|^{2-n})$ we can apply interior estimates for the minimal surface equation (after rescaling dyadic annuli to unit scale) to conclude that $D^\ell v = O(|y|^{2-n-\ell})$. The minimal surface equation $\Delta v = \frac{D^2v(D v,D v)}{1+|D v|^2}$ gives $D^\ell\Delta v = O(|y|^{2-3n-\ell})$. Since the expansion $h_\alpha + a_\alpha |y|^{2-n} + \sum_{j=1}^n c_{\alpha,j} y_j |y|^{-n}$ is harmonic, we thus get $D^\ell \Delta \hat v = O(|y|^{2-3n-\ell})$ for $\hat v(y) = v(y) -  h - a |y|^{2-n} - \sum_{j=1}^n c_{j} y_j |y|^{-n}$. The assertion thus follows by bootstrapping estimates for the Poisson equation (and $2-3n-\ell \leq -n-2-\ell$). 
\end{proof}

\begin{remark}
When $n+1=3$ an analogous result holds \cite{Osserman:FTC} but when $M$ is not assumed to be embedded it can have higher order ends of ``Enneper type''  (essentially since $\R^2\setminus\{0\}$ is not simply connected). 
\end{remark}

\section{Harmonic functions on an asymptotically flat manifold} \label[appendix]{app:harm}

Consider a Riemannian metric $g$ on $\R^n \setminus B$ with $n\geq 3$, so that 
\begin{equation}\label{eq:g-AF-def}
| D^\ell(g-\bar g)| = O(r^{-2 - \ell}),
\end{equation}
where $\bar g$ is the Euclidean metric and $ D$ is the Euclidean connection. 

\begin{lemma}\label{lemm:harmonic-expansion}
Suppose that $f \in C^\infty_\textnormal{loc}(\R^n\setminus B)$ has $f\to 0$ at infinity and satisfies $\Delta_g f = 0$. Then either $f\equiv 0$ or else there's an integer $k \geq 0$ and a function $f_k$ on $\R^n\setminus\{0\}$ that's $(2-n-k)$-homogeneous, not everywhere vanishing, and $\bar g$-harmonic with
\[
|D^\ell(f-f_k)| = o(r^{2-n-k-\ell}). 
\] 
\end{lemma}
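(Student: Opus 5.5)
We will prove \Cref{lemm:harmonic-expansion} by comparing $f$ with the Euclidean multipole expansion. The plan has three steps: get a crude decay bound, improve it to a full expansion, and then extract the leading homogeneous term.

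\textbf{Step 1: crude decay.} Write $\Delta_g = \Delta_{\bar g} + \mathcal E$. Here $\mathcal E$ is a second-order operator with coefficients $O(r^{-2})$ and corresponding derivative decay, which follows from \eqref{eq:g-AF-def}. As in the proof of \Cref{lemm:harm-forms}, use barriers: for $r\ge R_0$ large, the function $r^{2-n-\delta}$ is a $g$-supersolution for small $\delta>0$, since $\Delta_{\bar g} r^{2-n-\delta} = -\delta(n-2+\delta)r^{-n-\delta}$ dominates the error $O(r^{-2})\cdot r^{-n-\delta}$ once $r$ is large. We have $f\to 0$ at infinity and $f$ is bounded on $\{r=R_0\}$. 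The maximum principle, applied on the annuli $R_0<r<R$ and then letting $R\to\infty$, gives $|f|\le C r^{2-n}$; the same comparison with $r^{2-n}$ itself works if the constant is adjusted, or one simply keeps $r^{2-n-\delta}$ with $\delta<0$ small. Rescaled interior Schauder estimates on dyadic annuli then give $|D^\ell f| = O(r^{2-n-\ell})$.

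\textbf{Step 2: expansion.} Extend $f$ smoothly by cutoff to all of $\R^n$. Then $\Delta_{\bar g} f = -\mathcal E f + (\text{compact support}) =: F$, with $|D^\ell F| = O(r^{-n-2-\ell})$, so $F$ decays faster than $f$ by two powers. Represent $f$ as the Newtonian potential of $F$; this is valid because $f\to 0$, so $f$ minus the potential is harmonic on $\R^n$ and tends to $0$, hence vanishes by Liouville. Expanding the kernel $|y-z|^{2-n}$ in multipoles for $|z|<|y|/2$ and estimating the remaining region directly yields
\[
f = \sum_{j=0}^{J} f_j + O(r^{2-n-J-1+\epsilon}),
\]
where each $f_j$ is a $(2-n-j)$-homogeneous Euclidean harmonic function. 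There is one subtlety. The moments $\int z^\beta F(z)$ converge only for $|\beta|<2$ given the initial decay, so the expansion must be bootstrapped. Subtract the terms obtained so far, note that $\mathcal E$ applied to them improves the decay of $F$ by two further powers, and iterate. Alternatively, it suffices to find the leading term rather than the full expansion, so one can stop at the first index where the coefficient is nonzero and the improvement is still available. Derivative bounds follow again from rescaled interior estimates.

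\textbf{Step 3: leading term.} If every $f_j$ vanishes, then $f = O(r^{-K})$ for all $K$. The claim is that then $f\equiv 0$. I expect this to be the main obstacle: it is a unique continuation at infinity statement. To prove it, I would perform a Kelvin transform or inversion $y\mapsto y/|y|^2$. This turns $g$ into a metric near the origin that is $C^{1,1}$-ish (it is $C^0$ with Lipschitz control coming from the $O(r^{-2})$ decay), and turns $f$ into a solution of an elliptic equation with Lipschitz leading coefficients vanishing to infinite order at $0$. Strong unique continuation (Garofalo--Lin / Aronszajn type results hold for Lipschitz coefficients) then gives $f\equiv 0$ near infinity, and hence everywhere by standard unique continuation. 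Otherwise, let $k$ be the least index with $f_k\not\equiv 0$; the expansion gives $|D^\ell(f-f_k)| = o(r^{2-n-k-\ell})$, which is the claim.
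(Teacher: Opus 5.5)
Your argument works once two local errors are fixed, and it takes a more hands-on route than the paper. The paper gets $|f|\le Cr^{2-n+\eps}$ from the barrier $r^{2-n+\eps}$ and derivative bounds from interior estimates. It then simply cites Bartnik's weighted-space expansion theorem for the leading term, with derivatives again from interior estimates. You replace that citation with an explicit Newtonian potential and multipole computation, bootstrapped two orders at a time. You also treat separately the case where every coefficient vanishes, using a Kelvin transform and strong unique continuation for Lipschitz leading coefficients. That case is genuinely needed for the dichotomy ``$f\equiv 0$ or a nonzero leading term'', and the paper's short proof does not discuss it explicitly. Your version shows exactly where $|D^\ell(g-\bar g)|=O(r^{-2-\ell})$ enters: each step gains two more convergent moments of $F$, and the Kelvin-transformed coefficients are Lipschitz. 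The price is importing a unique continuation theorem (Garofalo--Lin, or Aronszajn--Krzywicki--Szarski) in place of Bartnik.

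The first error is a sign in Step 1. Since $\Delta_{\bar g} r^p=p(p+n-2)r^{p-2}$, you get $\Delta_{\bar g} r^{2-n-\delta}=+\delta(n-2+\delta)r^{-n-\delta}$, so for $\delta>0$ this is a subsolution, not a supersolution. You also cannot use $r^{2-n}$ with an adjusted constant, because $\Delta_g r^{2-n}=O(r^{-n-2})$ has no sign. Your fallback $r^{2-n+|\delta|}$ is the correct barrier (it is the paper's). The resulting bounds $|f|\le Cr^{2-n+\eps}$ and $F=O(r^{-n-2+\eps})$ are enough for Step 2. If you want exactly $r^{2-n}$, use $r^{2-n}-r^{2-n-\delta}$ with $0<\delta<2$.

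The second error is in Step 2. The full expansion $f=\sum_{j\le J}f_j+O(r^{1-n-J+\eps})$ with Euclidean-harmonic homogeneous $f_j$ is false in general for $J\ge 2$. Once a nonzero $f_j$ has been subtracted, the new source term $\mathcal E f_j=O(r^{-n-j-2})$ does not improve. The higher-order terms of $f$ are then driven by $g-\bar g$, which has no expansion. Only your fallback is valid, and it suffices. Suppose all coefficients so far vanish, so $f=O(r^{2-n-j+\eps})$. Then $F=O(r^{-n-j-2+\eps})$, moments of order $\le j+1$ converge, and $f=f_j+f_{j+1}+O(r^{-n-j+\eps})$. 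Either $f_j\neq 0$ or $f_{j+1}\neq 0$ gives the claim, or you pass to $j+2$.

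One clarification on Step 3: use the Kelvin transform $w(x)=|x|^{2-n}f(x/|x|^2)$, not the bare inversion, which leaves a first-order coefficient of size $|x|^{-1}$. For $w$ the leading coefficients are $\delta^{ab}+O(|x|^2)$ with $O(|x|)$ first derivatives, the first-order coefficients are $O(|x|)$, and the zeroth-order coefficient is bounded. Also $w$ vanishes to infinite order at $0$, so it extends across the origin as a solution. Strong unique continuation then applies.
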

\begin{proof}
For $\eps \in (0,n-2)$ we have $\Delta_{\bar g} r^{2-n+\eps} = (2-n+\eps)\eps r^{-n+\eps} < 0$ and thus $\Delta_g  r^{2-n+\eps}  < 0$ at infinity. The maximum principle thus gives $|f| \leq Cr^{2-n+\eps}$ at infinity. Interior estimates gives $|D^\ell f| = O(r^{2-n+\eps-\ell})$. This lets us apply \cite[Theorem 1.17]{Bartnik} to deduce the assertion with $\ell=0$. The assertion for all $\ell$ follows via interior estimates. 
\end{proof}

\bibliography{bib}
\bibliographystyle{amsalpha}

\end{document}